\theoremstyle{plain}
\newtheorem{theoalph}{Theorem}
\newtheorem{thmalph}[theoalph]{Theorem}
\newtheorem{coralph}[theoalph]{Corollary}
\theoremstyle{definition}
\theoremstyle{remark}
\theoremstyle{plain}
\newtheorem{thmsec}{Theorem}[section]
\newtheorem{thm}[thmsec]{Theorem}
\newtheorem{pro}[thmsec]{Proposition}
\newtheorem{lem}[thmsec]{Lemma}
\newtheorem{conj}{Conjecture}
\newtheorem{ques}{Question}
\theoremstyle{definition}
\newtheorem{defin}[thmsec]{Definition}
\theoremstyle{remark}
\newtheorem{rem}[thmsec]{Remark}
\newtheorem{rems}[thmsec]{Remarks}
\newtheorem{eg}[thmsec]{Example}
\def\og{\leavevmode\raise.3ex\hbox{$\scriptscriptstyle\langle\!\langle$~}}
\def\fg{\leavevmode\raise.3ex\hbox{~$\!\scriptscriptstyle\,\rangle\!\rangle$}}
\numberwithin{equation}{section}       
\newcommand{\vide}{\emptyset}
\newcommand{\N}{\mathbb{N}}
\newcommand{\Z}{\mathbb{Z}}
\newcommand{\C}{\mathbb{C}}
\newcommand{\pp}{\mathbb{P}^{2}_{\mathbb{C}}}
\newcommand{\pd}{\mathbb{\check{P}}^{2}_{\mathbb{C}}}
\newcommand\X{\mathrm{X}}
\newcommand\Sing{\mathrm{Sing}}
\newcommand\Tang{\mathrm{Tang}}
\newcommand\IF{\mathrm{I}_{\mathcal{F}}}
\newcommand\IinvF{\mathrm{I}_{\mathcal{F}}^{\mathrm{inv}}}
\newcommand\ItrF{\mathrm{I}_{\mathcal{F}}^{\hspace{0.2mm}\mathrm{tr}}}
\newcommand\F{\mathcal{F}}
\newcommand\omegaoverline{{\mspace{2mu}\overline{\mspace{-1.4mu}\omega\mspace{-1.4mu}}\mspace{2mu}}}
\newcommand\thetaoverline{{\mspace{2mu}\overline{\mspace{-1.4mu}\theta\mspace{-1.4mu}}\mspace{2mu}}}
\newcommand\px{\frac{\partial}{\partial x}}
\newcommand\py{\frac{\partial}{\partial y}}
\newcommand\pz{\frac{\partial}{\partial z}}
\begin{document}
\selectlanguage{english}
\title[Geometry of certain foliations on the complex projective plane]{Geometry of certain foliations on the complex projective plane}

\date{\today}

\author{Samir \textsc{Bedrouni}}

\address{Facult\'e de Math\'ematiques, USTHB, BP $32$, El-Alia, $16111$ Bab-Ezzouar, Alger, Alg\'erie}
\email{sbedrouni@usthb.dz,\,sbedrouni@mat.uab.es}

\author{David \textsc{Mar\'{\i}n}}

\thanks{This work has been partially funded by the Ministry of Science, Innovation and Universities of Spain through the grant PGC2018-095998-B-I00 and by the Agency for Management of University and Research Grants of Catalonia through the grant 2017SGR1725.}

\address{Departament de Matem\`{a}tiques, Edifici Cc, Universitat Aut\`{o}noma de Barcelona, 08193 Cerdanyola del Vall\`{e}s (Barcelona), Spain. Centre de Recerca Matem\`{a}tica, Edifici Cc, Campus de Bellaterra, 08193 Cerdanyola del Vall\`{e}s (Barcelona), Spain}

\email{davidmp@mat.uab.es}

\keywords{foliation, singularity, inflection point, orbit, isotropy group}
\maketitle{}

\begin{abstract}
Let $d\geq2$ be an integer. The set $\mathbf{F}(d)$ of foliations of degree $d$ on the complex projective plane can be identified with a \textsc{Zariski}'s open set of a projective space of dimension $d^2+4d+2$ on which $\mathrm{Aut}(\mathbb P^2_{\mathbb C})$ acts. We~show that there are exactly two orbits $\mathcal{O}(\mathcal{F}_{1}^{d})$ and $\mathcal{O}(\mathcal{F}_{2}^{d})$ of minimal dimension $6$, necessarily closed in $\mathbf{F}(d)$. This generalizes known results in degrees $2$ and $3.$ We deduce that an orbit $\mathcal{O}(\mathcal{F})$ of an element $\mathcal{F}\in\mathbf{F}(d)$ of dimension $7$ is closed in $\mathbf{F}(d)$ if and only if $\mathcal{F}_{i}^{d}\not\in\overline{\mathcal{O}(\mathcal{F})}$ for $i=1,2.$ This allows us to show that in any degree $d\geq3$ there are closed orbits in $\mathbf F(d)$ other than the orbits $\mathcal{O}(\mathcal{F}_{1}^{d})$ and $\mathcal{O}(\mathcal{F}_{2}^{d}),$ unlike the situation in degree $2.$ On~the other hand, we introduce the notion of the basin of attraction $\mathbf{B}(\mathcal{F})$ of a foliation $\mathcal{F}\in\mathbf{F}(d)$ as the set of $\mathcal{G}\in\mathbf{F}(d)$ such that $\mathcal{F}\in\overline{\mathcal{O}(\mathcal{G})}.$ We show that the basin of attraction $\mathbf{B}(\mathcal{F}_{1}^{d})$, resp. $\mathbf{B}(\mathcal{F}_{2}^{d})$, contains a quasi-projective subvariety of $\mathbf{F}(d)$ of dimension greater than or equal to $\dim\mathbf{F}(d)-(d-1)$, resp.~$\dim \mathbf{F}(d)-(d-3)$. In particular, we obtain that the basin $\mathbf{B}(\F_{2}^{3})$ contains a non-empty \textsc{Zariski} open subset of $\mathbf{F}(3)$. This is an analog in degree $3$ of a result on foliations of degree~$2$ due to \textsc{Cerveau}, \textsc{D\'eserti}, \textsc{Garba Belko} and \textsc{Meziani}.

\noindent{\it 2010 Mathematics Subject Classification. --- 37F75, 32S65, 32M25, 32M05.}
\end{abstract}

\section*{Introduction}

\noindent The set  $\mathbf{F}(d)$ of holomorphic foliations of degree $d$ on  $\pp$ is identified with a \textsc{Zariski} open subset of the projective space  $\mathbb{P}_{\C}^{\hspace{0.2mm}d^2+4d+2}$. We are interested here in the action of the group  $\mathrm{Aut}(\pp)=\mathrm{PGL}_3(\mathbb{C})$ on $\mathbf{F}(d).$  We generalize to arbitrary degree some results known in small degrees~\cite{CDGBM10,AR16,BM19} on this action.

\noindent For~$\F\in\mathbf{F}(d)$, we will respectively denote by  $\mathcal{O}(\F)$ and $\mathrm{Iso}(\F)$  the orbit and the isotropy group of $\F$ under the action of~$\mathrm{Aut}(\pp),$~{\it i.e.}
\begin{align*}
\mathcal{O}(\F):=\{\varphi^*\F\in\mathbf{F}(d)\hspace{1mm}\vert\hspace{1mm}\varphi\in\mathrm{Aut}(\pp)\}
&& \text{and} &&
\mathrm{Iso}(\F):=\{\varphi\in\mathrm{Aut}(\pp)\hspace{1mm}\vert\hspace{1mm}\varphi^*\F=\F\}.
\end{align*}
$\mathcal{O}(\F)$ is a \textsc{Zariski} irreducible subset of  $\mathbf{F}(d)$ and $\mathrm{Iso}(\F)$ is an algebraic subgroup of  $\mathrm{Aut}(\pp).$

\noindent Following   \cite{MP13} we will say that a foliation of $\mathbf{F}(d)$ is convex if its leaves other than straight
lines have no inflection points. We will denote by $\mathbf{FC}(d)$ the subset of $\mathbf{F}(d)$ consisting of convex foliations, which is a \textsc{Zariski} closed subset of~$\mathbf{F}(d).$

\noindent According to  \cite[Proposition~2.2]{Bru15} every foliation of degree~$0$ or~$1$ is convex. For $d\geq2$, $\mathbf{FC}(d)$ is a proper~closed subset of $\mathbf{F}(d)$ and it contains the foliation  $\F_{1}^{d}$ defined in the affine chart $(x,y)$ by the  $1$-form (see \cite[page~75]{Bed17}) $$\omega_{1}^{d}=y^{d}\mathrm{d}x+x^{d}(x\mathrm{d}y-y\mathrm{d}x).$$

\noindent We know from~\cite[Proposition~2.3]{CDGBM10} that if  $\F$ is an element of  $\mathbf{F}(d)$ with $d\geq2,$ then the dimension of~$\mathcal{O}(\F)$ is at least $6,$ or equivalently, the dimension of~$\mathrm{Iso}(\F)$ is at most $2$. In addition these bounds are attained by the convex foliation $\F_{1}^{d}$ and the non convex foliation $\F_{2}^{d}$ defined by the $1$-form (see \cite{Bed17}) $$\omega_{2}^{d}=x^{d}\mathrm{d}x+y^{d}(x\mathrm{d}y-y\mathrm{d}x).$$

\noindent The main result of this paper is the following.
\begin{thmalph}\label{thmalph:dim6}
{\sl Let $d$ be an integer greater than or equal to $2$ and let $\F$ be an element of  $\mathbf{F}(d).$ Assume that
the isotropy group $\mathrm{Iso}(\F)$ of $\F$ has dimension $2$. Then~$\F$ is linearly conjugated to one of the two foliations
 $\F_{1}^{d}$ and $\F_{2}^{d}$ defined respectively by the $1$-forms
\begin{itemize}
\item[\texttt{1. }] $\omega_{1}^{d}=y^{d}\mathrm{d}x+x^{d}(x\mathrm{d}y-y\mathrm{d}x);$

\item[\texttt{2. }] $\omega_{2}^{d}=x^{d}\mathrm{d}x+y^{d}(x\mathrm{d}y-y\mathrm{d}x).$
\end{itemize}
In other words, $\mathcal{O}(\F_{1}^{d})$ and $\mathcal{O}(\F_{2}^{d})$ are the only orbits of dimension $6$.
They are closed in~$\mathbf{F}(d).$ Moreover we have
\begin{align*}
&
\mathrm{Iso}(\F_{1}^{d})=\left\{
\left(\frac{\alpha^{d-1}x}{1+\beta x},\frac{\alpha^{d}y}{1+\beta x}\right)
\hspace{1mm}\Big\vert\hspace{1mm}
\alpha\in\mathbb{C}^*,\hspace{1mm}\beta\in\mathbb{C}\right\},\\
&
\mathrm{Iso}(\F_{2}^{d})=\left\{
\left(\frac{\alpha^{d+1}x}{1+\beta x},\frac{\alpha^{d}y}{1+\beta x}\right)
\hspace{1mm}\Big\vert\hspace{1mm}
\alpha\in\mathbb{C}^*,\hspace{1mm}\beta\in\mathbb{C}
\right\};
\end{align*}
these two groups are not conjugated.}
\end{thmalph}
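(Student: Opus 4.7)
The plan is to reduce the statement to a problem at the Lie-algebra level. If $\mathrm{Iso}(\F)$ has dimension $2$, its Lie algebra $\mathfrak{g}:=\mathrm{Lie}(\mathrm{Iso}(\F)^{0})$ is a $2$-dimensional subalgebra of $\mathfrak{pgl}_{3}(\mathbb{C})=\mathfrak{sl}_{3}(\mathbb{C})$, realised as a $2$-dimensional space of projective vector fields on $\pp$ each of which infinitesimally preserves $\F$. The first step is to classify such $\mathfrak g$ up to $\mathrm{PGL}_{3}(\mathbb{C})$-conjugacy: by Lie's theorem any $2$-dimensional Lie algebra is either abelian or isomorphic to the affine algebra $\mathfrak{aff}_{1}=\langle X,Y\mid [X,Y]=Y\rangle$, and combining this with the root-space decomposition of $\mathfrak{sl}_{3}$ and the diagonalisability of semisimple elements, one obtains a short finite list of normal forms for $\mathfrak g$, each given by a pair of explicit affine vector fields of the form $P(x,y)\px+Q(x,y)\py$ with $\deg P,\deg Q\le 2$.

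For each normal form, the second step is to determine the degree-$d$ foliations $\F$ it stabilises. Writing $\omega$ for a homogeneous defining $1$-form of degree $d+1$ on $\mathbb{C}^{3}$, $\mathfrak g$-invariance of $\F$ translates into the two linear conditions $L_{X}\omega\wedge\omega=0$ for a pair of generators $X$ of $\mathfrak g$, i.e.\ a linear system on the finitely many coefficients of $\omega$. For most normal forms either no non-zero solution of degree $d+1$ exists, or the resulting foliation has isotropy of dimension strictly greater than $2$, which is forbidden by the bound of \cite[Proposition~2.3]{CDGBM10}. Discarding these, exactly two $\mathrm{PGL}_{3}(\mathbb{C})$-orbits of foliations survive; an explicit change of coordinates identifies them with $\mathcal{O}(\F_{1}^{d})$ and $\mathcal{O}(\F_{2}^{d})$. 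Being of minimal dimension in the action of the algebraic group $\mathrm{PGL}_{3}(\mathbb{C})$, these orbits are automatically closed in $\mathbf{F}(d)$.

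The last step is to make $\mathrm{Iso}(\F_{i}^{d})$ explicit: a $\varphi\in\mathrm{PGL}_{3}(\mathbb{C})$ belongs to $\mathrm{Iso}(\F_{i}^{d})$ iff $\varphi^{*}\omega_{i}^{d}\wedge\omega_{i}^{d}=0$, and imposing that $\varphi$ preserve the singular set $\Sing(\F_{i}^{d})$ and the invariant lines of $\F_{i}^{d}$ reduces $\varphi$ to a specific upper-triangular form, after which the remaining equations yield the announced two-parameter families. For the non-conjugacy of the two isotropy groups, their maximal tori are distinct $1$-dimensional subtori of a maximal torus of $\mathrm{PGL}_{3}(\mathbb{C})$ with characters $(d-1,d,0)$ and $(d+1,d,0)$, and a direct check shows these are not related by the Weyl group $S_{3}$ for any $d\ge 2$, so the ambient $2$-dimensional groups cannot be conjugate either. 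The main obstacle is the second step: since the size of the linear system grows with $d$, one must argue uniformly in $d$, for example via the $\mathfrak g$-weight decomposition of the space of homogeneous $1$-forms of degree $d+1$, to conclude that up to the normaliser of $\mathfrak g$ in $\mathrm{PGL}_{3}(\mathbb{C})$ at most one degree-$d$ foliation is $\mathfrak g$-invariant for each surviving normal form.
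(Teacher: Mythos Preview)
Your plan is essentially the paper's approach, but you gloss over two technical points that the paper isolates as lemmas and that carry most of the weight.

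First, the abelian case. You propose to handle both abelian and non-abelian $2$-dimensional $\mathfrak g\subset\mathfrak{sl}_3$, discarding the abelian ones afterwards because ``the resulting foliation has isotropy of dimension strictly greater than $2$''. That last claim is exactly the non-trivial content of \cite[Proposition~2.5]{CDGBM10}, which the paper invokes at the outset: if $\dim\mathfrak{iso}(\F)=2$ then $\mathfrak{iso}(\F)$ is \emph{affine}, i.e.\ admits generators $X,Y$ with $[X,Y]=Y$. Without citing or reproving this, you would have to treat, for instance, the Cartan subalgebra and the two nilpotent abelian models separately, and the ``isotropy $>2$'' conclusion is not automatic. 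The paper then classifies the affine subalgebras of $\chi(\pp)$ into five explicit normal forms $(\mathfrak a)$--$(\mathfrak e)$ (Lemma~\ref{lem:sous-algebres-affines}), each given by a pair of \emph{affine} vector fields in a suitable chart.

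Second, the $d$-uniform computation. You work with the quadratic condition $L_X\omega\wedge\omega=0$ and hope that a weight decomposition will make the analysis uniform in $d$. The paper instead proves (Lemma~\ref{lem:symetrie-affine}, via de~Rham--Saito division plus a degree count) that for an \emph{affine} symmetry $X$ one has $L_X\omega=\lambda\omega$ with $\lambda\in\mathbb C$ constant. Applied to the generators $X,Y$ of each normal form, this turns the invariance conditions into explicit first-order linear ODEs for the coefficients $A,B$ of $\omega=A\,\mathrm dx+B\,\mathrm dy$. These integrate in closed form, and only model~$(\mathfrak a)$ yields admissible polynomial solutions, giving precisely $\omega_1^d$ and $\omega_2^d$ after a coordinate change; models~$(\mathfrak b)$--$(\mathfrak e)$ force $A$ or $B$ to vanish or to be non-polynomial. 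This replaces your ``linear system growing with $d$'' by a one-line integration independent of $d$.

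For the explicit isotropy groups the paper does not solve $\varphi^*\omega_i^d\wedge\omega_i^d=0$ as you propose; it notes that $\omega_1^d/(x^2y^d)$ and $\omega_2^d/x^{d+2}$ are closed, writes down their rational first integrals, and reads off $\mathrm{Iso}(\F_i^d)$ from them. Your character/Weyl-group argument for the non-conjugacy of $\mathrm{Iso}(\F_1^d)$ and $\mathrm{Iso}(\F_2^d)$ is a valid way to justify the last claim, which the paper asserts without proof.
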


\noindent This theorem is a generalization in arbitrary degree of previous results on foliations of degrees $d=2$~(\cite[Proposition~2.7]{CDGBM10}) and $d=3$ (\cite[Theorem~10]{AR16}, \cite[Corollary~B]{BM19}).

\noindent We  also obtain the following corollary, which generalizes~\cite[Corollary~3.9]{BM19}:
\begin{coralph}\label{coralph:dim-o(F)=<7}
{\sl Let $d$ be an integer greater than or equal to $2$ and let $\F$ be an element of  $\mathbf{F}(d).$ If~$\dim\mathcal{O}(\F)\leq~\hspace{-0.8mm}7$,~then
$$\overline{\mathcal{O}(\F)}\subset\mathcal{O}(\F)\cup\mathcal{O}(\F_{1}^{d})\cup\mathcal{O}(\F_{2}^{d}).$$
In particular, when  $\dim\mathcal{O}(\F)=7,$ the orbit $\mathcal{O}(\F)$ of $\F$ is closed in  $\mathbf{F}(d)$
if and only if $\F_{i}^{d}\not\in\overline{\mathcal{O}(\F)}$ for $i=1,2.$
}
\end{coralph}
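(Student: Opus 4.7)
The plan is to derive this corollary as a direct formal consequence of Theorem~\ref{thmalph:dim6} combined with two standard ingredients: the lower bound $\dim\mathcal{O}(\G)\geq 6$ for every $\G\in\mathbf{F}(d)$ from \cite[Proposition~2.3]{CDGBM10}, and the classical fact that the orbit of an algebraic group action is locally closed in the ambient variety.

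First I would observe that since $\mathrm{Aut}(\pp)=\mathrm{PGL}_3(\C)$ is a connected linear algebraic group acting algebraically on the Zariski open set $\mathbf{F}(d)\subset\mathbb{P}_{\C}^{d^2+4d+2}$, the orbit $\mathcal{O}(\F)$ is a locally closed, irreducible, $\mathrm{Aut}(\pp)$-invariant subset of $\mathbf{F}(d)$. Consequently the boundary
$$\partial\mathcal{O}(\F):=\overline{\mathcal{O}(\F)}\setminus\mathcal{O}(\F)$$
is Zariski closed in $\overline{\mathcal{O}(\F)}$, $\mathrm{Aut}(\pp)$-invariant, and of strictly smaller dimension than $\mathcal{O}(\F)$; in particular it decomposes as a union of $\mathrm{Aut}(\pp)$-orbits, each of dimension $<\dim\mathcal{O}(\F)$.

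Next I would use the hypothesis $\dim\mathcal{O}(\F)\leq 7$ to conclude $\dim\partial\mathcal{O}(\F)\leq 6$, so every orbit $\mathcal{O}(\G)$ contained in $\partial\mathcal{O}(\F)$ satisfies $\dim\mathcal{O}(\G)\leq 6$. Combined with the lower bound $\dim\mathcal{O}(\G)\geq 6$ recalled above, this forces $\dim\mathcal{O}(\G)=6$, equivalently $\dim\mathrm{Iso}(\G)=2$. Theorem~\ref{thmalph:dim6} then identifies such a $\G$, up to linear conjugation, with $\F_{1}^{d}$ or $\F_{2}^{d}$; hence
$$\partial\mathcal{O}(\F)\subset\mathcal{O}(\F_{1}^{d})\cup\mathcal{O}(\F_{2}^{d}),$$
which is the first assertion.

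For the \emph{in particular} part, if $\dim\mathcal{O}(\F)=7$ then $\mathcal{O}(\F)$ is closed in $\mathbf{F}(d)$ precisely when $\partial\mathcal{O}(\F)=\emptyset$. Since $\overline{\mathcal{O}(\F)}$ is $\mathrm{Aut}(\pp)$-invariant, the condition $\F_{i}^{d}\in\overline{\mathcal{O}(\F)}$ is equivalent to $\mathcal{O}(\F_{i}^{d})\subset\overline{\mathcal{O}(\F)}$, and thanks to the previous inclusion this is equivalent to $\mathcal{O}(\F_{i}^{d})\subset\partial\mathcal{O}(\F)$; the stated equivalence follows. I do not anticipate a real obstacle here, as the whole argument is formal: all the substantive content is packed into Theorem~\ref{thmalph:dim6} and the orbit-dimension bound of \cite{CDGBM10}, the only delicate step being the appeal to the local closedness of algebraic orbits so that dimensions strictly drop when passing from $\mathcal{O}(\F)$ to $\partial\mathcal{O}(\F)$.
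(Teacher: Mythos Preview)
Your proof is correct and follows essentially the same approach as the paper, which states the corollary as an immediate consequence of Theorem~\ref{thmalph:dim6} together with the fact that $\dim\mathcal{O}(\F')<\dim\mathcal{O}(\F)$ whenever $\F$ degenerates onto $\F'$ (Remark~\ref{rems:degenerescence}(i)). You have simply made explicit the standard algebraic-geometry argument (local closedness of orbits) underlying that remark.
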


\noindent In the spirit of Corollary~\ref{coralph:dim-o(F)=<7} we can ask under what condition the closure in $\mathbf{F}(d)$ of the orbit $\mathcal{O}(\F)$ of an element  $\F$ of $\mathbf{F}(d)$ contains the foliations $\F_{1}^{d}$ and $\F_{2}^{d}$, a question that we have already asked and studied in degree $3$ in~\cite[Section~3]{BM19}. In Section~\S\ref{sec:feuilletages-dgnr-F1-F2}, we extend (Propositions~\ref{pro:cond-nécess-suff-dgnr-F1}~and~\ref{pro:cond-nécess-suff-dgnr-F2})  in arbitrary degree $d$ our previous results in \cite[Propositions~3.10,~3.12,~3.15,~3.17]{BM19} concerning this question.

\noindent For $\F\in\mathbf{F}(d),$  we call {\sl basin of attraction} of $\F$ the subset  $\mathbf{B}(\F)$ of $\mathbf{F}(d)$ defined by
\begin{align*}
\mathbf{B}(\F):=\{\mathcal{G}\in \mathbf{F}(d)\hspace{1mm}\vert\hspace{1mm} \F\in\overline{\mathcal{O}(\mathcal{G})}\}.
\end{align*}

\noindent It follows from  \cite[Theorem~2.15]{CDGBM10} that in degree $2$ the basin $\mathbf{B}(\F_{1}^{2})$ contains a quasi-projective subvariety of $\mathbf{F}(2)$ of dimension greater than or equal to $\dim\mathbf{F}(2)-1.$ In Section~\S\ref{sec:feuilletages-dgnr-F1-F2}, we establish an analogous result in any degree greater than $2.$
\begin{thmalph}[\rm{Theorem~\ref{thm:bassin-attraction-F1}}]\label{thmalph:bassin-attraction-F1}
{\sl For any integer $d\geq2$, the basin of attraction $\mathbf{B}(\F_{1}^{d})$ of $\F_{1}^{d}$ contains a quasi-projective subvariety of $\mathbf{F}(d)$ of dimension greater than or equal to $\dim\mathbf{F}(d)-(d-1).$
}
\end{thmalph}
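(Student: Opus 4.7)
The plan is to exhibit an explicit quasi-projective subvariety of $\mathbf{F}(d)$ of dimension at least $\dim\mathbf{F}(d)-(d-1)$ contained in $\mathbf{B}(\F_{1}^{d})$, via a degeneration argument based on a one-parameter subgroup of $\mathrm{Aut}(\pp)$ fixing $\F_{1}^{d}$.

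By Theorem~A, the one-parameter subgroup $\varphi_{t}\colon(x,y)\mapsto(t^{d-1}x,t^{d}y)$ lies in $\mathrm{Iso}(\F_{1}^{d})$; a direct computation yields $\varphi_{t}^{*}\omega_{1}^{d}=t^{d^{2}+d-1}\omega_{1}^{d}$. Assign to each monomial $1$-form the $\varphi_{t}$-weight $w(x^{i}y^{j}\mathrm{d}x):=(d-1)i+dj+(d-1)$ and $w(x^{i}y^{j}\mathrm{d}y):=(d-1)i+dj+d$; the three monomials of $\omega_{1}^{d}$ all share the common weight $w_{0}:=d^{2}+d-1$. Consider the family $\mathcal{V}_{0}$ of foliations defined by $1$-forms $\omega=\omega_{1}^{d}+\eta$, where $\eta$ ranges over sums of admissible monomials (i.e.\ those appearing in a valid degree-$d$ $1$-form, subject to the Euler condition on the top-degree part) of weight strictly greater than $w_{0}$. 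Every such $\omega$ satisfies $t^{-w_{0}}\varphi_{t}^{*}\omega\to\omega_{1}^{d}$ as $t\to 0$, so the associated foliation $\F$ lies in $\mathbf{B}(\F_{1}^{d})$. By $\mathrm{Aut}(\pp)$-invariance of the basin, the saturation $\mathcal{V}:=\mathrm{Aut}(\pp)\cdot\mathcal{V}_{0}$ is a constructible subset of $\mathbf{B}(\F_{1}^{d})$, whose largest locally closed component will provide the sought quasi-projective subvariety.

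The hard part is the dimension estimate $\dim\mathcal{V}\geq\dim\mathbf{F}(d)-(d-1)$. A direct enumeration of the admissible monomials of weight strictly greater than $w_{0}$ gives $\dim\mathcal{V}_{0}=d+1$ (namely the single low-degree contribution $y^{d}\mathrm{d}y$ together with the $d$ top-degree parameters $h_{0},\dots,h_{d-1}$ in $H=\sum h_{k}x^{k}y^{d-k}$). Since the explicit description of $\mathrm{Iso}(\F_{1}^{d})$ in Theorem~A implies that $\mathrm{Iso}(\F_{1}^{d})$ preserves $\mathcal{V}_{0}$, producing a $2$-dimensional generic fibre in the saturation map, a naive estimate gives only $\dim\mathcal{V}\geq d+7$, which falls short of the target $d^{2}+3d+3$. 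To close the gap one enlarges $\mathcal{V}_{0}$ by also allowing the weight-$w_{0}$ part of $\omega$ to vary in the full $3$-dimensional weight-$w_{0}$ subspace, using the fact that a generic weight-$w_{0}$ $1$-form can be brought into the orbit $\mathcal{O}(\F_{1}^{d})$ by an $\mathrm{Aut}(\pp)$-rectification. Equivalently, one may invoke the explicit necessary-and-sufficient criterion for membership in $\mathbf{B}(\F_{1}^{d})$ established earlier in the paper and verify that its defining algebraic conditions cut out a subvariety of codimension at most $d-1$ in $\mathbf{F}(d)$.
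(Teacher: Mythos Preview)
Your weight--filtration approach does produce foliations in $\mathbf{B}(\F_{1}^{d})$, but the dimension count is fatally short and cannot be repaired by the fixes you suggest. You correctly compute $\dim\mathcal{V}_{0}=d+1$, and after saturating by $\mathrm{Aut}(\pp)$ you obtain at most $d+7$, whereas the target is $d^{2}+3d+3$. Allowing the weight-$w_{0}$ part to vary adds only finitely many dimensions (three, as you note) and does not close a gap that is quadratic in $d$. The underlying issue is that the Bia{\l}ynicki--Birula cell of $\F_{1}^{d}$ for the chosen $\mathbb{C}^{*}$-action is genuinely small; most foliations degenerating onto $\F_{1}^{d}$ do so only after first applying a suitable automorphism \emph{depending on the foliation}, and your construction misses these.

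Your fallback sentence is also problematic: the paper does \emph{not} establish a necessary-and-sufficient criterion for membership in $\mathbf{B}(\F_{1}^{d})$. Proposition~\ref{pro:cond-n�cess-suff-dgnr-F1} gives a necessary condition (existence of a non-degenerate singularity with $\mathrm{BB}=4$) and a stronger sufficient condition (existence of a quasi-radial singularity of maximal order $d-1$), but these do not coincide---indeed Question~\ref{ques:1} asks precisely whether the gap between them can be characterized. What the paper actually does is use the \emph{sufficient} condition: it defines
\[
\Sigma_{1}(d):=\{\F\in\mathbf{F}(d)\mid \mathrm{QRad}(\F,d-1)\neq\emptyset\}\subset\mathbf{B}(\F_{1}^{d}),
\]
realizes $\Sigma_{1}(d)$ as the image under the first projection of an incidence variety $W_{1}(d)\subset\mathbf{F}(d)\times\pp\times\pd$ cut out by the conditions ``$s\in\ell$, $s\in\mathrm{Sing}(\F)$, $\mathrm{BB}(\F,s)=4$, $\Tang(\F,\ell,s)\geq d$'' together with open non-degeneracy conditions, and then counts equations: these are $4+(d-1)$ closed conditions in a space of dimension $\dim\mathbf{F}(d)+4$, so every irreducible component of $W_{1}(d)$ has dimension at least $\dim\mathbf{F}(d)-(d-1)$. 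The final step---passing from $\dim W_{1}(d)$ to $\dim\Sigma_{1}(d)$---requires showing that the projection has generically finite fibres over a suitable component, which the paper achieves via Proposition~\ref{pro:ouvert-U1} (bounding $\#\Lambda(\F,s)$ for $\F\in U_{1}(d)$) and an explicit family $\mathcal{G}^{d}(\gamma)$ witnessing that $U_{1}(d)\cap\Sigma_{1}(d)\neq\emptyset$. None of this is the verification you defer to; it is the entire content of the proof.
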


\noindent Notice that the non-convexity of  $\F_{2}^{d}$ and the fact that $\mathbf{FC}(d)$ is closed in $\mathbf{F}(d)$ imply that
\begin{align}\label{equa:Delta(d)<F(d)-FC(d)}
&\mathbf{B}(\F_{2}^{d})\subset\mathbf{F}(d)\setminus\mathbf{FC}(d).
\end{align}

\noindent In  degree $2$, according to \cite[Theorem~3]{CDGBM10}, inclusion~(\ref{equa:Delta(d)<F(d)-FC(d)}) is an equality:
\begin{align}\label{equa:Delta(2)=F(2)-FC(2)}
&\mathbf{B}(\F_{2}^{2})=\mathbf{F}(2)\setminus\mathbf{FC}(2).
\end{align}

\noindent It follows in particular from equality~(\ref{equa:Delta(2)=F(2)-FC(2)}) that the basin $\mathbf{B}(\F_{2}^{2})$ is a \textsc{Zariski} open subset of $\mathbf{F}(2).$ For $d\geq3$ we show the following result.
\begin{thmalph}[\rm{Theorem~\ref{thm:bassin-attraction-F2}}]\label{thmalph:bassin-attraction-F2}
{\sl In any degree  $d\geq3$, the basin of attraction $\mathbf{B}(\F_{2}^{d})$ of $\F_{2}^{d}$ contains a quasi-projective subvariety of $\mathbf{F}(d)$ of dimension greater than or equal to $\dim \mathbf{F}(d)-(d-3)$. In~particular, the basin $\mathbf{B}(\F_{2}^{3})$ contains a non-empty \textsc{Zariski} open subset of $\mathbf{F}(3).$}
\end{thmalph}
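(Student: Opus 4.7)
\noindent The plan is to exhibit an explicit quasi-projective subvariety of $\mathbf{B}(\F_2^d)$ of the required dimension by combining a one-parameter degeneration toward $\F_2^d$ with the $\mathrm{Aut}(\pp)$-action on $\mathbf{F}(d)$. The key tool is the one-parameter subgroup $\psi_t(x,y)=(t^{d+1}x,t^dy)$ of $\mathrm{Iso}(\F_2^d)$, obtained by taking $\alpha=t$ and $\beta=0$ in the description of the isotropy group. A direct computation yields $\psi_t^*(x^iy^jdx)=t^{(d+1)i+dj+(d+1)}x^iy^jdx$ and $\psi_t^*(x^iy^jdy)=t^{(d+1)i+dj+d}x^iy^jdy$, so $\omega_2^d$ is a $\psi_t^*$-eigenvector of weight $(d+1)^2$.

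\noindent Let $S$ be the linear subspace of $1$-forms of degree at most $d+1$ (subject to the top-degree constraint defining foliations of degree $d$) whose monomial expansion involves only monomials of $\psi_t^*$-weight strictly smaller than $(d+1)^2$. An elementary enumeration shows that no top-degree monomial has weight below $(d+1)^2$, and that among lower-degree monomials only $x^d\,dx$ reaches weight $(d+1)^2$; hence $S$ is spanned by the $x^iy^j\,dx$ with $i+j\leq d$, $(i,j)\neq(d,0)$, and the $x^iy^j\,dy$ with $i+j\leq d$, giving $\dim S=(d+1)(d+2)-1=d^2+3d+1$. Put
\[
V:=\bigl\{[\omega_2^d+\eta]\in\mathbf{F}(d)\,:\,\eta\in S\bigr\}.
\]
Since $\omega_2^d\notin S$, the map $\eta\mapsto[\omega_2^d+\eta]$ is injective, so $V$ is a quasi-projective subvariety of $\mathbf{F}(d)$ of dimension $d^2+3d+1$. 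For $\eta\in S$ decomposed as $\eta=\sum_w\eta_w$ with $w<(d+1)^2$,
\[
[\psi_t^*(\omega_2^d+\eta)]=\Bigl[\omega_2^d+\sum_w t^{w-(d+1)^2}\eta_w\Bigr]\xrightarrow[t\to\infty]{}[\omega_2^d],
\]
which proves $\F_2^d\in\overline{\mathcal{O}([\omega_2^d+\eta])}$, and hence $V\subset\mathbf{B}(\F_2^d)$.

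\noindent Set $W:=\mathrm{Aut}(\pp)\cdot V\subset\mathbf{B}(\F_2^d)$ and consider the surjective morphism $\mu:\mathrm{Aut}(\pp)\times V\to W$. Its generic fibre over $p\in W$ has dimension $\dim(V\cap\mathcal{O}(p))+\dim\mathrm{Iso}(p)$. Granting that for generic $\eta\in S$ the isotropy $\mathrm{Iso}([\omega_2^d+\eta])$ is trivial and that $V$ meets generic $\mathrm{Aut}(\pp)$-orbits transversally, the generic fibre dimension equals $\max(0,\dim V+8-\dim\mathbf{F}(d))=\max(0,7-d)$, whence
\[
\dim W=8+(d^2+3d+1)-\max(0,7-d)=\min\bigl(d^2+4d+2,\;d^2+3d+9\bigr)\geq d^2+3d+5.
\]
For $d=3$ one obtains $\dim W=23=\dim\mathbf{F}(3)$; since $W$ is a constructible set of full dimension in the irreducible variety $\mathbf{F}(3)$, it contains a non-empty Zariski open subset by Chevalley's theorem, proving the second assertion.

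\noindent The main obstacle is to justify the expected generic fibre dimension of $\mu$: one must check that for generic $\eta\in S$ the foliation $[\omega_2^d+\eta]$ has trivial isotropy and that $V$ meets generic $\mathrm{Aut}(\pp)$-orbits transversally. This reduces to a tangent-space analysis at $[\omega_2^d]$ using the weight decomposition above and the identification of the orbit tangent space with the image of the Lie algebra morphism $\mathfrak{pgl}_3\to T_{[\omega_2^d]}\mathbf{F}(d)$, $X\mapsto L_X\omega_2^d$. Explicit computation of $L_X\omega_2^d$ for the six representatives of $\mathfrak{pgl}_3/\mathfrak{iso}(\F_2^d)$ should identify the orbit directions and, after comparison with $S$, yield both the transversality and the genericity of trivial isotropy.
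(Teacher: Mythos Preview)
Your approach differs substantially from the paper's and contains a genuine gap that you yourself flag.

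\medskip

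\textbf{The gap.} Everything up through $V\subset\mathbf{B}(\F_2^d)$ and $\dim V=d^2+3d+1$ is correct. The problem is the passage from $V$ to $W=\mathrm{Aut}(\pp)\cdot V$. Your dimension formula for $W$ rests on two unproved claims: that generic points of $V$ have trivial isotropy, and that $V$ meets generic $\mathrm{Aut}(\pp)$-orbits transversally. Your proposed fix---a tangent-space computation at $[\omega_2^d]$---is not adequate: the point $[\omega_2^d]=\F_2^d$ itself has $2$-dimensional isotropy, so its orbit is $6$-dimensional, and the situation there is degenerate. You would need to carry out the analysis at a \emph{generic} point of $V$, where the orbit is $8$-dimensional, and there you have no explicit description of the orbit tangent space to compare with $S$. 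Transversality at a special point does not propagate to generic points without further argument.

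\medskip

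\textbf{How the paper avoids this.} The paper never computes $\mathrm{Aut}(\pp)\cdot V$. Instead it introduces the geometric locus
\[
\Sigma_2(d)=\{\F\in\mathbf{F}(d)\mid \mathrm{Flex}(\F,d-1)\neq\emptyset\}
\]
of foliations possessing an inflection point of maximal order $d-1$, and shows $\Sigma_2(d)\subset\mathbf{B}(\F_2^d)$ via a degeneration (Proposition~\ref{pro:cond-n�cess-suff-dgnr-F2}) essentially equivalent to yours. The dimension bound is then obtained from the incidence variety
\[
W_2(d)=\{(\F,p)\in\mathbf{F}(d)\times\pp\mid p\in\mathrm{Flex}(\F,d-1)\},
\]
which is cut out by $d-1$ equations in $\mathbf{F}(d)\times\pp$ (Lemma~\ref{lem:Tang-F-TpF-p}), so every component has dimension $\geq\dim\mathbf{F}(d)-(d-3)$. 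The projection $W_2(d)\to\Sigma_2(d)$ has finite fibres over the non-empty open set $U_2(d)$ of foliations with reduced transverse inflection divisor (Proposition~\ref{pro:ouvert-U2}, which uses \textsc{Jouanolou}'s foliation as a witness), and the fibre dimension theorem gives $\dim\Sigma_2(d)\geq\dim\mathbf{F}(d)-(d-3)$. The extra factor $\pp$ replaces your factor $\mathrm{Aut}(\pp)$: rather than moving the foliation by an $8$-dimensional group, the paper remembers only the $2$-dimensional datum of the inflection point, and the finiteness of such points is what makes the fibre argument go through cleanly.
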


\noindent Along the same order of ideas, we prove the following result.
\begin{thmalph}[\rm{Theorem~\ref{thm:intersection-bassin-attraction-F1-F2}}]\label{thmalph:intersection-bassin-attraction-F1-F2}
{\sl For any integer $d\geq2$, the intersection $\mathbf{B}(\F_{1}^{d})\cap\mathbf{B}(\F_{2}^{d})$ is non-empty and it contains a quasi-projective subvariety of $\mathbf{F}(d)$ of dimension equal to $\dim\mathbf{F}(d)-3d.$
}
\end{thmalph}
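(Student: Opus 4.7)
The strategy is to combine the criteria characterising the memberships $\F_i^d\in\overline{\mathcal{O}(\G)}$ that are established in Section~\ref{sec:feuilletages-dgnr-F1-F2} and then to exhibit an explicit quasi-projective family of foliations satisfying both degeneration conditions simultaneously. By the very definition of the basins, $\G\in\mathbf{B}(\F_1^d)\cap\mathbf{B}(\F_2^d)$ is equivalent to the existence of one-parameter subgroups $\{\varphi_t\},\{\psi_t\}\subset\mathrm{Aut}(\pp)$ such that $\varphi_t^*\G\to\F_1^d$ and $\psi_t^*\G\to\F_2^d$ as $t\to 0$, so the task reduces to producing such limiting families in sufficient abundance.

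First I would fix a common reference configuration. Using the action of $\mathrm{Aut}(\pp)$, the characteristic points and invariant lines attached to the two criteria of Section~\ref{sec:feuilletages-dgnr-F1-F2} can be normalized to standard positions in $\pp$; writing a general foliation $\G$ in the affine chart as $\omega=A(x,y)\,\mathrm{d}x+B(x,y)\,\mathrm{d}y$, with $A,B$ polynomials of degree at most $d+1$ whose top-degree parts are aligned along the radial direction, the two local normal forms translate into explicit vanishing conditions on certain coefficients of $A$ and $B$. These conditions are linear, so they cut out a linear, hence quasi-projective, subvariety $\Sigma\subset\mathbf{F}(d)$.

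Next I would verify that $\Sigma$ has the expected dimension $\dim\mathbf{F}(d)-3d$ and that every $\G\in\Sigma$ actually lies in both basins. The first is a direct count of coefficients that have to vanish, together with a linear-independence check performed on the monomial basis of $\omega$. For the second, I would exhibit, for a generic $\G\in\Sigma$, two explicit one-parameter subgroups of weighted homotheties --- naturally suggested by the structure of the isotropy groups $\mathrm{Iso}(\F_1^d)$ and $\mathrm{Iso}(\F_2^d)$ described in Theorem~\ref{thmalph:dim6} --- whose asymptotic effect on $\omega$ kills every monomial of $A,B$ except those appearing in $\omega_1^d$, respectively $\omega_2^d$. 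The non-emptiness of $\mathbf{B}(\F_1^d)\cap\mathbf{B}(\F_2^d)$ is obtained by writing down any single $\G\in\Sigma$ explicitly.

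The main obstacle lies in correctly identifying the codimension. The subvarieties constructed in Theorems~\ref{thmalph:bassin-attraction-F1} and~\ref{thmalph:bassin-attraction-F2} have codimensions $d-1$ and $d-3$ respectively, so a naive transversal intersection would have codimension at most $2d-4$; producing the sharper codimension $3d$ means that, once both local models are imposed on the same $\omega$, a number of extra constraints appear from the incompatibility of the two reference configurations under the $\mathrm{Aut}(\pp)$-action. Pinning down precisely which additional coefficients of $\omega$ must be set to zero, and checking that the remaining parameter space still yields genuine degenerations to both $\F_1^d$ and $\F_2^d$, is the delicate point of the proof.
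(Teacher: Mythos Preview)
Your proposal does not match the paper's argument, and the point you yourself flag as ``delicate'' is precisely the gap. You try to impose simultaneously a quasi-radial singularity of order $d-1$ (the criterion behind Theorem~\ref{thmalph:bassin-attraction-F1}) and a maximal-order inflection point (the criterion behind Theorem~\ref{thmalph:bassin-attraction-F2}) on a single normalized $\omega$, and then hope to count $3d$ independent linear conditions. But those two criteria give codimensions $d-1$ and $d-3$ separately, and you offer no mechanism producing the extra constraints needed to reach $3d$; ``incompatibility of the two reference configurations'' is not an argument, and in fact there is no reason to expect the intersection $\Sigma_1(d)\cap\Sigma_2(d)$ to have codimension exactly $3d$.

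The paper bypasses this difficulty entirely by passing through an \emph{intermediate} foliation. One exhibits a single homogeneous foliation $\mathcal{H}_{1,2}^d$ (defined by $(x^d+y^d)\mathrm{d}x+x^d\mathrm{d}y$) which degenerates onto both $\F_1^d$ and $\F_2^d$; this already shows the intersection of basins is non-empty. Then, by transitivity of orbit closure, $\mathbf{B}(\mathcal{H}_{1,2}^d)\subset\mathbf{B}(\F_1^d)\cap\mathbf{B}(\F_2^d)$, so it suffices to produce a large family inside $\mathbf{B}(\mathcal{H}_{1,2}^d)$. This is easy: take all foliations with $z=0$ invariant and top-degree part in the affine chart equal to a nonzero multiple of $(x^d+y^d)\mathrm{d}x+x^d\mathrm{d}y$; the single homothety $(x,y)\mapsto(x/\varepsilon,y/\varepsilon)$ kills every lower-degree term and yields $\mathcal{H}_{1,2}^d$ in the limit. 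The dimension count is then immediate: the free parameters are the coefficients of $\sum_{i=0}^{d-1}\omega_i$ together with the choice of the invariant line, giving $2\sum_{i=0}^{d-1}(i+1)+2=d^2+d+2=\dim\mathbf{F}(d)-3d$. The codimension $3d$ thus arises not from intersecting two local criteria but from fixing an invariant line and prescribing the entire degree-$d$ homogeneous part of $\omega$ --- a completely different set of conditions from the ones you were trying to combine.
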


\noindent By combining equality~(\ref{equa:Delta(2)=F(2)-FC(2)}) with the classification of C.~\textsc{Favre} and J.~V.~\textsc{Pereira} of convex foliations of degree two (\emph{cf.} \cite[Proposition~7.4]{FP15} or \cite[Theorem~A]{BM20}), we see that the only closed orbits in $\mathbf{F}(2)$ under the action of $\mathrm{Aut}(\pp)$ are those of $\F_{1}^{2}$ and $\F_{2}^{2}.$ We show in Section~\S\ref{sec:F0-lambda-degre-d} that in any degree $d\geq 3$ there are closed orbits in $\mathbf{F}(d)$ other than the orbits $\mathcal{O}(\F_{1}^{d})$ and $\mathcal{O}(\F_{2}^{d}),$ unlike the situation in degree $2.$ More precisely, we will consider a family of elements of $\mathbf{F}(d)$ which has been already studied in degree $d=2$ in \cite[page~189]{CDGBM10}, namely the family $(\F_{0}^{d}(\lambda))_{\lambda\in\C^*}$ of foliations of degree $d$ on $\pp$ defined by the $1$-form $$\omega_{0}^{d}(\lambda)=x\mathrm{d}y-\lambda y\mathrm{d}x+y^{d}\mathrm{d}y.$$ We will see that, for $\lambda=1,$  $\F_{0}^{d}(1)$ is linearly conjugated to the foliation  $\F_{1}^{d}$ and that, for any $\lambda\neq 1,$ $\dim\mathcal{O}\big(\F_{0}^{d}(\lambda)\big)=7.$ Moreover, we will show (Proposition~\ref{pro:Orbite-F0-lambda-degre-3-4-5-fermee}) that the orbit $\mathcal{O}\big(\mathcal{F}_{0}^{d}(\lambda)\big)$ is closed for any $d\geq3$ and $\lambda=-\frac{1}{d-1}$, resp.
for any $d\in\{3,4,5\}$ and any $\lambda\in\mathbb{C}^*,$ and we conjecture that it is so for any $d\geq6$ and any $\lambda\in\mathbb{C}^*$ (see Conjectures~\ref{conj:1} and~\ref{conj:2}).

\section{Some definitions and notations}
\bigskip

\subsection{Singularities and local invariants}

A degree $d$ holomorphic foliation $\F$ on $\pp$ is defined in homogeneous coordinates $[x:y:z]$ by a $1$-form $$\omega=a(x,y,z)\mathrm{d}x+b(x,y,z)\mathrm{d}y+c(x,y,z)\mathrm{d}z,$$ where $a,$ $b$ and $c$ are homogeneous polynomials of degree $d+1$ without common factor and satisfying the \textsc{Euler} condition $i_{\mathrm{R}}\omega=0$, where $\mathrm{R}=x\frac{\partial{}}{\partial{x}}+y\frac{\partial{}}{\partial{y}}+z\frac{\partial{}}{\partial{z}}$ denotes the radial vector field and $i_{\mathrm{R}}$ is the interior product by  $\mathrm{R}$.

\noindent Dually the foliation $\mathcal{F}$ can also be defined by a homogeneous vector field  $$\hspace{5mm}\mathrm{Z}=U(x,y,z)\frac{\partial{}}{\partial{x}}+V(x,y,z)\frac{\partial{}}{\partial{y}}+W(x,y,z)\frac{\partial{}}{\partial{z}},$$
the coefficients $U,V$ and $W$ are homogeneous polynomials of degree $d$ without common factor. The relation between $\mathrm{Z}$ and $\omega$ is given by
$$\hspace{-2.9cm}\omega=i_{\mathrm{R}}i_{\mathrm{Z}}(\mathrm{d}x\wedge\mathrm{d}y\wedge\mathrm{d}z).$$

\noindent The {\sl singular locus} $\mathrm{Sing}\mathcal{F}$ of $\mathcal{F}$ is the projectivization of the singular locus of~$\omega$ $$\mathrm{Sing}\,\omega=\{(x,y,z)\in\mathbb{C}^3\,\vert \, a(x,y,z)=b(x,y,z)=c(x,y,z)=0\}.$$

\noindent Let $\mathcal{C}\subset\pp$ be an algebraic curve with homogeneous equation $F(x,y,z)=0.$ We say that $\mathcal{C}$ is an \textsl{invariant curve} by $\F$ if $\mathcal{C}\smallsetminus\Sing\F$ is a union of (ordinary) leaves of the regular foliation $\F|_{\pp\smallsetminus\Sing\F}$. In algebraic terms, this is equivalent to require that the $2$-form  $\omega\wedge\mathrm{d}F$ is divisible by $F$, {\it i.e.} it vanishes along each irreducible component of $\mathcal{C}.$

\noindent Let $p$ be an arbitrary  point of $\mathcal{C}.$ When each irreducible component of $\mathcal{C}$ passing through $p$ is not $\F$-invariant, we define the \textsl{tangency order} $\Tang(\F,\mathcal{C},p)$ of $\F$ with $\mathcal{C}$ at $p$ as follows. We fix a local chart $(\mathrm{u},\mathrm{v})$ such that $p=(0,0)$; let $f(\mathrm{u},\mathrm{v})=0$ be a reduced local equation of  $\mathcal{C}$ in a neighborhood of $p$  and let $\X$ be a vector field defining the germ of  $\F$ at $p.$ We denote by $\X(f)$ the \textsc{Lie} derivative of $f$ along $\X$ and by $\langle f,\X(f)\rangle$ the ideal of  $\C\{\mathrm{u},\mathrm{v}\}$ generated by $f$ and $\X(f)$. Then $$\Tang(\F,\mathcal{C},p)=\dim_\mathbb{C}\frac{\C\{\mathrm{u},\mathrm{v}\}}{\langle f,\X(f)\rangle}.$$ Notice that $\Tang(\F,\mathcal{C},p)$ coincides with the intersection multiplicity $(\mathcal{C}.\mathcal{C}')_p$ at $p$ of the two algebraic curves $\mathcal{C}=\{F=0\}$ and $\mathcal{C}'=\{\mathrm{Z}(F)=0\}.$ Moreover, $\Tang(\F,\mathcal{C},p)<+\infty$ by the non-invariance of the irreducible components of~$\mathcal{C}$ passing through $p.$ By~convention, we put $\Tang(\F,\mathcal{C},p)=+\infty$ if there is at least one irreducible component of $\mathcal{C}$ invariant by $\F$ and passing through $p.$
\smallskip

\noindent Let us recall some local notions attached to the pair $(\mathcal{F},s)$, where  $s\in\Sing\mathcal{F}$. The germ of $\F$ at $s$ is defined, up to multiplication by a unity in the local ring  $\mathcal{O}_s$ at $s$, by a vector field~$\mathrm{X}=A(\mathrm{u},\mathrm{v})\frac{\partial{}}{\partial{\mathrm{u}}}+B(\mathrm{u},\mathrm{v})\frac{\partial{}}{\partial{\mathrm{v}}}$. The~{\sl algebraic multiplicity} $\nu(\mathcal{F},s)$ of $\mathcal{F}$ at $s$ is given by $$\nu(\mathcal{F},s)=\min\{\nu(A,s),\nu(B,s)\},$$ where $\nu(g,s)$ denotes the algebraic multiplicity of the function $g$ at $s.$ Let us denote by $\mathfrak{L}_s(\F)$ the family of straight lines through $s$ which are not invariant by $\F.$ For any line $\ell$ of~$\mathfrak{L}_s(\F),$ we have the inequalities $1\leq\Tang(\F,\ell_s,s)\leq d$. This allows us to associate to the pair~$(\mathcal{F},s)$ the following (invariant) integers
\begin{align*}
&\hspace{0.8cm}\tau(\mathcal{F},s)=\min\{\Tang(\F,\ell,s)\hspace{1mm}\vert\hspace{1mm}\ell\in\mathfrak{L}_s(\F)\},
&&\hspace{0.5cm}
\kappa(\mathcal{F},s)=\max\{\Tang(\F,\ell,s)\hspace{1mm}\vert\hspace{1mm}\ell\in\mathfrak{L}_s(\F)\}.
\end{align*}
The invariant $\tau(\mathcal{F},s)$ represents the tangency order of $\mathcal{F}$ with a generic line passing through $s.$ It is easy to see that
$$\tau(\mathcal{F},s)=\min\{k\geq 1\hspace{1mm}\vert\hspace{1mm}\det(J^{k}_{s}\,\mathrm{X},\mathrm{R}_{s})\not\equiv0\}\geq\nu(\mathcal{F},s),$$ where $J^{k}_{s}\,\mathrm{X}$ denotes the $k$-jet of $\mathrm{X}$ at $s$ and $\mathrm{R}_{s}$ is the radial vector field centered at $s.$ The \textsl{\textsc{Milnor} number} of $\F$ at $s$ is the integer $$\mu(\mathcal{F},s)=\dim_\mathbb{C}\mathcal{O}_s/\langle A,B\rangle,$$ where $\langle A,B\rangle$ denotes the ideal of $\mathcal{O}_s$ generated by $A$ and $B$.
\smallskip

\noindent The singularity $s$ is called {\sl radial of order} $n-1$ if $\nu(\mathcal{F},s)=1$ and $\tau(\mathcal{F},s)=n.$
\smallskip

\noindent The singularity $s$ is called {\sl non-degenerate} if $\mu(\F,s)=1,$ or equivalently if the Jacobian matrix of $\mathrm{X}$ at $s$, denoted by $\mathrm{Jac}\hspace{0.1mm}\mathrm{X}(s),$ possesses two nonzero eigenvalues $\lambda,\mu.$ In this case, the quantity
\begin{align*}
\mathrm{BB}(\F,s)=\frac{\mathrm{tr}^{2}(\mathrm{Jac}\hspace{0.1mm}\mathrm{X}(s))}{\det(\mathrm{Jac}\hspace{0.1mm}\mathrm{X}(s))}
=\frac{\lambda}{\mu}+\frac{\mu}{\lambda}+2
\end{align*}
is called the {\sl \textsc{Baum-Bott} index} of $\F$ at $s,$ see \cite{BB72}.
\smallskip

\noindent We will say that the singularity $s$ is {\sl quasi-radial of order} $n-1$ if $\mu(\F,s)=1,$ $\mathrm{BB}(\mathcal{F},s)=4$ and $\kappa(\mathcal{F},s)=n.$ In the sequel we will denote by
 $\mathrm{QRad}(\F,n-1)$ the set of quasi-radial singularities of  $\F$ of order $n-1$ and by  $\widehat{\mathrm{QRad}}(\F,n-1)$ the subset of  $\Sing(\F)\times\mathfrak{L}_s(\F)$ defined by
\begin{align*}
\widehat{\mathrm{QRad}}(\F,n-1):=\Big\{(s,\ell)\in\Sing(\F)\times\mathfrak{L}_s(\F)\hspace{1mm}\vert\hspace{1mm} \mu(\F,s)=1,\hspace{1mm}\mathrm{BB}(\F,s)=4,\hspace{1mm} \Tang(\F,\ell,s)=n\Big\}.
\end{align*}

\begin{rem}
Every radial singularity of order $n-1$ of a foliation $\F$ of degree $d\geq2$ on $\pp$ is quasi-radial of order $n-1.$ The converse is false: for instance, for the foliation defined in the affine chart $z=1$ by the $1$-form  $(x+y)\mathrm{d}y-y\mathrm{d}x+(x^n+y^d)\mathrm{d}x,$ with $n\in\{2,3,\ldots,d\},$ the point $[0:0:1]$ is a quasi-radial singularity of order $n-1,$ but it is not radial.
\end{rem}

\subsection{Inflection points}

Let us consider a foliation $\F$ of degree $d$ on $\pp$ and let $p$ be a regular~point~of~$\F$. Let us denote by  $\mathrm{T}^{\mathbb{P}}_{p}\F$ the tangent line to the leaf of $\F$ passing through $p$; it is the straight line of $\pp$ passing through $p$ with direction $\mathrm{T}_{\hspace{-0.4mm}p}\F.$
If $k\in\{2,\ldots,d\},$ we will say that $p$ is a {\sl (transverse) inflection point of order}~$k-1$ of $\F$ if $\Tang(\F,\mathrm{T}^{\mathbb{P}}_{p}\F,p)=k,$ in which case the line  $\mathrm{T}^{\mathbb{P}}_{p}\F$ is not invariant by $\F.$ When $\mathrm{T}^{\mathbb{P}}_{p}\F$ is $\F$-invariant, the point $p$ will be called  a {\sl trivial inflection point} of $\F$. If we denote by $\mathrm{Inv}(\F)$ the set of invariant lines
of $\F,$ then the set of trivial inflection points of  $\F$ is precisely $\mathrm{Inv}(\F)\setminus\Sing(\F).$
In the sequel, we will denote by  $\mathrm{Flex}(\F)$ the set of inflection points of  $\F$ and by $\mathrm{Flex}(\F,k-1)$ the subset of
 $\mathrm{Flex}(\F)$ consisting of transverse inflection points of $\F$ of order $k-1,$ {\it i.e.}
\begin{align*}
\mathrm{Flex}(\F,k-1):=\Big\{p\in\pp\hspace{1mm}\vert\hspace{1mm} p\not\in\Sing(\F),\hspace{1mm} \Tang(\F,\mathrm{T}^{\mathbb{P}}_{p}\F,p)=k\Big\}.
\end{align*}

\noindent Let us recall the notion of inflection divisor of $\F$, introduced by \textsc{Pereira}~\cite{Per01}, which allows to determine the set~$\mathrm{Flex}(\F).$ Let $\mathrm{Z}$ be a homogeneous vector field of degree $d$ on~$\mathbb{C}^3$ defining $\mathcal{F}.$ The~{\sl inflection divisor}~of~$\mathcal{F}$, denoted by $\IF$, is the divisor of $\pp$ defined by the homogeneous equation
\begin{equation}\label{equa:ext1}
\left| \begin{array}{ccc}
x   &  \mathrm{Z}(x)   &  \mathrm{Z}^2(x) \\
y   &  \mathrm{Z}(y)   &  \mathrm{Z}^2(y)  \\
z   &  \mathrm{Z}(z)   &  \mathrm{Z}^2(z)
\end{array} \right|=0.
\end{equation}
According to \cite{Per01}, $\IF$ satisfies the following properties:
\begin{enumerate}
\item [\texttt{1.}]
The support of $\IF$ is exactly the closure of the set  $\mathrm{Flex}(\F)$ of inflection points of $\F.$ More precisely, $\IF$ can be decomposed as  $\IF=\IinvF+\ItrF,$ where the support of $\IinvF$ is the set $\mathrm{Inv}(\F)$ of $\F$-invariant lines and the support of $\ItrF$ is the closure of the set of transverse inflection points of~$\mathcal{F}.$

\item [\texttt{2.}] If $\mathcal{C}$ is an algebraic curve invariant by $\mathcal{F},$ then $\mathcal{C}\subset\IF$ if and only if $\mathcal{C}\subset\mathrm{Inv}(\F).$

\item [\texttt{3.}] The degree of the divisor  $\IF$ is $3d.$
\end{enumerate}

\noindent The foliation $\mathcal{F}$ will be called {\sl convex} if its inflection divisor $\IF$ is totally invariant by $\mathcal{F}$, {\it i.e.} if $\IF$ is a product of invariant lines.

\section{Description of the foliations $\F$ of degree greater than or equal to $2$ such that $\dim\mathcal{O}(\F)=6$}

Recall that the foliations $\F_{1}^{d}$ and $\F_{2}^{d}$ are respectively defined in the affine chart $z=1$ by the $1$-forms
\begin{align*}
&\omega_{1}^{d}=y^{d}\mathrm{d}x+x^{d}(x\mathrm{d}y-y\mathrm{d}x)
&&\text{and}&&
\omega_{2}^{d}=x^{d}\mathrm{d}x+y^{d}(x\mathrm{d}y-y\mathrm{d}x).
\end{align*}
The foliation $\F_{1}^{d}$ is convex  with inflection divisor $\mathrm{I}_{\F_{1}^{d}}=\mathrm{I}_{\F_{1}^{d}}^{\hspace{0.2mm}\mathrm{inv}}=x^{d+1}y^{2d-1}$ and it has two singular points $s_1=[0:0:1]$ and $s_2=[0:1:0]$; the singularity $s_1$ has maximal algebraic multiplicity $d$ and $s_2$ is radial of maximal order $d-1.$ The foliation $\F_{2}^{d}$ is not convex with invariant inflection divisor $\mathrm{I}_{\F_{2}^{d}}^{\hspace{0.2mm}\mathrm{inv}}=x^{2d+1}$ and transverse inflection divisor $\mathrm{I}_{\F_{2}^{d}}^{\hspace{0.2mm}\mathrm{tr}}=y^{d-1}.$ The singular locus  $\Sing(\F_{2}^{d})$ is reduced to the point \mbox{$s_1=[0:0:1]$;} moreover $\nu(\F_{2}^{d},s_1)=d.$ We note that the $1$-forms $\dfrac{\omega_{1}^{d}}{x^2y^d}$ and $\dfrac{\omega_{2}^{d}}{x^{d+2}}$ are closed and they respectively admit as first integrals
\begin{align*}
&\frac{1}{d-1}\left(\frac{x}{y}\right)^{d-1}+\frac{1}{x}
&&\text{and}&&
\frac{1}{d+1}\left(\frac{y}{x}\right)^{d+1}-\frac{1}{x};
\end{align*}
this allows to check that
\begin{small}
\begin{align*}
&
\mathrm{Iso}(\F_{1}^{d})=\left\{
\left(\frac{\alpha^{d-1}x}{1+\beta x},\frac{\alpha^{d}y}{1+\beta x}\right)
\hspace{1mm}\Big\vert\hspace{1mm}
\alpha\in\mathbb{C}^*,\hspace{1mm}\beta\in\mathbb{C}\right\}
&&\text{\normalsize and}&&
\mathrm{Iso}(\F_{2}^{d})=\left\{
\left(\frac{\alpha^{d+1}x}{1+\beta x},\frac{\alpha^{d}y}{1+\beta x}\right)
\hspace{1mm}\Big\vert\hspace{1mm}
\alpha\in\mathbb{C}^*,\hspace{1mm}\beta\in\mathbb{C}
\right\}.
\end{align*}
\end{small}
In particular, $\dim\mathrm{Iso}(\F_{i}^{d})=2$ for $i=1,2.$ Thus the orbits $\mathcal{O}(\F_{1}^{d})$ and $\mathcal{O}(\F_{2}^{d})$ are both of  dimension~$6,$ which is the minimal dimension possible in any degree $d\geq 2$  (\cite[Proposition~2.3]{CDGBM10}). Theorem~\ref{thmalph:dim6} announced in  the Introduction shows that the orbits  $\mathcal{O}(\F_{1}^{d})$ and $\mathcal{O}(\F_{2}^{d})$ are the only orbits having minimal dimension~$6.$  The goal of this section is to prove this theorem.
\smallskip

\noindent  Let us denote by  $\chi(\pp)$ the \textsc{Lie} algebra of holomorphic vector fields on $\pp$: $\chi(\pp)$ is of course the
\textsc{Lie} algebra of the automorphism group of  $\pp.$ Let $\F$ be a foliation on  $\pp$ and let  $\mathrm{X}$  be an element of $\chi(\pp).$ Following~\cite{CDGBM10} we will say that $\mathrm{X}$ is a {\sl symmetry} of the foliation  $\mathcal{F}$ if the flow $\exp(t\mathrm{X})$ is, for each $t,$ in the isotropy group $\mathrm{Iso}(\mathcal{F})$ of~$\mathcal{F}.$ If $\omega$ defines $\mathcal{F}$ in an affine chart, $\mathrm{X}$ is a symmetry of~$\mathcal{F}$ if and only if $\mathrm{L}_\mathrm{X}\omega\wedge\omega=0,$ where $\mathrm{L}_\mathrm{X}\omega$ denotes the \textsc{Lie} derivative of $\omega$ along $\mathrm{X}.$
\begin{lem}\label{lem:symetrie-affine}
{\sl Let $\F$ be a foliation of degree $d$ on  $\pp$ and let $\mathrm{X}$ be a symmetry of $\F.$ Assume that there is an affine chart $\C^2\subset\pp$ such that the vector field $\mathrm{X}$ is affine ({\it i.e.} $\deg X\leq 1$) and let $\omega$ be a  $1$-form defining~$\F$ in this chart. Then there is a constant $\lambda\in\C$ such that $\mathrm{L}_{\mathrm{X}}\omega=\lambda\omega.$
}
\end{lem}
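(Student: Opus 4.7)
The plan is to integrate the infinitesimal symmetry condition, exploiting the fact that the flow of an affine vector field consists of affine automorphisms that act polynomially on 1-forms.

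First I would normalize by assuming, without loss of generality, that the polynomial coefficients of $\omega$ have no common factor (any other choice differs by multiplication by a polynomial, and both the hypothesis $L_{\mathrm{X}}\omega\wedge\omega=0$ and the conclusion $L_{\mathrm{X}}\omega=\lambda\omega$ are invariant in the natural way under such rescalings). With this convention, any polynomial $1$-form on $\mathbb{C}^2$ defining $\mathcal{F}$ with coprime coefficients is determined up to a nonzero multiplicative scalar.

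Next, since $\mathrm{X}$ is affine, its flow $\varphi_t:=\exp(t\mathrm{X})$ is a one-parameter subgroup of the affine group of $\mathbb{C}^2$; in particular, $\varphi_t$ is a polynomial diffeomorphism of $\mathbb{C}^2$ for every $t\in\mathbb{C}$. Hence $\varphi_t^*\omega$ is again a polynomial $1$-form, and its coefficients remain coprime, because $\varphi_t$ is a polynomial automorphism (so cannot introduce new common factors). The symmetry hypothesis $\varphi_t\in\mathrm{Iso}(\mathcal{F})$ says that $\varphi_t^*\omega$ defines the same foliation $\mathcal{F}$. Combined with the uniqueness from the normalization, this yields
$$\varphi_t^*\omega = c(t)\,\omega, \qquad c(t)\in\mathbb{C}^*,$$
with $c(0)=1$. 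Reading off $c(t)$ as the ratio of any non-identically-vanishing coefficient of $\varphi_t^*\omega$ to the corresponding coefficient of $\omega$ shows that $c$ depends holomorphically on $t$ in a neighborhood of $0$.

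Differentiating the identity $\varphi_t^*\omega=c(t)\omega$ at $t=0$ then gives $L_{\mathrm{X}}\omega = c'(0)\,\omega$, and we may set $\lambda:=c'(0)$. The only subtle point is establishing the global proportionality $\varphi_t^*\omega=c(t)\omega$ from the merely infinitesimal equation $L_{\mathrm{X}}\omega\wedge\omega=0$; this is precisely where the affine hypothesis on $\mathrm{X}$ is used, since it guarantees that the flow preserves the ring of polynomial $1$-forms and so that the saturation of $\omega$ is preserved as a whole, not only up to a locally defined function.
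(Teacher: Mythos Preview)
Your argument is correct and takes a genuinely different route from the paper's proof. The paper stays at the infinitesimal level: from $L_{\mathrm X}\omega\wedge\omega=0$ and the fact that $\omega$ has isolated singularities, the De~Rham--Saito division theorem gives $L_{\mathrm X}\omega=g\,\omega$ for some holomorphic (hence polynomial) $g$; the affine hypothesis is then used only as a degree bound $\deg L_{\mathrm X}\omega\le\deg\omega$, forcing $g$ to be constant. You instead integrate to the flow: since $\mathrm X$ is affine, $\varphi_t=\exp(t\mathrm X)$ is an affine automorphism of $\C^2$, so $\varphi_t^*\omega$ is a polynomial $1$-form with coprime coefficients defining $\F$, whence $\varphi_t^*\omega=c(t)\omega$; differentiating at $t=0$ yields the result. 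Your approach avoids the division theorem entirely and is arguably more elementary; the paper's approach is shorter and works purely algebraically, never needing to exponentiate.

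One minor comment: your parenthetical remark that the conclusion $L_{\mathrm X}\omega=\lambda\omega$ is ``invariant in the natural way'' under multiplication by a polynomial is not quite right (if $\omega'=f\omega$ then $L_{\mathrm X}\omega'=(X(f)+\lambda f)\omega$, which is a constant multiple of $\omega'$ only when $f$ is an eigenvector of $X$). This does not affect your proof, since in the paper's convention a $1$-form ``defining $\F$ in this chart'' already has coprime coefficients (the paper's own proof invokes ``$\omega$ has isolated singularities''), so the normalization step is simply making explicit what the statement assumes.
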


\begin{proof}
We will use an argument similar to one in~\cite[Proposition~2.5]{CDGBM10}. Since $\mathrm{L}_\mathrm{X}\omega\wedge\omega=0$ and~$\omega$ has isolated singularities, the  \textsc{De Rham}-\textsc{Saito} division theorem (\emph{cf.} \cite{Sai76} or \cite[Proposition~1.14]{CCD13}) ensures the existence of a holomorphic function $g$ on $\C^2$ such that  $\mathrm{L}_{\mathrm{X}}\omega=g\omega.$ The $1$-form $\omega$ and the vector field $\mathrm{X}$ being polynomials, $\mathrm{L}_{\mathrm{X}}\omega$ is also polynomial; therefore $g$ is rational and holomorphic on $\C^2$ hence polynomial. The~vector field $\mathrm{X}$ being affine we have $\deg\mathrm{L}_{\mathrm{X}}\omega\leq\deg\omega$; the equality $\mathrm{L}_{\mathrm{X}}\omega=g\omega$ implies that $g$ is constant.
\end{proof}
\medskip

\noindent If $\mathcal{F}$ is a foliation on  $\pp$, we will denote by $\mathfrak{iso}(\F)$ the \textsc{Lie} algebra of the algebraic group $\mathrm{Iso}(\F)$; $\mathfrak{iso}(\F)$ is a \textsc{Lie} subalgebra of~$\chi(\pp)$ and it consists of symmetries of~$\F.$ We know from
 \cite[Proposition 2.5]{CDGBM10} that if $\dim\mathfrak{iso}(\F)=2$ then $\mathfrak{iso}(\F)$ is {\sl affine}, {\it i.e.} generated by two vector fields $\mathrm{X}$ and $\mathrm{Y}$ such that $[\mathrm{X},\mathrm{Y}]=\mathrm{Y}.$ The~following lemma classifies the affine  \textsc{Lie} subalgebras of $\chi(\pp)$ and it will be used to prove Theorem~\ref{thmalph:dim6}.
\begin{lem}\label{lem:sous-algebres-affines}
{\sl Every affine \textsc{Lie} subalgebra of $\chi(\pp)$ is linearly conjugated to one of the following models
\begin{itemize}
\item[($\mathfrak{a}$)] $\big\langle \gamma\,x\px+y\py,\py \big\rangle$ with $\gamma\in\C^*;$

\item[($\mathfrak{b}$)] $\big\langle y\py,\py \big\rangle;$

\item[($\mathfrak{c}$)] $\big\langle \px+y\py,\py\big\rangle;$

\item[($\mathfrak{d}$)] $\big\langle x\px+(x+y)\py,\py\big\rangle;$

\item[($\mathfrak{e}$)] $\big\langle x\px+2y\py,\px+x\py \big\rangle.$
\end{itemize}
}
\end{lem}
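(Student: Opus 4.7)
The plan is to use the standard identification of the Lie algebra $\chi(\pp)$ with $\mathfrak{sl}_3(\C)$, under which the adjoint action of $\mathrm{Aut}(\pp) = \mathrm{PGL}_3(\C)$ corresponds to matrix conjugation. The statement then reduces to classifying the pairs $(\mathrm{X}, \mathrm{Y}) \in \mathfrak{sl}_3(\C)^2$ satisfying $[\mathrm{X}, \mathrm{Y}] = \mathrm{Y}$ up to simultaneous $\mathrm{PGL}_3(\C)$-conjugation and to the basis change $(\mathrm{X},\mathrm{Y}) \mapsto (\mathrm{X} + c\mathrm{Y}, \lambda\mathrm{Y})$ with $c \in \C$, $\lambda \in \C^*$, which preserves both the spanned subalgebra and the normalization of the bracket.

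The key observation that restricts the problem dramatically is that $\mathrm{Y}$ must be nilpotent. Indeed, by cyclic invariance of the trace,
$$\mathrm{tr}(\mathrm{Y}^{k}) = \mathrm{tr}\bigl([\mathrm{X},\mathrm{Y}]\,\mathrm{Y}^{k-1}\bigr) = \mathrm{tr}(\mathrm{X}\mathrm{Y}^{k}) - \mathrm{tr}(\mathrm{Y}\mathrm{X}\mathrm{Y}^{k-1}) = 0 \qquad \forall\, k \geq 1,$$
so all Newton power sums of the eigenvalues of $\mathrm{Y}$ vanish and $\mathrm{Y}$ is nilpotent. As $\mathrm{Y} \neq 0$, its Jordan type in $\mathfrak{sl}_3(\C)$ is either sub-regular (one $2 \times 2$ block, corresponding to a rank-$1$ nilpotent) or regular (one $3 \times 3$ block). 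After a $\mathrm{PGL}_3(\C)$-conjugation I may assume respectively $\mathrm{Y} = E_{23}$ or $\mathrm{Y} = E_{12} + E_{23}$.

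In the sub-regular case, writing $\mathrm{X} = (x_{ij})$ and solving $[\mathrm{X}, E_{23}] = E_{23}$ entry by entry forces $x_{12} = x_{31} = x_{32} = 0$ and $x_{22} - x_{33} = 1$. Subtracting a scalar matrix to set $x_{33} = 0$ and absorbing $x_{23}$ into the freedom $\mathrm{X} \mapsto \mathrm{X} + c\mathrm{Y}$ reduces $\mathrm{X}$ to the three-parameter family
$$\mathrm{X} = \begin{pmatrix} a & 0 & b \\ c & 1 & 0 \\ 0 & 0 & 0 \end{pmatrix},$$
whose characteristic polynomial is $-\lambda(\lambda - 1)(\lambda - a)$, so the eigenvalues $\{0, 1, a\}$ are independent of $b$ and $c$. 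The residual action is by the stabilizer of $E_{23}$ in $\mathrm{PGL}_3(\C)$ (a connected solvable group of dimension~$5$) together with the rescaling $\mathrm{Y} \mapsto \lambda \mathrm{Y}$. A stratification according to the value of $a$ and the semisimplicity of $\mathrm{X}$ produces four classes: three distinct eigenvalues with $\mathrm{X}$ diagonalisable (model $(\mathfrak{a})$, parameter $\gamma = a \in \C^*$), double eigenvalue $0$ with $\mathrm{X}$ semisimple (model $(\mathfrak{b})$), double eigenvalue $0$ with $\mathrm{X}$ non-semisimple (model $(\mathfrak{c})$), and double eigenvalue $1$ with $\mathrm{X}$ non-semisimple (model $(\mathfrak{d})$).

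In the regular case $\mathrm{Y} = E_{12} + E_{23}$, the centraliser of $\mathrm{Y}$ inside $\mathfrak{sl}_3(\C)$ is two-dimensional, spanned by $\mathrm{Y}$ and $\mathrm{Y}^{2}$, so the image of $\mathrm{ad}_{\mathrm{Y}}$ has codimension $2$ and the equation $[\mathrm{X}, \mathrm{Y}] = \mathrm{Y}$ admits a two-dimensional affine family of solutions. A particular solution is $\mathrm{X} = \mathrm{diag}(1, 2, 0)$, and modding out by the two-dimensional stabilizer of $\mathrm{Y}$ together with the freedom $\mathrm{X} \mapsto \mathrm{X} + c \mathrm{Y}$ yields exactly model $(\mathfrak{e})$. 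The main obstacle is the explicit stabiliser-normalisation in the sub-regular case, where one must verify that the four strata are exhaustive and mutually non-conjugate; the mutual non-conjugacy follows from the invariance under $\mathrm{PGL}_3(\C)$ of the Jordan type of $\mathrm{X}$ and of the eigenvalue multiset of $\mathrm{X}$ in $\mathfrak{sl}_3(\C)$, while exhaustiveness is the routine Jordan-form classification of the restricted $2 \times 2$ upper block once the residual diagonal torus action is exploited.
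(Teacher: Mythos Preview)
Your approach coincides with the paper's: both identify $\chi(\pp)$ with $\mathfrak{sl}_3(\C)$, bring $\mathrm{Y}$ to Jordan normal form, and then normalise $\mathrm{X}$ using the stabiliser of $\mathrm{Y}$ together with the substitution $\mathrm{X}\mapsto\mathrm{X}+c\mathrm{Y}$. The one genuine difference is your trace identity establishing the nilpotency of $\mathrm{Y}$ a~priori; the paper instead enumerates all four $\mathfrak{sl}_3$-Jordan types for $N=\tau^{-1}(\mathrm{Y})$ and rules out the two non-nilpotent ones by computing $[M,N]-N$ explicitly. Your shortcut is cleaner and avoids two of the paper's four cases.

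There is, however, one imprecision in your sub-regular stratification. With eigenvalues $\{0,1,a\}$ you describe the stratum ``three distinct eigenvalues'' as yielding model~$(\mathfrak{a})$ with parameter $\gamma=a\in\C^*$, and then treat only the double eigenvalue~$0$ (models $(\mathfrak{b})$, $(\mathfrak{c})$) and the double eigenvalue~$1$ with $\mathrm{X}$ non-semisimple (model $(\mathfrak{d})$). This omits the case $a=1$ with $\mathrm{X}$ semisimple, which is precisely model~$(\mathfrak{a})$ at $\gamma=1$; in the paper this case arises separately, in the sub-case $m_{33}=\tfrac{2}{3}$, $m_{12}=0$. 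Your parenthetical ``$\gamma=a\in\C^*$'' seems to anticipate the full range, but the verbal description of the stratum excludes $a=1$. Once this fifth stratum is folded in, your argument is complete.
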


\begin{proof}
Let $\mathfrak{g}$ be an affine  \textsc{Lie} subalgebra of $\chi(\pp).$ Then there exist $\mathrm{X}$ and $\mathrm{Y}$ in $\chi(\pp)$ such that $\mathfrak{g}=\langle\mathrm{X},\mathrm{Y}\rangle$ and $[\mathrm{X},\mathrm{Y}]=\mathrm{Y}.$ Fixing homogeneous coordinates  $[x:y:z]$ in $\pp$ we have an isomorphism of  \textsc{Lie}~algebras $\tau\hspace{0.1cm}\colon\mathfrak{sl}_{3}(\C)\rightarrow\chi(\pp)$ defined, for  $A\in\mathfrak{sl}_{3}(\C),$ by
\[
\tau(A)=(x\hspace{2mm} y\hspace{2mm} z)A
\left(\begin{array}{c}
\vspace{0.5mm}
\px\\
\vspace{0.5mm}
\py\\
\vspace{0.5mm}
\pz
\end{array}\right).
\]
Notice that if
$
A=\left(
\begin{array}{ccc}
a_{11} & a_{12} & a_{13}\\
a_{21} & a_{22} & a_{23}\\
a_{31} & a_{32} & a_{33}
\end{array}\right)\in\mathfrak{sl}_{3}(\C),
$
then in the affine chart $z=1$ the vector field $\tau(A)\in\chi(\pp)$ writes as 
\begin{align*}
\left(a_{31}+(a_{11}-a_{33})x+a_{21}y-a_{13}x^2-a_{23}xy\right)\px+\left(a_{32}+a_{12}x+(a_{22}-a_{33})y-a_{13}xy-a_{23}y^2\right)\py.
\end{align*}

\noindent Let  $M$ and $N$ be the  matrices of $\mathfrak{sl}_{3}(\C)$ associated to the vector fields $\mathrm{X}$ and $\mathrm{Y}$ respectively, {\it i.e.} $M=\tau^{-1}(\mathrm{X})$ and $N=\tau^{-1}(\mathrm{Y})$. Then the \textsc{Lie} bracket $[\mathrm{X},\mathrm{Y}]$ corresponds to $[M,N]:=MN-NM$ and therefore $[M,N]=N.$

\noindent Let us write~$M=\left(\begin{array}{ccc}
-m_{22}-m_{33} & m_{12} & m_{13}\\
m_{21}         & m_{22} & m_{23}\\
m_{31}         & m_{32} & m_{33}
\end{array}\right).$
Taking into account the possible \textsc{Jordan} forms of a matrix of $\mathfrak{sl}_{3}(\C),$ it suffices us to treat the following possibilities
\begin{align*}
N=\left(\begin{array}{ccc}
-a-b & 0 & 0\\
0 & a & 0\\
0 & 0 & b
\end{array}\right),
&&
N=\left(\begin{array}{ccc}
-2c & 0 & 0\\
0   & c & 0\\
0   & 1 & c
\end{array}\right),
&&
N=\left(\begin{array}{ccc}
0 & 0 & 0\\
0 & 0 & 0\\
0 & 1 & 0
\end{array}\right),
&&
N=\left(\begin{array}{ccc}
0 & 0 & 0\\
1 & 0 & 0\\
0 & 1 & 0
\end{array}\right).
\end{align*}
where $a,b\in\C,c\in\C^*,$ with $(a,b)\neq(0,0).$
\smallskip

\textbf{\textit{1.}} If
$N=\left(\begin{array}{ccc}
-a-b & 0 & 0\\
0 & a & 0\\
0 & 0 & b
\end{array}\right)$
then the equality $[M,N]=N$ implies that $a=b=0$: contradiction.
\smallskip

\textbf{\textit{2.}} If
$N=\left(\begin{array}{ccc}
-2c & 0 & 0\\
0   & c & 0\\
0   & 1 & c
\end{array}\right)$
then the  $(1,1)$ coefficient of the matrix $[M,N]-N$ is equal to $2c$ and is therefore nonzero: contradiction.
\smallskip

\textbf{\textit{3.}} Assume that
$N=\left(\begin{array}{ccc}
0 & 0 & 0\\
0 & 0 & 0\\
0 & 1 & 0
\end{array}\right)$; the equality $[M,N]=N$ then leads to
\begin{small}
$M=\left(\begin{array}{ccc}
1-2m_{33} & m_{12}   & 0\\
0         & m_{33}-1 & 0\\
m_{31}    & m_{32}   & m_{33}
\end{array}\right)$.
\end{small}
Up to replacing $M$ by $M-m_{32}N$ we can assume that $m_{32}=0.$ Now we will distinguish several eventualities:
\smallskip

\textbf{\textit{3.1.}} When $(3m_{33}-1)(3m_{33}-2)\neq0$ the matrix
$P=\left(\begin{array}{ccc}
3m_{33}-1 & m_{12}        & 0\\
0         & 3m_{33}-2     & 0\\
-m_{31}   & -m_{31}m_{12} & 3m_{33}-2
\end{array}\right)$
commutes with $N$ and
$P^{-1}MP=\left(\begin{array}{ccc}
1-2m_{33} & 0        & 0\\
0         & m_{33}-1 & 0\\
0         & 0        & m_{33}
\end{array}\right)$.
Thus $\mathfrak{g}$ is linearly conjugated to
\begin{align*}
&
\big\langle \tau(P^{-1}MP),\tau(N) \big\rangle=
\big\langle (1-3m_{33})x\tfrac{\partial}{\partial x}-y\tfrac{\partial}{\partial y},\tfrac{\partial}{\partial y} \big\rangle=
\big\langle \gamma\,x\tfrac{\partial}{\partial x}+y\tfrac{\partial}{\partial y},\tfrac{\partial}{\partial y} \big\rangle,
\qquad\text{where }
\gamma=3m_{33}-1\in\C^*.
\end{align*}
\smallskip

\textbf{\textit{3.2.}} Assume that $m_{33}=\frac{1}{3}.$ If $\delta\in\C^*$ then the matrix
$P=\left(\begin{array}{ccc}
\frac{1}{\delta} & -m_{12}       & 0\\
0           &  1            & 0\\
0           &  m_{12}m_{31} & 1
\end{array}\right)$
commutes with $N$ and
$P^{-1}MP=\left(\begin{array}{ccc}
\frac{1}{3}      &  0           & 0\\
0                & -\frac{2}{3} & 0\\
\frac{m_{31}}{\delta} &  0      & \frac{1}{3}
\end{array}\right)$.
As a result $\mathfrak{g}$ is linearly conjugated to
\begin{align*}
&
\big\langle \tau(P^{-1}MP),\tau(N) \big\rangle=
\big\langle \tfrac{m_{31}}{\delta}\tfrac{\partial}{\partial x}-y\tfrac{\partial}{\partial y},\tfrac{\partial}{\partial y} \big\rangle=
\big\langle -\tfrac{m_{31}}{\delta}\tfrac{\partial}{\partial x}+y\tfrac{\partial}{\partial y},\tfrac{\partial}{\partial y} \big\rangle.
\end{align*}
The case where $m_{31}=0$ leads to the model $(\mathfrak{b}).$ If  $m_{31}\neq0$ then by taking  $\delta=-m_{31}$ we get the model~$(\mathfrak{c}).$
\smallskip

\textbf{\textit{3.3.}} Assume that $m_{33}=\frac{2}{3}.$ If $\delta\in\C^*$ then the matrix
$P=\left(\begin{array}{ccc}
\delta         &  0            & 0\\
0              &  1            & 0\\
-\delta m_{31} & -m_{12}m_{31} & 1
\end{array}\right)$
commutes with $N$ and
$P^{-1}MP=\left(\begin{array}{ccc}
-\frac{1}{3} & \frac{m_{12}}{\delta} & 0\\
0            &  -\frac{1}{3}         & 0\\
0            &  0                    & \frac{2}{3}
\end{array}\right)$.
As a consequence $\mathfrak{g}$ is linearly conjugated to
\begin{align*}
&
\big\langle \tau(P^{-1}MP),\tau(N) \big\rangle=
\big\langle -x\tfrac{\partial}{\partial x}+(\tfrac{m_{12}}{\delta}x-y)\tfrac{\partial}{\partial y},\tfrac{\partial}{\partial y} \big\rangle=
\big\langle  x\tfrac{\partial}{\partial x}+(y-\tfrac{m_{12}}{\delta}x)\tfrac{\partial}{\partial y},\tfrac{\partial}{\partial y} \big\rangle.
\end{align*}
The case $m_{12}=0$ leads to the model $(\mathfrak{a})$ with $\gamma=1.$ If $m_{12}\neq0$ then by taking $\delta=-m_{12}$ we obtain the model~$(\mathfrak{d}).$
\smallskip

\textbf{\textit{4.}} Assume that
$N=\left(\begin{array}{ccc}
0 & 0 & 0\\
1 & 0 & 0\\
0 & 1 & 0
\end{array}\right)$;
then the equality $[M,N]=N$ implies that
$M=\left(\begin{array}{ccc}
-1     & 0      & 0\\
m_{32} & 0      & 0\\
m_{31} & m_{32} & 1
\end{array}\right)$.
Up to replacing $M$ by $M-m_{32}N$ we can assume that $m_{32}=0.$  The matrix
$P=\left(\begin{array}{ccc}
1                 & 0 & 0\\
0                 & 1 & 0\\
-\frac{m_{31}}{2} & 0 & 1
\end{array}\right)$
commutes with $N$ and
$P^{-1}MP=\left(\begin{array}{ccc}
-1 & 0 & 0\\
0  & 0 & 0\\
0  & 0 & 1
\end{array}\right)$.
Therefore $\mathfrak{g}$ is linearly conjugated to
\begin{align*}
&
\big\langle \tau(P^{-1}MP),\tau(N) \big\rangle=
\big\langle -2x\tfrac{\partial}{\partial x}-y\tfrac{\partial}{\partial y},y\tfrac{\partial}{\partial x}+\tfrac{\partial}{\partial y} \big\rangle=
\big\langle  2x\tfrac{\partial}{\partial x}+y\tfrac{\partial}{\partial y},y\tfrac{\partial}{\partial x}+\tfrac{\partial}{\partial y} \big\rangle.
\end{align*}
By permuting the coordinates $x$ and $y$ we obtain the model $(\mathfrak{e}).$
\end{proof}

\begin{proof}[\sl Proof of Theorem~\ref{thmalph:dim6}]
Since $\dim\mathfrak{iso}(\F)=\dim\mathrm{Iso}(\F)=2,$ \cite[Proposition~2.5]{CDGBM10} implies that $\mathfrak{iso}(\F)$ is affine. Therefore, up to linear conjugation, $\mathfrak{iso}(\F)$ is one of the models  ($\mathfrak{a}$)--($\mathfrak{e}$) of Lemma~\ref{lem:sous-algebres-affines}.

\noindent Let $\omega$ be a  $1$-form defining $\F$ in the affine chart $z=1$
$$\hspace{1cm}\omega=A(x,y)\mathrm{d}x+B(x,y)\mathrm{d}y,\quad A,B\in\mathbb{C}[x,y],\hspace{2mm}\gcd(A,B)=1.$$
We will study the five possible models ($\mathfrak{a}$)--($\mathfrak{e}$) of the  \textsc{Lie} algebra $\mathfrak{iso}(\F)$ and show that $\omega$ is linearly conjugated to one of the two $1$-forms $\omega_{1}^{d}$ or $\omega_{2}^{d}.$
\smallskip

\textbf{\textit{1.}} Assume that $\mathfrak{iso}(\F)$ is of one of the types ($\mathfrak{a}$)--($\mathfrak{d}$), {\it i.e.} that $\mathfrak{iso}(\F)=\big\langle \mathrm{X},\mathrm{Y}\big\rangle$ where $\mathrm{X}\in\{\gamma\,x\px+y\py,\varepsilon\px+y\py,x\px+(x+y)\py\},\mathrm{Y}=\py$ with $\varepsilon\in\{0,1\}$ and $\gamma\in\C^*.$ By Lemma~\ref{lem:symetrie-affine} there exist $\lambda,\mu\in\C$ such that $\mathrm{L}_{\mathrm{X}}\omega=\lambda\omega$ and $\mathrm{L}_{\mathrm{Y}}\omega=\mu\omega.$ Since $\mathrm{L}_{\mathrm{Y}}\mathrm{d}x=\mathrm{d}\mathrm{L}_{\mathrm{Y}}x=0$ and $\mathrm{L}_{\mathrm{Y}}\mathrm{d}y=\mathrm{d}\mathrm{L}_{\mathrm{Y}}y=0,$ we have $\mathrm{L}_{\mathrm{Y}}\omega=\mathrm{Y}(A)\mathrm{d}x+\mathrm{Y}(B)\mathrm{d}y=\frac{\partial A}{\partial y}\mathrm{d}x+\frac{\partial B}{\partial y}\mathrm{d}y.$ Therefore $\mathrm{L}_{\mathrm{Y}}\omega=\mu\omega$ if and only if $\frac{\partial A}{\partial y}=\mu A$ and $\frac{\partial B}{\partial y}=\mu B.$ Since $A,B\in\C[x,y]$ and~$\mu\in\C,$ it follows that $\mu=0$, $A(x,y)=A(x)$ and $B(x,y)=B(x).$ Thus
$$\hspace{1cm}\omega=A(x)\mathrm{d}x+B(x)\mathrm{d}y,\quad A,B\in\mathbb{C}[x],\hspace{2mm}\gcd(A,B)=1.$$

\textbf{\textit{1.1.}} Let us consider the case where $\mathrm{X}=\gamma\,x\px+y\py$ with $\gamma\in\C^*.$ We have
\begin{align*}
\mathrm{L}_{\mathrm{X}}\omega=
\mathrm{X}(A)\mathrm{d}x+A\mathrm{d}\mathrm{X}(x)+\mathrm{X}(B)\mathrm{d}y+B\mathrm{d}\mathrm{X}(y)
=\big(\gamma\,xA'+\gamma\,A\big)\mathrm{d}x+\big(\gamma\,xB'+B\big)\mathrm{d}y,
\end{align*}
so that $\mathrm{L}_{\mathrm{X}}\omega=\lambda\omega$ if and only if  $\gamma\,xA'=(\lambda-\gamma)A$ and $\gamma\,xB'=(\lambda-1)B.$ By putting $\kappa=\frac{\lambda-\gamma}{\gamma}$ and~$\nu=\frac{\lambda-1}{\gamma},$ the last two equations can be rewritten as $xA'=\kappa\,A$ and $xB'=\nu B$ and can be immediately integrated to give $A(x)=\alpha\,x^{\kappa}$ and $B(x)=\beta\,x^{\nu},$ where $\alpha,\beta\in\C.$ Since $A,B\in\mathbb{C}[x]$ and $\gcd(A,B)=1,$ we deduce that $\alpha,\beta\in\C^*,$ $\kappa,\nu\in\N$ and~$\kappa\,\nu=0.$ The equality $\deg\F=d$ then implies that
\begin{itemize}
\item either $\kappa=0$ and $\nu=d,$ in which case $\omega=\alpha\mathrm{d}x+\beta\,x^{d}\mathrm{d}y;$

\item or $\nu=0$ and $\kappa=d,$ in which case $\omega=\alpha\,x^d\mathrm{d}x+\beta\mathrm{d}y.$
\end{itemize}

\noindent If $\omega=\alpha\mathrm{d}x+\beta\,x^{d}\mathrm{d}y,$ resp. $\omega=\alpha\,x^d\mathrm{d}x+\beta\mathrm{d}y,$ by making the change of coordinates $\left(x,y\right)\mapsto\left(\frac{y}{x},-\frac{\alpha}{\beta\,x}\right)$, we reduce ourselves to $\omega=\omega_{1}^{d}=y^{d}\mathrm{d}x+x^{d}(x\mathrm{d}y-y\mathrm{d}x),$ resp. $\omega=\omega_{2}^{d}=x^{d}\mathrm{d}x+y^{d}(x\mathrm{d}y-y\mathrm{d}x).$
\smallskip

\textbf{\textit{1.2.}} Let us examine the case where $X=\varepsilon\px+y\py$ with  $\varepsilon\in\{0,1\}.$ Since $\mathrm{L}_{\mathrm{X}}\mathrm{d}x=\mathrm{d}\mathrm{L}_{\mathrm{X}}x=0$ and $\mathrm{L}_{\mathrm{X}}\mathrm{d}y=\mathrm{d}\mathrm{L}_{\mathrm{X}}y=\mathrm{d}y,$ we have $\mathrm{L}_{\mathrm{X}}\omega=\mathrm{X}(A)\mathrm{d}x+\mathrm{X}(B)\mathrm{d}y+B\mathrm{d}y=\varepsilon\,A'\mathrm{d}x+(\varepsilon\hspace{0.2mm}B'+B)\mathrm{d}y.$ Therefore $\mathrm{L}_{\mathrm{X}}\omega=\lambda\omega$ if and~only~if~$\varepsilon\,A'=\lambda\,A$ and $\varepsilon\hspace{0.2mm}B'=(\lambda-1)B.$ Since $A,B\in\C[x]$ and $\lambda\in\C,$ it follows that $AB=0$: contradiction with $\gcd(A,B)=1.$
\smallskip

\textbf{\textit{1.3.}} Let us study the eventuality: $\mathrm{X}=x\px+(x+y)\py.$ We have $\mathrm{d}\mathrm{X}(x)=\mathrm{d}x$ and $\mathrm{d}\mathrm{X}(y)=\mathrm{d}x+\mathrm{d}y,$ so that
\[
\mathrm{L}_{\mathrm{X}}\omega=\mathrm{X}(A)\mathrm{d}x+A\mathrm{d}x+\mathrm{X}(B)\mathrm{d}y+B(\mathrm{d}x+\mathrm{d}y)=(xA'+A+B)\mathrm{d}x+(xB'+B)\mathrm{d}y.
 \]
The condition $\mathrm{L}_{\mathrm{X}}\omega=\lambda\omega$ is then equivalent to the system of differential equations $xA'+B=(\lambda-1)A$ and~$xB'=(\lambda-1)B,$ which can be easily integrated to yield $A(x)=(a-b\ln x)x^{\lambda-1}$ and $B(x)=bx^{\lambda-1},$ where~$a,b\in\C.$ Since $A\in\C[x]$, we deduce that $b=0$ and therefore $B\equiv0$: contradiction with $\gcd(A,B)=1.$
\smallskip

\textbf{\textit{2.}} Assume that $\mathfrak{iso}(\F)$ is of type ($\mathfrak{e}$), {\it i.e.} $\mathfrak{iso}(\F)=\big\langle \mathrm{X},\mathrm{Y}\big\rangle$ where $\mathrm{X}=x\px+2y\py,\mathrm{Y}=\px+x\py.$ As before by writing explicitly  that $\mathrm{L}_{\mathrm{X}}\omega=\lambda\omega$ and $\mathrm{L}_{\mathrm{Y}}\omega=\mu\omega,$ with $\lambda,\mu\in\C$ (Lemma~\ref{lem:symetrie-affine}), we obtain the system of partial differential equations
\begin{align*}
&
x\tfrac{\partial A}{\partial x}+2y\tfrac{\partial A}{\partial y}=(\lambda-1)A,&&
x\tfrac{\partial B}{\partial x}+2y\tfrac{\partial B}{\partial y}=(\lambda-2)B,&&
\tfrac{\partial A}{\partial x}+x\tfrac{\partial A}{\partial y}=\mu\hspace{0.4mm}A-B,&&
\tfrac{\partial B}{\partial x}+x\tfrac{\partial B}{\partial y}=\mu\hspace{0.2mm}B.
\end{align*}

\noindent It follows in particular that
\begin{align*}
&
(x^2-2y)\tfrac{\partial B}{\partial y}=(\mu\,x+2-\lambda)B
&& \text{and} &&
(x^2-2y)\tfrac{\partial A}{\partial y}=(\mu\,x+1-\lambda)A-xB.
\end{align*}

\noindent Elementary integrations then lead to
\begin{align*}
&
B(x,y)=b(x)(x^2-2y)^{\frac{\lambda-2-\mu\,x}{2}}
&& \text{and} &&
A(x,y)=\left(a(x)\sqrt{x^2-2y}-xb(x)\right)(x^2-2y)^{\frac{\lambda-2-\mu\,x}{2}},
\end{align*}
for some functions $a$ and $b$ of the coordinate $x.$ Since $A,B\in\C[x,y]$ and $\gcd(A,B)=1,$ we deduce that $\lambda-2-\mu\,x=0$ and $a(x)=0$ for any $x\in\C$, hence $\lambda=2,\mu=0$ and $a\equiv0.$ Therefore $B(x,y)=b(x)$ and $A(x,y)=-xb(x)=-xB(x,y)$: contradiction with $\gcd(A,B)=1.$
\end{proof}

\vspace{0.2cm}

\section{Foliations of $\mathbf{F}(d)$ degenerating onto $\F_{1}^{d}$ and $\F_{2}^{d}$}\label{sec:feuilletages-dgnr-F1-F2}
In this section we will study the problem of knowing whether the closure of the orbit of a foliation of $\mathbf{F}(d)$ contains the foliations $\F_1^d$ and $\F_2^d.$ The following definition will be useful.

\begin{defin}[\cite{CDGBM10}]
Let $\F$ and $\F'$ be two foliations of~$\mathbf{F}(d).$ We say that $\F$ {\sl degenerates} onto $\F'$ if the closure $\overline{\mathcal{O}(\F)}$ (in~$\mathbf{F}(d)$) of the orbit $\mathcal{O}(\F)$ contains~$\mathcal{O}(\F')$ and $\mathcal{O}(\mathcal{F})\not=\mathcal{O}(\mathcal{F}').$
\end{defin}

\begin{rems}\label{rems:degenerescence}
Let $\F$ and $\F'$ be two foliations such that  $\F$ degenerates onto $\F'$. Then:
\begin{itemize}
\item [(i)]   $\dim\mathcal{O}(\F')<\dim\mathcal{O}(\F)$;

\item [(ii)] $\deg\IinvF\leq\deg\mathrm{I}_{\mathcal{F}'}^{\mathrm{inv}},$ which is equivalent to~$\deg\ItrF\geq\deg\mathrm{I}_{\mathcal{F}'}^{\hspace{0.2mm}\mathrm{tr}}$. In particular, if $\F$ is convex, $\F'$ is also convex.
\end{itemize}
\end{rems}

\noindent Corollary~\ref{coralph:dim-o(F)=<7} is an immediate consequence of Theorem~\ref{thmalph:dim6} and Remark~\ref{rems:degenerescence}~(i).
\begin{rem}\label{rem:adherence-orbite-homogene}
Corollary~\ref{coralph:dim-o(F)=<7} applies particularly to any foliation of $\mathbf{F}(d)$ which is {\sl homogeneous}, {\it i.e.} which is given, for a suitable choice of affine coordinates $(x,y),$ by a homogeneous $1$-form
$\omega=A(x,y)\mathrm{d}x+B(x,y)\mathrm{d}y,$ where $A,B\in\mathbb{C}[x,y]_d$ and $\gcd(A,B)=1.$ Indeed, for such a foliation  $\mathcal{H}\in \mathbf{F}(d)$, we have (\emph{cf.}~\cite{BM18})
\[
\mathrm{Iso}(\mathcal{H})=\{(\alpha\,x,\alpha\,y)\hspace{1mm}\vert\hspace{1mm}\alpha\in\mathbb{C}^*\};
\]
in particular, $\dim\mathcal{O}(\mathcal{H})=7$ and consequently
\[
\hspace{5.5mm}\overline{\mathcal{O}(\mathcal{H})}\subset\mathcal{O}(\mathcal{H})\cup\mathcal{O}(\F_{1}^{d})\cup\mathcal{O}(\F_{2}^{d}).
\]
\end{rem}

\noindent Assertion~\textbf{\textit{1.}} (resp. \textbf{\textit{2.}}) of the following proposition gives a necessary (resp. sufficient) condition for a~foliation of~$\mathbf{F}(d)$ to degenerate onto the foliation~$\F_{1}^{d}.$
\begin{pro}\label{pro:cond-nécess-suff-dgnr-F1}
{\sl Let $\F$ be an element of $\mathbf{F}(d)$ such that $\F_{1}^{d}\not\in\mathcal{O}(\F).$ The following assertions hold:

\noindent\textbf{\textit{1.}}~If $\F$ degenerates onto $\F_{1}^{d}$, then $\F$ possesses a non-degenerate singularity $m$ satisfying $\mathrm{BB}(\F,m)=4.$

\noindent\textbf{\textit{2.}}~If $\F$ possesses a quasi-radial singularity of maximal order $d-1$, {\it i.e.} if $\mathrm{QRad}(\F,d-1)\neq\vide$, then $\F$ degenerates onto $\F_1^d.$
}
\end{pro}

\begin{proof}
\textbf{\textit{1.}} Assume that  $\F$ degenerates onto $\F_{1}^{d}$. Then there is an analytic family of foliations $(\F_\varepsilon)$ defined by a family of $1$-forms $(\omega_\varepsilon)$ such that $\F_\varepsilon$ belongs to $\mathcal{O}(\F)$ for $\varepsilon\neq 0$ and $\F_{\varepsilon=0}=\F_{1}^{d}.$ The non-degenerate singular point of $\F_{1}^{d}$, denoted by~$m_0,$ is \og stable\fg\hspace{0.2mm} in the sense that there is an analytic family $(m_\varepsilon)$ of non-degenerate singular points of  $\F_\varepsilon$ such that $m_{\varepsilon=0}=m_0.$ The $\F_\varepsilon$'s being conjugated to $\F$ for $\varepsilon\neq 0$, the foliation $\F$ admits a non-degenerate singular point $m$ such that
$$
\forall\hspace{1mm}\varepsilon\in\mathbb{C}^*,\hspace{1mm}\mathrm{BB}(\F_\varepsilon,m_\varepsilon)=\mathrm{BB}(\F,m).
$$
Since $\mu(\F_\varepsilon,m_\varepsilon)=1$ for any $\varepsilon\in\mathbb{C}$, the function $\varepsilon\mapsto\mathrm{BB}(\F_\varepsilon,m_\varepsilon)$ is continuous, hence constant on $\mathbb{C}$. As a~consequence
\begin{align*}
&&\mathrm{BB}(\F,m)=\mathrm{BB}(\F_{\varepsilon=0},m_{\varepsilon=0})=\mathrm{BB}(\F_{1}^{d},m_0)=4.
\end{align*}
\smallskip

\noindent\textbf{\textit{2.}} Assume that $\F$ has a quasi-radial singularity $m$ of order $d-1$. Then $\mu(\F,m)=1,\mathrm{BB}(\F,m)=4$ and~$\kappa(\F,m)=d.$ This last equality ensures the existence of a line $\ell_m$ passing through $m$, not invariant by $\F$ and such that $\Tang(\F,\ell_m,m)=d.$ Let us choose an affine coordinate system  $(x,y)$ such that $m=(0,0)$ and~$\ell_m=\{x=0\}.$ The foliation $\F$ is defined in these coordinates by a $1$-form $\omega$ of type
\begin{align*}
\hspace{0.5cm}&\omega=C_{d}(x,y)(x\mathrm{d}y-y\mathrm{d}x)+\sum_{i=1}^{d}\big(A_i(x,y)\mathrm{d}x+B_i(x,y)\mathrm{d}y\big),&&
\text{where }A_i,\,B_i\in\mathbb{C}[x,y]_{i},\hspace{1mm}C_{d}\in\mathbb{C}[x,y]_{d}.
\end{align*}
We have
\begin{align*}
\omega\wedge\mathrm{d}x\Big|_{x=0}=\sum_{i=1}^{d}B_i(0,y)\mathrm{d}y\wedge\mathrm{d}x=\sum_{i=1}^{d}B_i(0,1)y^{i}\mathrm{d}y\wedge\mathrm{d}x.
\end{align*}
Then the equality~$\Tang(\F,\ell_m,m)=d$ translates into $B_i(0,1)=0$ for $i\in\{1,2,\ldots,d-1\}$ and $B_d(0,1)\neq0.$ This allows to write
\begin{align*}
&B_1(x,y)=\alpha\hspace{0.2mm}x,&&
B_{d}(x,y)=x\widehat{B}_{d-1}(x,y)+\gamma\,y^d,&&
B_{i}(x,y)=x\widetilde{B}_{i-1}(x,y)\hspace{1mm}\text{for }i\in\{2,3,\ldots,d-1\},
\end{align*}
where $\widetilde{B}_{i-1}\in\mathbb{C}[x,y]_{i-1},\hspace{1mm}\widehat{B}_{d-1}\in\mathbb{C}[x,y]_{d-1},\hspace{1mm}\gamma\in\mathbb{C}^*,\hspace{1mm} \alpha\in\mathbb{C}.$ The equalities $\mu(\F,m)=1$ and $\mathrm{BB}(\F,m)=4$ imply that $\alpha\neq0$ and $A_1(x,y)=\delta\hspace{0.2mm}x-\alpha y$ for some $\delta\in\mathbb{C}.$ Thus $\omega$ is of type
\begin{align*}
\hspace{0.5cm}&\omega=\delta\hspace{0.2mm}x\mathrm{d}x+
\big(x\widehat{B}_{d-1}(x,y)+\gamma\,y^d\big)\mathrm{d}y+
\big(C_{d}(x,y)+\alpha\big)\big(x\mathrm{d}y-y\mathrm{d}x\big)+
\sum_{i=2}^{d}A_i(x,y)\mathrm{d}x+x\sum_{i=2}^{d-1}\widetilde{B}_{i-1}(x,y)\mathrm{d}y,
\end{align*}
where $A_i\in\mathbb{C}[x,y]_{i},\hspace{1mm}
\widetilde{B}_{i-1}\in\mathbb{C}[x,y]_{i-1},\hspace{1mm}
\widehat{B}_{d-1}\in\mathbb{C}[x,y]_{d-1},\hspace{1mm}
\delta\in\mathbb{C},\hspace{1mm}
\alpha,\gamma\in\mathbb{C}^*.$
\vspace{2mm}

\noindent By putting $\varphi=\left(\varepsilon^d\hspace{0.1mm}x,\varepsilon\hspace{0.1mm}y\right)$ and $\theta=\alpha(x\mathrm{d}y-y\mathrm{d}x)+\gamma\,y^d\mathrm{d}y$, we obtain
\begin{SMALL}
\begin{align*}
\frac{1}{\varepsilon^{d+1}}\varphi^*\omega=\theta+
\varepsilon^{d-1}\left(\delta\hspace{0.2mm}x\mathrm{d}x+x\widehat{B}_{d-1}(\varepsilon^{d-1}x,y)\mathrm{d}y\right)+
\varepsilon^{d}C_{d}(\varepsilon^{d-1}x,y)\left(x\mathrm{d}y-y\mathrm{d}x\right)+
\sum_{i=2}^{d}\varepsilon^{i-1}A_i(\varepsilon^{d-1}x,y)\mathrm{d}x+x\sum_{i=2}^{d-1}\varepsilon^{i-1}\widetilde{B}_{i-1}(\varepsilon^{d-1}x,y)\mathrm{d}y
\end{align*}
\end{SMALL}
\hspace{-1.17mm}which tends to $\theta$ as $\varepsilon$ tends to $0.$ By making the change of coordinates $\left(x,y\right)\mapsto\left(\frac{x}{y}-\frac{\gamma}{\alpha y},\frac{x}{y}\right)$, we reduce ourselves to~$\theta=\omega_{1}^{d}=y^{d}\mathrm{d}x+x^{d}(x\mathrm{d}y-y\mathrm{d}x)$. As a result $\F$ degenerates onto $\F_{1}^{d}.$
\end{proof}

\begin{eg}\label{eg:H1}
Let us consider the homogeneous foliation $\mathcal{H}_{1}^{d}$ defined in the affine chart $z=1$ by the $1$-form
$$\omegaoverline_{1}^{d}=y^d\mathrm{d}x-x^d\mathrm{d}y.$$
We know from~\cite[Proposition~4.1]{BM18} that $\mathcal{H}_{1}^{d}$ is convex and admits the points $[1:0:0]$ and $[0:1:0]$
as radial singularities of maximal order $d-1.$ Therefore $\mathcal{H}_{1}^{d}$ degenerates onto~$\F_{1}^{d}$ (Proposition~\ref{pro:cond-nécess-suff-dgnr-F1}) and it does not degenerate onto $\F_{2}^{d}$, because $\F_{2}^{d}$ is not convex. Thus, according to Remark~\ref{rem:adherence-orbite-homogene}, we have
\[
\hspace{1.5cm}\overline{\mathcal{O}(\mathcal{H}_{1}^{d})}=\mathcal{O}(\mathcal{H}_{1}^{d})\cup\mathcal{O}(\F_{1}^{d}).
\]
\end{eg}

\begin{eg}\label{eg:famille-G-gamma}
Let us consider the family $(\mathcal{G}^{d}(\gamma))_{\gamma\in\C}$ of foliations of degree  $d$ on $\pp$ defined in the affine chart  $z=1$ by
$$\eta^{d}(\gamma)=(x-\gamma\,y)\mathrm{d}y-y\mathrm{d}x+x^d\mathrm{d}x-y^d\mathrm{d}y.$$
We remark that the point $m=[0:0:1]$ is a non-degenerate singularity of  $\mathcal{G}^{d}(\gamma)$ with \textsc{Baum-Bott} index~$4.$ Moreover, along the line $\ell=\{y=0\}$ we have $\eta^{d}(\gamma)\wedge\mathrm{d}y\Big|_{y=0}=x^d\mathrm{d}x\wedge\mathrm{d}y,$ so that $\Tang(\mathcal{G}^{d}(\gamma),\ell,m)=d$. It follows that the singularity  $m$ of $\mathcal{G}^{d}(\gamma)$ is quasi-radial of maximal order $d-1.$ As a consequence $\mathcal{G}^{d}(\gamma)$ degenerates onto~$\F_1^d$ (Proposition~\ref{pro:cond-nécess-suff-dgnr-F1}).
\end{eg}

\noindent The converse of assertion~\textbf{\textit{2.}} of Proposition~\ref{pro:cond-nécess-suff-dgnr-F1} is false as the following example shows.
\begin{eg}\label{eg:sans-singularité-kappa=d}
Let $\F$ be the foliation of degree $d\geq2$ on $\pp$ defined in the  affine chart $z=1$ by
\begin{align*}
&\omega=x\mathrm{d}y-y\mathrm{d}x+P(y)\mathrm{d}y,
\end{align*}
where $P$ is a polynomial of $\C[y]$ of degree $d$ admitting  $0$ as a root of multiplicity $\leq d-1,$ {\it i.e.} $P$ is of the form
\begin{align*}
&P(y)=y^{\nu}(a_{0}+a_{1}y+\cdots+a_{d-\nu}y^{d-\nu}),&&
\text{where}\hspace{1mm}\nu\in\{1,2,\ldots,d-1\},\hspace{1mm}a_{i}\in\mathbb{C},\hspace{1mm}a_{0}a_{d-\nu}\neq0.
\end{align*}

\noindent The singular locus of  $\F$ consists of the two points  $m=[0:0:1]$ and $m'=[1:0:0]$; moreover
\begin{align*}
&\mu(\F,m)=1,&& \mathrm{BB}(\F,m)=4,&&\kappa(\F,m)=\nu<d,&& \mu(\F,m')>1.
\end{align*}
It follows that $\F$ has no quasi-radial singularity of maximal order $d-1,$ {\it i.e.}  $\mathrm{QRad}(\F,d-1)=\vide.$
However, $\F$ degenerates onto $\F_{1}^{d}$. Indeed, by putting
$\varphi=$\begin{small}$\left(\dfrac{a_{d-\nu}}{\varepsilon^d}x,\dfrac{1}{\varepsilon}y\right)$\end{small}, we see that
\begin{align*}
&\lim_{\varepsilon\to 0}\frac{\varepsilon^{d+1}}{a_{d-\nu}}\varphi^*\omega=x\mathrm{d}y-y\mathrm{d}x+y^{d}\mathrm{d}y.
\end{align*}
\end{eg}

\begin{ques}\label{ques:1}
{\sl Let $\F$ be a foliation of degree $d\geq 2$ on $\pp.$ Is it true that if $\F$ degenerates onto $\F_1^d$ then
\begin{itemize}
\item [--] either $\F$ admits a quasi-radial singularity of maximal order $d-1$,

\item [--] or $\F$ is conjugated to Example~\ref{eg:sans-singularité-kappa=d}, {\it i.e.} up to linear conjugation $\F$ is given by a $1$-form of type~\, $x\mathrm{d}y-y\mathrm{d}x+P(y)\mathrm{d}y$ with $P\in\C[y],$ $\deg P=d$ and $P(0)=0$?
\end{itemize}
}
\end{ques}

\begin{pro}\label{pro:ouvert-U1}
{\sl
Let $d$ be an integer greater than or equal to $2.$ Let us denote by $U_1(d)$ the subset of  $\mathbf{F}(d)$ defined by
\begin{align*}
&U_{1}(d):=\Big\{\F\in \mathbf{F}(d)\hspace{1mm}\vert\hspace{1mm}\forall\hspace{0.2mm}s\in\Sing(\F),\hspace{1mm}\mu(\F,s)=1,\tau(\F,s)=1\Big\}.
\end{align*}
Then:
\begin{itemize}
\item [\textit{(i)}] $U_1(d)$ is a non-empty \textsc{Zariski} open subset of~$\mathbf{F}(d);$ in particular, for any $\gamma\in\C,$ $\mathcal{G}^{d}(\gamma)\in U_1(d)$ if and only if $\gamma\left(\gamma^{d+1}+\dfrac{(d+1)^{d+1}}{d^d}\right)\neq0.$
\smallskip
\item [\textit{(ii)}] Let $\F$ be an element of  $U_1(d).$ For any singular point  $s\in\Sing(\F),$ the set~$$\Lambda(\F,s):=\Big\{\ell_s\in\mathfrak{L}_s(\F)\hspace{1mm}\vert\hspace{1mm} \Tang(\F,\ell_s,s)>1\Big\}$$ has at most $2$ elements. In particular, the set $\bigcup\limits_{n=2}^{d}\widehat{\mathrm{QRad}}(\F,n-1)$ is finite.
\end{itemize}
}
\end{pro}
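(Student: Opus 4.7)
The plan is to prove part~(i) by separately establishing Zariski openness and non-emptiness, with non-emptiness obtained by explicitly computing when $\mathcal{G}^{d}(\gamma)$ lies in $U_1(d)$. For openness, I use that every foliation in $\mathbf{F}(d)$ has isolated singularities with $\sum_{s\in\Sing(\F)}\mu(\F,s)=d^{2}+d+1$; hence the condition ``all singularities non-degenerate'' coincides with $\#\Sing(\F)=d^{2}+d+1$, which is manifestly Zariski open. Given $\mu(\F,s)=1$, the further condition $\tau(\F,s)=1$ is equivalent to the $1$-jet at $s$ of a local vector field defining $\F$ not being a scalar multiple of the radial vector field $\mathrm{R}_{s}$; the existence of some singularity violating this is a Zariski closed condition, so $U_1(d)$ is Zariski open.

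For the non-emptiness together with the precise description of the set $\{\gamma\in\C:\mathcal{G}^{d}(\gamma)\in U_1(d)\}$, I perform a direct computation. The affine singular locus is defined by $\{y=x^{d},\ x(1-\gamma x^{d-1}-x^{d^{2}-1})=0\}$ and consists of the origin together with $d^{2}-1$ further points indexed by the roots of $f(x):=1-\gamma x^{d-1}-x^{d^{2}-1}$. At the origin the Jacobian determinant equals~$1$, so $\mu=1$, while the linear part of the associated vector field is $(x-\gamma y)\partial_{x}+y\partial_{y}$, proportional to $\mathrm{R}_{(0,0)}$ precisely when $\gamma=0$. At a non-origin affine solution $(x_{0},x_{0}^{d})$, the off-diagonal Jacobian entry $-d\,x_{0}^{d-1}$ is non-zero, so the linear part cannot be radial; an elementary manipulation using $f(x_{0})=0$ shows that the Jacobian determinant vanishes if and only if $\gamma^{d+1}+\frac{(d+1)^{d+1}}{d^{d}}=0$. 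Finally, the chart change $(x,y)=(1/u,\,v/u)$ exhibits exactly $d+1$ singularities at infinity, located at $(u,v)=(0,v_{0})$ with $v_{0}^{d+1}=1$; linearizing there produces eigenvalues in ratio $d+1\neq 1$, so these singularities are non-degenerate and non-radial for every~$\gamma$. Combining the three contributions gives the announced equivalence and, in particular, $U_1(d)\neq\emptyset$.

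For~(ii), I would fix $s\in\Sing(\F)$ with $\mu(\F,s)=\tau(\F,s)=1$ and choose local affine coordinates centered at $s$. Writing $\omega=A\,\mathrm{d}x+B\,\mathrm{d}y$ locally and parametrizing a line $\ell$ through $s$ by $t\mapsto(tp,tq)$ with $(p,q)\in\mathbb{P}^{1}$, the pull-back of $\omega$ is $\bigl[p\,A(tp,tq)+q\,B(tp,tq)\bigr]\mathrm{d}t$; consequently $\Tang(\F,\ell,s)>1$ is equivalent to the vanishing of the binary quadratic form $Q(p,q):=p\,A_{1}(p,q)+q\,B_{1}(p,q)$, where $A_{1},B_{1}$ denote the linear parts of $A,B$. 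A short check shows that $Q\equiv 0$ would force the linear part of the associated vector field to be a scalar multiple of $\mathrm{R}_{s}$, contradicting $\tau(\F,s)=1$; therefore $Q$ is a non-zero binary form of degree~$2$ and has at most two roots in $\mathbb{P}^{1}$. This gives $|\Lambda(\F,s)|\leq 2$, and since $\#\Sing(\F)=d^{2}+d+1$, the set $\bigcup_{n=2}^{d}\widehat{\mathrm{QRad}}(\F,n-1)\subset\bigsqcup_{s\in\Sing(\F)}\{s\}\times\Lambda(\F,s)$ is finite. I expect the main technical difficulty to be the computation at infinity for $\mathcal{G}^{d}(\gamma)$: identifying the right chart change and carefully tracking the linearization of the resulting $1$-form to confirm that the $d+1$ points at infinity always remain in~$U_1(d)$, independently of $\gamma$.
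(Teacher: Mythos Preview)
Your proposal is correct and follows essentially the same approach as the paper. For part~(i) you carry out the same explicit computation on $\mathcal{G}^{d}(\gamma)$ (your chart $(x,y)=(1/u,v/u)$ is exactly the paper's chart $x=1$), and for part~(ii) your binary-quadratic argument is precisely the $\tau=1$ case of the paper's Lemma~\ref{lem:Tang-F-ell-s}, which it proves in general (giving $|\Lambda(\F,s)|\le\tau(\F,s)+1$) and then specializes; the only organizational difference is that you inline this special case rather than separating it out as a lemma.
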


\noindent To prove this proposition, we need the following lemma.
\begin{lem}\label{lem:Tang-F-ell-s}
{\sl
Let $\F$ be a foliation of degree $d\geq 2$ on $\pp,$ $s$ a singular point of $\F,$ $\ell_s$ a line passing through~$s$ and not invariant by $\F$ and  $\mathrm{X}=A(x,y)\frac{\partial{}}{\partial{x}}+B(x,y)\frac{\partial{}}{\partial{y}}$ a polynomial vector field defining $\F$ in an affine chart~$(x,y)$ containing $s.$ Let us denote by $(x_0,y_0)$ the coordinates of  $s$ and let $a(x-x_0)+b(y-y_0)=0$ be an equation of the line $\ell_s.$ Then, for any $n\in\{2,3,\ldots,d\},$ $\Tang(\F,\ell_s,s)\geq n$ if and only if
\begin{align*}
\frac{\mathrm{d}^j}{\mathrm{d}t^j}
\Big(a\hspace{0.2mm}A(x_0+b\hspace{0.2mm}t,y_0-a\hspace{0.2mm}t)+bB(x_0+b\hspace{0.2mm}t,y_0-a\hspace{0.2mm}t)\Big)\Big|_{t=0}=0,
\quad \forall\hspace{0.2mm}j\in\{1,2,\ldots,n-1\}.
\end{align*}
In particular, the set~$\Lambda(\F,s):=\Big\{\ell_s\in\mathfrak{L}_s(\F)\hspace{1mm}\vert\hspace{1mm} \Tang(\F,\ell_s,s)>\tau(\F,s)\Big\}$ is finite and its cardinality
is at~most~$\tau(\F,s)+1.$
}
\end{lem}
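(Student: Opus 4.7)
The plan is to translate $\Tang(\F,\ell_s,s)$ into an elementary order-of-vanishing statement along a parametrization of $\ell_s$, and then to use a degree count to bound the size of $\Lambda(\F,s)$. I parametrize $\ell_s$ by $t\mapsto(x_0+bt,\,y_0-at)$ and set $f(x,y):=a(x-x_0)+b(y-y_0)$, so that $\mathrm{X}(f)=aA+bB$. Since the preliminaries identify $\Tang(\F,\ell_s,s)$ with the intersection multiplicity at $s$ of $\{f=0\}$ and $\{\mathrm{X}(f)=0\}$, and since $\ell_s$ is smooth at~$s$, this multiplicity equals the order of vanishing at $t=0$ of
\[
\tilde g(t):=aA(x_0+bt,y_0-at)+bB(x_0+bt,y_0-at).
\]
Because $s\in\Sing\F$ forces $A(s)=B(s)=0$, we have $\tilde g(0)=0$ automatically, so the condition $\Tang(\F,\ell_s,s)\geq n$ translates verbatim into $\tilde g^{(j)}(0)=0$ for every $j\in\{1,\dots,n-1\}$, which proves the first assertion.

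For the second assertion I decompose $A=\sum_{k\geq 1}A_k$, $B=\sum_{k\geq 1}B_k$ into homogeneous parts in $(x-x_0,y-y_0)$, and expand
\[
\tilde g(t)=\sum_{k\geq 1}t^k\,P_k(a,b),\qquad P_k(a,b):=aA_k(b,-a)+bB_k(b,-a),
\]
each $P_k$ being homogeneous of degree $k+1$ in $(a,b)$. The key algebraic observation is that $P_k\equiv 0$ if and only if $(x-x_0)B_k-(y-y_0)A_k\equiv 0$, equivalently $(A_k,B_k)=h_{k-1}\cdot(x-x_0,y-y_0)$ for some homogeneous polynomial $h_{k-1}$ of degree $k-1$. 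I plan to prove this by specializing $a=0$ and $b=0$ successively in $P_k(a,b)=0$, which forces $A_k$ to be divisible by $(x-x_0)$ and $B_k$ by $(y-y_0)$, and then concluding that the cofactors coincide. Combined with the definition $\tau(\F,s)=\min\{k\geq 1\mid\det(J^k_s\mathrm X,\mathrm R_s)\not\equiv 0\}$ and the identity $\det(J^k_s\mathrm{X},\mathrm{R}_s)=\sum_{i\leq k}\bigl((x-x_0)B_i-(y-y_0)A_i\bigr)$, this pins $\tau(\F,s)$ down as the smallest index for which $P_\tau\not\equiv 0$, while $P_k\equiv 0$ for every $k<\tau$.

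The finiteness then becomes a one-line degree count. For a non-invariant line $\ell_s$ with direction $(b,-a)$, the first assertion combined with $P_k\equiv 0$ for $k<\tau$ reduces the condition $\Tang(\F,\ell_s,s)>\tau(\F,s)$ to the single homogeneous equation $P_\tau(a,b)=0$. As $P_\tau$ is a \emph{nonzero} homogeneous polynomial of degree $\tau+1$ in two variables, it has at most $\tau+1$ roots in $\mathbb{P}^1$; hence at most $\tau+1$ directions $[a:b]$ satisfy it, giving $\#\Lambda(\F,s)\leq\tau(\F,s)+1$.

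The main technical step I expect to require some care is the equivalence $P_k\equiv 0\iff(A_k,B_k)\propto(x-x_0,y-y_0)$ and its linkage to the definition of $\tau(\F,s)$ via $\det(J^k_s\mathrm X,\mathrm R_s)$; everything else is either a direct computation from the definitions or a Bezout-type root count. Once this equivalence is established, the bound on $\Lambda(\F,s)$, and in particular the finiteness of $\bigcup_n\widehat{\mathrm{QRad}}(\F,n-1)$ claimed in Proposition~\ref{pro:ouvert-U1}, follow immediately.
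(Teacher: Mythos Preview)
Your proposal is correct and follows essentially the same approach as the paper: both reduce the first assertion to the order of vanishing of $\tilde g(t)=aA(x_0+bt,y_0-at)+bB(x_0+bt,y_0-at)$, and both bound $\#\Lambda(\F,s)$ by observing that $\Tang>\tau$ amounts to the vanishing of a single nonzero homogeneous polynomial of degree $\tau+1$ in $(a,b)$. The only cosmetic difference is that the paper packages the vanishing of the low-order terms via the decomposition $\mathrm{X}=C_{\tau-2}\mathrm{R}_s+\sum_{i\geq\tau}\mathrm{X}_i$ and the one-line identity $P_k(a,b)=-\det(\mathrm{X}_k,\mathrm{R}_s)\big|_{(x,y)=(b,-a)}$, which gives your ``key algebraic observation'' immediately without specializing $a=0$, $b=0$.
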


\begin{proof}
The  $1$-form $\omega=A(x,y)\mathrm{d}y-B(x,y)\mathrm{\mathrm{d}}x$ also defines the foliation $\F$ because $i_\mathrm{X}\omega=0.$ We have
\begin{align*}
\omega\wedge\mathrm{d}\big(a(x-x_0)+b(y-y_0)\big)\Big|_{(x,y)=(x_0+b\hspace{0.2mm}t,y_0-a\hspace{0.2mm}t)}=P(t)\mathrm{d}y\wedge\mathrm{d}x,
\end{align*}
where $P(t)=a\hspace{0.2mm}A(x_0+b\hspace{0.2mm}t,y_0-a\hspace{0.2mm}t)+bB(x_0+b\hspace{0.2mm}t,y_0-a\hspace{0.2mm}t)$. Thus $\Tang(\F,\ell_s,s)=\nu(P(t),0).$ Notice~that $P(0)=~\hspace{-0.5mm}0$ because the point $s$ being singular for $\F,$ we have $A(x_0,y_0)=B(x_0,y_0)=0.$ Then $\Tang(\F,\ell_s,s)\geq n$ if and only if the root $t=0$ of the polynomial $P$ has multiplicity at least $n$, that is if and only if $P'(0)=P''(0)=\cdots=P^{(n-1)}(0)=0$, hence the announced equivalence holds.
\medskip

\noindent By conjugating $\omega$ by the translation $(x+x_0,y+y_0)$, we can assume that $s=(0,0)$. Let us denote $\tau(\F,s)$ simply by $\tau$. Then the vector field $\X$ decomposes in the form
\begin{align*}
\X=C_{\tau-2}(x,y)\mathrm{R}+\sum_{i=\tau}^{d+1}\mathrm{X}_i,
\end{align*}
where $\mathrm{R}=x\frac{\partial{}}{\partial{x}}+y\frac{\partial{}}{\partial{y}}$, $C_{\tau-2}$ is a polynomial of degree~$\leq \tau-2,$ $\mathrm{X}_i=A_i(x,y)\frac{\partial{}}{\partial{x}}+B_i(x,y)\frac{\partial{}}{\partial{y}}$ is a homogeneous vector field of degree  $i$, with $\det(\X_{\tau},\mathrm{R})\not\equiv0.$ Thus, we have
\begin{small}
\begin{align*}
a\hspace{0.2mm}A(b\hspace{0.2mm}t,-a\hspace{0.2mm}t)+bB(b\hspace{0.2mm}t,-a\hspace{0.2mm}t)
&=\bigg(a\Big(xC_{\tau-2}(x,y)+\sum_{i=\tau}^{d+1}A_i(x,y)\Big)+b\Big(yC_{\tau-2}(x,y)+\sum_{i=\tau}^{d+1}B_i(x,y)\Big)\bigg)
\bigg|_{(x,y)=(b\hspace{0.2mm}t,-a\hspace{0.2mm}t)}\\
&=\sum_{i=\tau}^{d+1}\Big(a\hspace{0.2mm}A_i(b\hspace{0.2mm}t,-a\hspace{0.2mm}t)+bB_i(b\hspace{0.2mm}t,-a\hspace{0.2mm}t)\Big)\\
&=\sum_{i=\tau}^{d+1}t^iQ_{i+1}(a,b),
\end{align*}
\end{small}
\hspace{-1.2mm}where $Q_{i+1}(a,b):=a\hspace{0.2mm}A_i(b,-a)+bB_i(b,-a)$ is a homogeneous polynomial of degree $i+1$ in $(a,b).$ From~this, we deduce that $\Tang(\F,\ell_s,s)>\tau$ if and only if $Q_{\tau+1}(a,b)=0.$ As a result
\begin{align*}
\Lambda(\F,s)=\Big\{\ell_s=\{ax+by=0\}\in\mathfrak{L}_s(\F)\hspace{1mm}\vert\hspace{1mm} Q_{\tau+1}(a,b)=0\Big\}.
\end{align*}
Now, the polynomial  $Q_{\tau+1}$ is not identically zero because $Q_{\tau+1}(a,b)=-\det(\X_{\tau},\mathrm{R})\big|_{(x,y)=(b,-a)}\not\equiv0.$ It follows that $\Lambda(\F,s)$ has cardinality at most $\tau+1.$
\end{proof}

\begin{proof}[\sl Proof of Proposition~\ref{pro:ouvert-U1}] We have
\begin{align*}
&U_{1}(d)=\Big\{
\F\in \mathbf{F}(d)
\hspace{1mm}\vert\hspace{1mm}\forall\hspace{0.2mm}s\in\Sing(\F),
\hspace{1mm}
\det(\mathrm{Jac}\hspace{0.1mm}\mathrm{X}(s))\neq0,
\hspace{1mm}
\det(J^{1}_{s}\mathrm{X},\mathrm{R}_{s})\not\equiv0
\Big\},
\end{align*}
where $\mathrm{X}$ denotes a polynomial vector field defining $\F$ in an affine chart containing $s$ and $\mathrm{R}_s$ is the radial vector field centered at $s.$ It~follows that $U_{1}(d)$ is a \textsc{Zariski} open subset of $\mathbf{F}(d).$
To establish assertion~\textit{(i)}, it~remains to show that for any $\gamma\in\C,$ $\mathcal{G}^{d}(\gamma)\in U_1(d)$ if and only if $\gamma\left(\gamma^{d+1}+\frac{(d+1)^{d+1}}{d^d}\right)\neq0.$ In homogeneous coordinates, the foliation
 $\mathcal{G}^{d}(\gamma)$ is defined by the $1$-form
\begin{align*}
&\hspace{1cm}
\Omega^{d}(\gamma)=
z\big(x^d-yz^{d-1}\big)\mathrm{d}x-z\big(y^d+\gamma\,yz^{d-1}-xz^{d-1}\big)\mathrm{d}y+\big(y^{d+1}-x^{d+1}+\gamma\,y^2z^{d-1}\big)\mathrm{d}z\hspace{0.5mm}.
\end{align*}
The singular locus $\Sing\big(\mathcal{G}^{d}(\gamma)\big)$ consists of the points
\begin{align*}
&s_{0}=[0:0:1],&& s_{k}=[x_{k}:x_{k}^{d}:1],&& s'_{l}=[1:\xi^{l}:0],&& k\in\{1,2,\ldots,d^2-1\},\, l\in\{0,1,\ldots,d\},
\end{align*}
where $\xi=\exp\hspace{-0.4mm}\left(\frac{2\mathrm{i}\pi}{d+1}\right)$ and the $x_k$'s are the roots of the polynomial $P(x)=x^{d^2-1}+\gamma\,x^{d-1}-1.$

\noindent In the affine chart $z=1,$ resp. $x=1,$ $\mathcal{G}^{d}(\gamma)$ is given by the vector field
\begin{align*}
&\mathrm{Y}=\big(y^d+\gamma\,y-x\big)\frac{\partial{}}{\partial{x}}+\big(x^d-y\big)\frac{\partial{}}{\partial{y}},&&
\text{resp. } \mathrm{Z}=\big(y^{d+1}+\gamma\,y^2z^{d-1}-1\big)\frac{\partial{}}{\partial{y}}+z\big(y^d+\gamma\,yz^{d-1}-z^{d-1}\big)\frac{\partial{}}{\partial{z}}\hspace{0.5mm}.
\end{align*}
A direct computation show that
\begin{small}$\det(\mathrm{Jac}\hspace{0.1mm}\mathrm{Y}(s_0))=1\neq0$\end{small}, \begin{small}$\det(J^{1}_{s_0}\mathrm{Y},\mathrm{R}_{s_0})=\gamma\,y^2$\end{small} and
\begin{Small}
\begin{align*}
&
\det(\mathrm{Jac}\hspace{0.1mm}\mathrm{Z}(s'_l))=(d+1)\xi^{-2l}\neq0,&&
\det(\mathrm{Jac}\hspace{0.1mm}\mathrm{Y}(s_k))=1-d\gamma\,x_k^{d-1}-d^2x_k^{d^2-1}=(d-1)\big(d\gamma\,x_k^{d-1}-d-1\big),
\hspace{1mm}\text{\normalsize because}\hspace{1mm}P(x_k)=0,\\
&
\det(J^{1}_{s'_l}\mathrm{Z},\mathrm{R}_{s'_l})=d\xi^{-l}\big(y-\xi^{l}\big)z\not\equiv0,&&
\det(J^{1}_{s_k}\mathrm{Y},\mathrm{R}_{s_k})=\big(dx_k^{d^2-d}+\gamma\big)\big(y-x_k^d\big)^2-dx_k^{d-1}\big(x-x_k\big)^2\not\equiv0,
\hspace{1mm}\text{\normalsize because}\hspace{1mm}x_k\neq0.
\end{align*}
\end{Small}
\hspace{-1mm}From these we deduce that $\mathcal{G}^{d}(\gamma)\in U_1(d)$ if and only if  $\gamma\neq0$ and $d\gamma\,x_k^{d-1}-d-1\neq0$, {\it i.e.} if and only if $\gamma\neq0$ and $x_k^{d-1}\neq\frac{d+1}{d\gamma}.$
Now, by putting $Q(t)=t^{d+1}+\gamma\,t-1,$ we have $P(x)=Q(x^{d-1})$ so that $t_0\in\C$ is a root of the polynomial $Q(t)$ if and only if there exists $k\in\{1,2,\ldots,d^2-1\}$ such that $t_0=x_k^{d-1}.$ It follows that
\begin{align*}
\mathcal{G}^{d}(\gamma)\in U_1(d)\Longleftrightarrow
\gamma\,Q\left(\frac{d+1}{d\gamma}\right)\neq0\Longleftrightarrow
\gamma\left(\gamma^{d+1}+\frac{(d+1)^{d+1}}{d^d}\right)\neq0.
\end{align*}

\noindent Assertion~\textit{(ii)} is an immediate consequence of Lemma~\ref{lem:Tang-F-ell-s}.
\end{proof}

\begin{thm}\label{thm:bassin-attraction-F1}
{\sl Let $d$ be an integer greater than or equal to $2.$ Let us denote by $\Sigma_{1}(d)$ the subset of $\mathbf{F}(d)$ defined by
\begin{align*}
&\Sigma_{1}(d):=\Big\{\F\in \mathbf{F}(d)\hspace{1mm}\vert\hspace{1mm}\mathrm{QRad}(\F,d-1)\neq\vide\Big\}.
\end{align*}
Then
\begin{itemize}
\item [\textit{(a)}] $\emptyset\neq \Sigma_{1}(d)\varsubsetneq \mathbf{B}(\F_{1}^{d});$
\smallskip
\item [\textit{(b)}] $\Sigma_{1}(d)$ is a constructible subset of~$\mathbf{F}(d)$ of dimension greater than or equal to $\dim\mathbf{F}(d)-(d-1).$
\end{itemize}
}
\end{thm}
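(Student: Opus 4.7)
For part~\textit{(a)}, I would first note that $\Sigma_1(d)$ is non-empty because it already contains $\F_{1}^{d}$ itself (whose radial singularity $[0:1:0]$ of order $d-1$ is in particular quasi-radial, by the remark following the definition of quasi-radiality), and it also contains all the foliations $\mathcal{G}^{d}(\gamma)$ from Example~\ref{eg:famille-G-gamma}. The inclusion $\Sigma_1(d)\subset \mathbf{B}(\F_1^d)$ is immediate from assertion~\textit{\textbf{2}} of Proposition~\ref{pro:cond-n�cess-suff-dgnr-F1}: either $\F\in\mathcal O(\F_1^d)$, or $\F$ degenerates onto $\F_1^d$; in both cases $\F_1^d\in\overline{\mathcal O(\F)}$. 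Strictness of this inclusion is witnessed by Example~\ref{eg:sans-singularit�-kappa=d}, which exhibits a foliation in $\mathbf{B}(\F_1^d)\setminus\Sigma_1(d)$.

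For part~\textit{(b)}, the strategy is to pass to the natural incidence variety
\[
\widehat{\Sigma}_{1}(d):=\bigl\{(\F,s,\ell)\in\mathbf{F}(d)\times\pp\times\pd\;\big|\;(s,\ell)\in\widehat{\mathrm{QRad}}(\F,d-1)\bigr\}.
\]
This is a constructible subset of $\mathbf{F}(d)\times\pp\times\pd$ defined by algebraic (in)equations (singularity condition, $\mu=1$, $\mathrm{BB}=4$, $\Tang(\F,\ell,s)=d$, $s\in\ell$), and $\Sigma_1(d)$ is its image under the first projection $\pi$, hence constructible by Chevalley's theorem.

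To estimate the dimension, I would exploit the transitivity of $\mathrm{Aut}(\pp)$ on the $3$-dimensional variety of flags $\{(s,\ell)\colon s\in\ell\}$ to reduce to fixing $s_0=[0:0:1]$ and $\ell_0=\{x=0\}$. The normal form produced in the proof of Proposition~\ref{pro:cond-n�cess-suff-dgnr-F1} \textit{\textbf{2}} then exhibits every foliation in the fiber $\pi_{(s_0,\ell_0)}^{-1}$ as
\[
\omega=\delta\hspace{0.2mm}x\mathrm{d}x+\bigl(x\widehat{B}_{d-1}+\gamma y^d\bigr)\mathrm{d}y+\bigl(C_d+\alpha\bigr)(x\mathrm{d}y-y\mathrm{d}x)+\sum_{i=2}^{d}A_i\mathrm{d}x+x\sum_{i=2}^{d-1}\widetilde{B}_{i-1}\mathrm{d}y,
\]
with $\alpha,\gamma\in\C^{*}$ and the remaining pieces running over the appropriate spaces of homogeneous polynomials. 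A routine bookkeeping of coefficients, taking into account projectivisation, shows that this fiber has dimension $d^2+3d$. Sweeping by $\mathrm{Aut}(\pp)$ produces an irreducible constructible subvariety $V\subset\widehat{\Sigma}_1(d)$ of dimension $d^2+3d+3=\dim\mathbf{F}(d)-(d-1)$.

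Finally, I would show that $\dim\pi(V)=\dim V$. For this, by Proposition~\ref{pro:ouvert-U1}~\textit{(i)} the foliations $\mathcal{G}^{d}(\gamma)$ lie in $U_1(d)\cap\Sigma_1(d)$ for all but finitely many $\gamma$, so $\pi(V)$ meets the Zariski open set $U_1(d)$; by Proposition~\ref{pro:ouvert-U1}~\textit{(ii)}, over $U_1(d)$ the set $\widehat{\mathrm{QRad}}(\F,d-1)$ is finite, hence so is the generic fiber of $\pi|_V$. Consequently $\dim\Sigma_1(d)\geq\dim\pi(V)=\dim V=\dim\mathbf{F}(d)-(d-1)$, completing part~\textit{(b)}. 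The main delicate points I anticipate are the parameter count in the normal form (one must track the linear conditions $B_i(0,1)=0$ for $i<d$ and the single additional linear condition coming from $\mathrm{BB}=4$) and the verification that the component $V$ of $\widehat{\Sigma}_1(d)$ meets $U_1(d)$, so that the finite-fiber argument applies precisely to the component of maximal dimension.
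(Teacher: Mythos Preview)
Your proposal is correct and shares the paper's overall architecture for part~\textit{(b)}: both introduce the incidence variety $\widehat{\Sigma}_1(d)=W_1(d)\subset\mathbf{F}(d)\times\pp\times\pd$, obtain constructibility of $\Sigma_1(d)=\pi(W_1(d))$ via Chevalley, and then use the finite-fiber argument over $U_1(d)$ (Proposition~\ref{pro:ouvert-U1}) to transfer the dimension estimate from $W_1(d)$ down to $\Sigma_1(d)$. For part~\textit{(a)} the arguments are essentially identical; you cite $\F_1^d$ itself (via its radial singularity $[0:1:0]$) where the paper cites $\mathcal{H}_1^d$, but either example works.

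The one genuine difference lies in how the dimension of $W_1(d)$ is bounded. The paper writes $W_1(d)$ explicitly (via Lemma~\ref{lem:Tang-F-ell-s}) as the locus of $d+3$ closed equations and several open conditions inside $\mathbf{F}(d)\times\pp\times\pd$, and invokes the principal-ideal type bound: every irreducible component of $W_1(d)$ has dimension at least $\dim(\mathbf{F}(d)\times\pp\times\pd)-(d+3)=\dim\mathbf{F}(d)-(d-1)$. It must then argue carefully that \emph{some} such component $W_1^n(d)$ dominates a component $\Sigma_1^0(d)$ of $\Sigma_1(d)$ meeting $U_1(d)$. Your approach instead projects $\widehat{\Sigma}_1(d)$ onto the $3$-dimensional flag variety and uses $\mathrm{Aut}(\pp)$-equivariance to identify every fiber with the one over $(s_0,\ell_0)$; the normal form from the proof of Proposition~\ref{pro:cond-n�cess-suff-dgnr-F1} then parametrizes that fiber by a non-empty open subset of an affine space, and the count $d^2+3d$ is correct. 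This buys you a bit more: $\widehat{\Sigma}_1(d)$ comes out \emph{irreducible} of dimension exactly $d^2+3d+3$, so there is only one component and the matching-of-components step in the paper's proof becomes unnecessary. Conversely, the paper's codimension argument is quicker and requires no detailed parameter bookkeeping.
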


\begin{proof}
\textit{(a)} $\Sigma_{1}(d)$ contains the foliations $\mathcal{H}_{1}^{d}$ and $\mathcal{G}^{d}(\gamma),\gamma\in\C$ (Examples~\ref{eg:H1} and \ref{eg:famille-G-gamma}) and is therefore non-empty. Assertion~\textbf{\textit{2.}} of Proposition~\ref{pro:cond-nécess-suff-dgnr-F1} ensures that  $\Sigma_{1}(d)\subset\mathbf{B}(\F_{1}^{d})$; this  inclusion is strict as  Example~\ref{eg:sans-singularité-kappa=d} shows.
\medskip

\textit{(b)} Let us denote by $\pd$ the dual projective plane of $\pp.$ Let $\pi\hspace{1mm}\colon\mathbf{F}(d)\times\pp\times\pd\to\mathbf{F}(d)$ be the projection onto the first factor; we have $\Sigma_{1}(d)=\pi(W_{1}(d)),$ where
\begin{small}
\begin{align*}
W_{1}(d):&=\bigcup_{\F\in\Sigma_{1}(d)}\{\F\}\times\widehat{\mathrm{QRad}}(\F,d-1)\\
&=\Big\{
(\F,s,\ell)\in\mathbf{F}(d)\times\pp\times\pd\hspace{1mm}\vert\hspace{1mm}
s\in\Sing(\F),\hspace{1mm}\ell\in\mathfrak{L}_s(\F),\hspace{1mm}\mu(\F,s)=1,\hspace{1mm}\mathrm{BB}(\F,s)=4,\hspace{1mm} \Tang(\F,\ell,s)=d
\Big\}.
\end{align*}
\end{small}
\hspace{-1mm}According to Lemma~\ref{lem:Tang-F-ell-s}, $W_{1}(d)$ can be rewritten as
\begin{Small}
\begin{align}\label{equa:W1(d)}
W_{1}(d)=\left\{
(\F,s,\ell)\in\mathbf{F}(d)\times\pp\times\pd
\hspace{0.5mm}\left\vert
\begin{array}[c]{l}
\vspace{2mm}
s=(x_0,y_0)\in\ell=\{a(x-x_0)+b(y-y_0)=0\}\\
\vspace{2mm}
A(x_0,y_0)=0,
\hspace{1mm}
B(x_0,y_0)=0,
\hspace{1mm}
\det(\mathrm{Jac}\hspace{0.1mm}\mathrm{X}(s))\neq0,
\hspace{1mm}
\dfrac{\mathrm{tr}^{2}(\mathrm{Jac}\hspace{0.1mm}\mathrm{X}(s))}{\det(\mathrm{Jac}\hspace{0.1mm}\mathrm{X}(s))}=4\\
\vspace{2mm}
a\hspace{0.2mm}A(x_0+b\hspace{0.2mm}t,y_0-a\hspace{0.2mm}t)+bB(x_0+b\hspace{0.2mm}t,y_0-a\hspace{0.2mm}t)\not\equiv0\\
\dfrac{\mathrm{d}^j}{\mathrm{d}t^j}
\Big(a\hspace{0.2mm}A(x_0+b\hspace{0.2mm}t,y_0-a\hspace{0.2mm}t)+bB(x_0+b\hspace{0.2mm}t,y_0-a\hspace{0.2mm}t)\Big)\Big|_{t=0}=0,j=1,\ldots,d-1
\end{array}
\right.
\right\},
\end{align}
\end{Small}
\hspace{-1mm}where $\mathrm{X}=A(x,y)\frac{\partial{}}{\partial{x}}+B(x,y)\frac{\partial{}}{\partial{y}}$ is a polynomial vector field defining $\F$ in an affine chart $(x,y)$ containing $s.$ It~follows that $W_{1}(d)$ is a quasi-projective subvariety of  $\mathbf{F}(d)\times\pp\times\pd.$ Thus, by \textsc{Chevalley}'s~Theorem \cite[Exercise~II.3.19]{Har77}, the set $\Sigma_{1}(d)=\pi(W_{1}(d))$ is constructible.
\medskip

\noindent According to the above discussion and Proposition~\ref{pro:ouvert-U1}~\textit{(i)}, the intersection $U_1(d)\cap\Sigma_{1}(d)$ contains the foliations $\mathcal{G}^{d}(\gamma)$, with $\gamma\left(\gamma^{d+1}+\frac{(d+1)^{d+1}}{d^d}\right)\neq0,$ and is therefore non-empty ($U_1(d)$ being the set of $\F\in\mathbf{F}(d)$ such that for any $s\in\Sing\F,$ $\mu(\F,s)=1$ and $\tau(\F,s)=1$). Then there exists an irreducible component $\Sigma_{1}^{0}(d)$ of $\Sigma_{1}(d)$ such that $U_1(d)\cap\Sigma_{1}^{0}(d)\neq\vide.$ Let $W_{1}(d)=\bigcup\limits_{i=1}^{k}W_{1}^{i}(d)$ be the decomposition of $W_{1}(d)$ into its irreducible components. Let us denote by $\pi_0\hspace{1mm}\colon W_{1}(d)\to\mathbf{F}(d)$ the~restriction of $\pi$ to~$W_{1}(d).$ Then, there is $n\in\{1,\ldots,k\}$ such that $\overline{\pi_0(W_{1}^{n}(d))}=\overline{\Sigma_{1}^{0}(d)}.$ Indeed, since  $\Sigma_{1}(d)=\pi_0(W_{1}(d)),$ we have $\overline{\Sigma_{1}^{0}(d)}\subset\overline{\Sigma_{1}(d)}=\bigcup\limits_{i=1}^{k}\overline{\pi_0(W_{1}^{i}(d))}.$ The irreducibility of $\Sigma_{1}^{0}(d)$ therefore ensures the existence of~$n\in\{1,\ldots,k\}$ such that $\overline{\Sigma_{1}^{0}(d)}\subset\overline{\pi_0(W_{1}^{n}(d))}\subset\overline{\Sigma_{1}(d)}.$ Since $\overline{\Sigma_{1}^{0}(d)}$ is an irreducible component of~$\overline{\Sigma_{1}(d)}$ and since $\overline{\pi_0(W_{1}^{n}(d))}$ is irreducible by continuity of $\pi_0,$ we deduce that $\overline{\pi_0(W_{1}^{n}(d))}=\overline{\Sigma_{1}^{0}(d)}.$

\noindent Thus, since $U_1(d)$ is a \textsc{Zariski} open subset of $\mathbf{F}(d)$ (Proposition~\ref{pro:ouvert-U1}~\textit{(i)}), the morphism $\pi_0$ induces by restriction a dominant morphism of quasi-projective varieties $\pi_{0}^{n}\hspace{1mm}\colon W_{1}^{n}(d)\cap\pi_{0}^{-1}(U_1(d))\to\overline{\Sigma_{1}^{0}(d)}\cap U_1(d).$

\noindent Notice that all the fibers of $\pi_0$ over the elements of  $U_1(d)\cap\Sigma_{1}(d)$ are finite and non-empty. Indeed, if~$\F\in U_1(d)\cap\Sigma_{1}(d)$ then, by Proposition~\ref{pro:ouvert-U1}~\textit{(ii)}, the set~$\widehat{\mathrm{QRad}}(\F,d-1)$ is finite and non-empty; therefore so~is~$\pi_{0}^{-1}(\F)=\{\F\}\times\widehat{\mathrm{QRad}}(\F,d-1).$ Since $\pi_{0}(W_{1}^{n}(d)\cap\pi_{0}^{-1}(U_1(d)))\subset U_1(d)\cap\Sigma_{1}(d),$ it follows that all the non-empty fibers of $\pi_{0}^{n}$ are finite and therefore zero-dimensional. The fiber dimension theorem (\emph{cf.} \cite[Theorem~3, page 49]{Mum88}) then implies that $\dim (W_{1}^{n}(d)\cap\pi_{0}^{-1}(U_1(d)))=\dim (\overline{\Sigma_{1}^{0}(d)}\cap U_1(d));$ since $W_{1}^{n}(d)\cap\pi_{0}^{-1}(U_1(d))$ and $\overline{\Sigma_{1}^{0}(d)}\cap U_1(d)$ are non-empty open subsets of the irreducible varieties  $W_{1}^{n}(d)$ and~$\overline{\Sigma_{1}^{0}(d)}$~respectively, we have
\begin{align*}
\hspace{2.3cm}\dim\overline{\Sigma_{1}^{0}(d)}=\dim (\overline{\Sigma_{1}^{0}(d)}\cap U_1(d))=\dim (W_{1}^{n}(d)\cap\pi_{0}^{-1}(U_1(d)))=\dim W_{1}^{n}(d).
\end{align*}
Now, from~(\ref{equa:W1(d)}) we deduce that each irreducible component  $W_{1}^{i}(d)$ of $W_{1}(d)$ has dimension
\[\hspace{1.45cm}\dim W_{1}^{i}(d)\geq \dim(\mathbf{F}(d)\times\pp\times\pd)-4-(d-1)=\dim\mathbf{F}(d)-(d-1),\]
hence
\[\hspace{1.57cm}\dim\Sigma_{1}(d)=\dim\overline{\Sigma_{1}(d)}\geq\dim\overline{\Sigma_{1}^{0}(d)}=\dim W_{1}^{n}(d)\geq\dim\mathbf{F}(d)-(d-1).\]
\end{proof}

\noindent Assertion~\textbf{\textit{1.}} (resp. \textbf{\textit{2.}}) of the following proposition gives a necessary (resp. sufficient) condition for a~foliation of~$\mathbf{F}(d)$ to degenerate onto the foliation~$\F_{2}^{d}.$
\begin{pro}\label{pro:cond-nécess-suff-dgnr-F2}
{\sl Let $\F$ be an element of $\mathbf{F}(d)$ such that $\F_{2}^{d}\not\in\mathcal{O}(\F).$ The following assertions hold:

\noindent\textbf{\textit{1.}} If $\F$ degenerates onto $\F_{2}^{d}$, then $\deg\ItrF\geq d-1.$

\noindent\textbf{\textit{2.}} If $\F$ admits an inflection point of maximal order $d-1$, {\it i.e.} if $\mathrm{Flex}(\F,d-1)\neq\emptyset,$ then $\F$ degenerates onto~$\F_{2}^{d}.$
}
\end{pro}

\begin{proof}
\textbf{\textit{1.}} If $\F$ degenerates onto $\F_{2}^{d},$ then $\deg\ItrF\geq\deg\mathrm{I}_{\F_{2}^{d}}^{\hspace{0.2mm}\mathrm{tr}}.$ An immediate computation shows that $\mathrm{I}_{\F_{2}^{d}}^{\hspace{0.2mm}\mathrm{tr}}=~\hspace{-0.5mm}y^{d-1}$ so that $\deg\mathrm{I}_{\F_{2}^{d}}^{\hspace{0.2mm}\mathrm{tr}}=d-1$, hence the announced inequality holds.
\smallskip

\noindent\textbf{\textit{2.}} Assume that  $\F$ possesses such a point.
We choose an affine coordinate system $(x,y)$ such that $p=(0,0)$ is an inflection point of order  $d-1$ of $\F$ and $x=0$ is the tangent line to the leaf of  $\F$ passing through $p.$ Let~$\omega$~be~a~$1$-form defining  $\F$ in these coordinates. Since $\mathrm{T}^{\mathbb{P}}_{p}\F=\{x=0\}$, $\omega$ is of type
\begin{align*}
&\omega=C_{d}(x,y)(x\mathrm{d}y-y\mathrm{d}x)+\alpha\mathrm{d}x+\sum_{i=1}^{d}\big(A_i(x,y)\mathrm{d}x+B_i(x,y)\mathrm{d}y\big),&&
\text{where }A_i,\,B_i\in\mathbb{C}[x,y]_{i},\hspace{1mm}C_{d}\in\mathbb{C}[x,y]_{d},\hspace{1mm}\alpha\in\mathbb{C}^*.
\end{align*}
We have
\begin{align*}
\omega\wedge\mathrm{d}x\Big|_{x=0}=\sum_{i=1}^{d}B_i(0,y)\mathrm{d}y\wedge\mathrm{d}x=\sum_{i=1}^{d}B_i(0,1)y^{i}\mathrm{d}y\wedge\mathrm{d}x.
\end{align*}
Therefore the hypothesis that $(0,0)$ is an inflection point of order $d-1$ of $\F$ translates into $B_i(0,1)=0$ for~$i\in\{1,2,\ldots,d-1\}$ and $B_d(0,1)\neq0.$ Then we can write
\begin{align*}
&B_{d}(x,y)=x\widehat{B}_{d-1}(x,y)+\beta y^d,&&
B_{i}(x,y)=x\widetilde{B}_{i-1}(x,y)\hspace{1mm}\text{for }i\in\{1,2,\ldots,d-1\},
\end{align*}
where $\widetilde{B}_{i-1}\in\mathbb{C}[x,y]_{i-1},\hspace{1mm}
\widehat{B}_{d-1}\in\mathbb{C}[x,y]_{d-1},\hspace{1mm}
\beta\in\mathbb{C}^*.$
Thus $\omega$ is of type
\begin{align*}
\hspace{0.5cm}&\omega=\alpha\mathrm{d}x+
\big(x\widehat{B}_{d-1}(x,y)+\beta y^d\big)\mathrm{d}y+
C_{d}(x,y)\big(x\mathrm{d}y-y\mathrm{d}x\big)+
\sum_{i=1}^{d}A_i(x,y)\mathrm{d}x+x\sum_{i=1}^{d-1}\widetilde{B}_{i-1}(x,y)\mathrm{d}y,
\end{align*}
where $A_i\in\mathbb{C}[x,y]_{i},\hspace{1mm}
\widetilde{B}_{i-1}\in\mathbb{C}[x,y]_{i-1},\hspace{1mm}
\widehat{B}_{d-1}\in\mathbb{C}[x,y]_{d-1},\hspace{1mm}
\alpha,\beta\in\mathbb{C}^*.$
\vspace{2mm}

\noindent Let us consider the family of automorphisms $\varphi=\varphi_{\varepsilon}=(\varepsilon^{d+1}x,\varepsilon y).$ We have
\begin{small}
\begin{align*}
\frac{1}{\varepsilon^{d+1}}\varphi^*\omega=\alpha\mathrm{d}x+
\Big(\varepsilon^{d}x\widehat{B}_{d-1}(\varepsilon^{d}x,y)+\beta y^{d}\Big)\mathrm{d}y+
\varepsilon^{d+1}C_{d}(\varepsilon^{d}x,y)\left(x\mathrm{d}y-y\mathrm{d}x\right)+
\sum_{i=1}^{d}\varepsilon^{i}A_i(\varepsilon^{d}x,y)\mathrm{d}x+
x\sum_{i=1}^{d-1}\varepsilon^{i}\widetilde{B}_{i-1}(\varepsilon^{d}x,y)\mathrm{d}y
\end{align*}
\end{small}
\hspace{-1mm}which tends to  $\alpha\mathrm{d}x+\beta y^{d}\mathrm{d}y$ as $\varepsilon$ tends to $0.$ Clearly $\alpha\mathrm{d}x+\beta y^{d}\mathrm{d}y$ defines a foliation conjugated to  $\F_{2}^{d}$; as a result $\F$ degenerates onto $\F_{2}^{d}.$
\end{proof}

\begin{eg}\label{eg:H2}
Let us consider the homogeneous foliation $\mathcal{H}_{2}^{d}$ defined in the affine chart  $z=1$ by the $1$-form
$$\omegaoverline_{2}^{d}=x^d\mathrm{d}x-y^d\mathrm{d}y.$$
We know from~\cite[Proposition~4.1]{BM18} that $\mathcal{H}_{2}^{d}$ has no non-degenerate singularity with  \textsc{Baum-Bott} index~$4$ and that
\[
\hspace{4.34cm}\mathrm{Flex}(\mathcal{H}_{2}^{d},d-1)=\{xy=0\}\setminus\{[0:0:1]\}\neq\vide.
\]
Thus $\mathcal{H}_{2}^{d}$ degenerates onto~$\F_{2}^{d}$~(Proposition~\ref{pro:cond-nécess-suff-dgnr-F2}) and it does not degenerate onto $\F_{1}^{d}$~(Proposition~\ref{pro:cond-nécess-suff-dgnr-F1}). Consequently, according to Remark~\ref{rem:adherence-orbite-homogene}, we have
\[
\hspace{1.5cm}\overline{\mathcal{O}(\mathcal{H}_{2}^{d})}=\mathcal{O}(\mathcal{H}_{2}^{d})\cup\mathcal{O}(\F_{2}^{d}).
\]
\end{eg}

\begin{eg}[\textsc{Jouanolou}'s foliation]\label{eg:Jouanolou}
Let us consider the foliation $\F_{J}^{d}$ of degree $d\geq2$ on $\pp$ defined, in the affine chart~$z=1,$ by $$\omega_{J}^{d}=(x^{d}y-1)\mathrm{d}x+(y^{d}-x^{d+1})\mathrm{d}y.$$
This example is due to \textsc{Jouanolou} and is historically the first explicit example of foliation without invariant algebraic curve (\cite{Jou79}). The point $p=(0,0)$ is an inflection point of maximal order $d-1$ of $\F_{J}^{d}$ because $\mathrm{T}^{\mathbb{P}}_{p}\F_{J}^{d}=\{x=0\}\hspace{2mm} \text{and} \hspace{2mm}\omega_{J}^{d}\wedge\mathrm{d}x\Big|_{x=0}=y^d\mathrm{d}y\wedge\mathrm{d}x.$ As a result $\F_{J}^{d}$ degenerates onto~$\F_{2}^{d}$ (Proposition~\ref{pro:cond-nécess-suff-dgnr-F2}). However,~we know from~\cite[Section~3]{LNP06} that every singularity  $s$ of $\F_{J}^{d}$ is non-degenerate with  \textsc{Baum-Bott} index $$\mathrm{BB}(\F_{J}^{d},s)=\frac{(d+2)^2}{d^2+d+1}\neq4,$$ so that $\F_{J}^{d}$ does not degenerate onto $\F_{1}^{d}$ (Proposition~\ref{pro:cond-nécess-suff-dgnr-F1}).
\end{eg}

\noindent The converse of assertion~\textbf{\textit{2.}} of Proposition~\ref{pro:cond-nécess-suff-dgnr-F2} is false as the following example shows.
\begin{eg}\label{eg:sans-inflex-maximal}
Let $\F$ be the foliation of degree $d\geq2$ on $\pp$ defined in the affine chart $z=1$ by
\begin{align*}
\hspace{3.3cm}&\omega=\mathrm{d}x+P(y)\mathrm{d}y,&& \text{where }P\in\C[y],\hspace{1mm}\deg P=d.
\end{align*}
It is easy to check that  $\Sing(\F)=\big\{[1:0:0]\big\}$\, and \,$\ItrF=P'(y).$  If the derivative  $P'$ has a single root, {\it i.e} if~$P$~is of the form $P(y)=a(y-\alpha)^d+b,$ where $\alpha,a,b\in\C,a\neq0,$ then $\F$ is conjugated to $\F_{2}^{d}$; indeed, we have
\begin{align*}
\hspace{1.65cm}&\frac{1}{a}\varphi^*\omega=\mathrm{d}x+y^{d}\mathrm{d}y, \quad \text{where}\hspace{1mm}\varphi=(ax-by,y+\alpha).
\end{align*}
\noindent We assume that the derivative $ P'$ has at least two distinct roots; this implies that $d\geq 3.$ A straightforward computation shows that $\F$ has no inflection point of maximal order $d-1$,  {\it i.e.} $\mathrm{Flex}(\F,d-1)=\vide.$ However, $\F$ degenerates onto $\F_{2}^{d}$. Indeed, by writing~$P(y)=a_{0}+a_{1}y+\cdots+a_{d}y^d$, $a_{i}\in\mathbb{C},\,a_{d}\neq0,$ and by putting $\psi=$\begin{small}$\left(\dfrac{a_{d}}{\varepsilon^{d+1}}x,\dfrac{1}{\varepsilon}y\right)$\end{small}, we obtain that
\begin{align*}
\hspace{-0.64cm}&\lim_{\varepsilon\to 0}\frac{\varepsilon^{d+1}}{a_{d}}\psi^*\omega=\mathrm{d}x+y^{d}\mathrm{d}y.
\end{align*}
\end{eg}

\begin{ques}\label{ques:2}
{\sl
Let $\F$ be a foliation of degree  $d\geq3$ on $\pp.$
Is it true that if $\F$ degenerates onto $\F_2^d$ then
\begin{itemize}
\item [--] either $\F$ possesses an inflection point of maximal order $d-1$,
\item [--] or $\F$ is conjugated to Example~\ref{eg:sans-inflex-maximal}, {\it i.e.} up to linear conjugation $\F$ is given by a $1$-form of type $\mathrm{d}x+P(y)\mathrm{d}y$ with $P\in\C[y],$ $\deg P=d$?
\end{itemize}
}
\end{ques}

\begin{pro}\label{pro:ouvert-U2}
{\sl
Let $d$ be an integer greater than or equal to $2.$ Let us denote by  $U_2(d)$ the set of foliations $\F\in\mathbf{F}(d)$
whose inflection divisor $\IF$ is transverse ({\it i.e.} $\IF=\ItrF$) and reduced. Then
\begin{itemize}
\item [\textit{(i)}] $U_2(d)$ contains the \textsc{Jouanolou}'s foliation $\F_{J}^{d}$ and it is a (non-empty) \textsc{Zariski} open subset of~$\mathbf{F}(d);$
\smallskip
\item [\textit{(ii)}] for any $d\geq3,$ every foliation $\F\in U_2(d)$ has a finite number (possibly zero) of transverse inflection points of order greater than or equal to $2$; in other words, the set $\bigcup\limits_{k=3}^{d}\mathrm{Flex}(\F,k-1)$ is finite.
\end{itemize}
}
\end{pro}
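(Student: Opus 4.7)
The plan is to treat the two conditions defining $U_2(d)$ separately, showing each is a Zariski open condition on $\mathbf{F}(d)$, and then to deduce (ii) from the reducedness of $\IF$ combined with the standard local description of the inflection divisor at a transverse inflection point.

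For (i), I would first check that $\IF=\ItrF$ defines a Zariski open subset of $\mathbf{F}(d).$ By properties~\texttt{1} and~\texttt{2} of the inflection divisor recalled in~\S2.2, this is equivalent to $\mathrm{Inv}(\F)=\vide.$ The incidence variety $Z:=\{(\F,\ell)\in\mathbf{F}(d)\times\pd\hspace{1mm}\vert\hspace{1mm}\ell\in\mathrm{Inv}(\F)\}$ is Zariski closed, since the $\F$-invariance of a line $\ell=\{F=0\}$ amounts to the divisibility of $\omega\wedge\mathrm{d}F$ by $F,$ which is a polynomial condition on the coefficients of $\F$ and of~$F.$ As~$\pd$ is projective, the projection $Z\to\mathbf{F}(d)$ is closed and its complement is open. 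Secondly, the reducedness of $\IF$ corresponds to the non-vanishing of the discriminant of the polynomial~(\ref{equa:ext1}) defining $\IF,$ whose coefficients depend polynomially on~$\F$; this is again a Zariski open condition. Hence $U_2(d)$ is Zariski open. For non-emptiness, I would invoke $\F_{J}^{d}$: by \cite{Jou79} it has no invariant algebraic curve, hence no invariant line, so $\IF_{J}^{d}=\mathrm{I}_{\F_{J}^{d}}^{\hspace{0.2mm}\mathrm{tr}};$ the reducedness of $\IF_{J}^{d}$ can be verified directly by computing~(\ref{equa:ext1}) for~$\omega_{J}^{d}$ and checking that the resulting polynomial is squarefree. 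Alternatively, since $\mathbf{F}(d)$ is irreducible, the intersection of the two non-empty open sets is automatically non-empty.

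For (ii) the key input is the standard fact, essentially contained in \cite{Per01}, that the multiplicity of $\IF$ at a transverse inflection point $p$ of order $\ell\geq 1$ equals $\ell.$ I would verify this in local affine coordinates $(x,y)$ centered at $p$ in which $\mathrm{T}_p^{\mathbb{P}}\F=\{x=0\}$: expanding the determinantal expression~(\ref{equa:ext1}) at $p$, the non-invariance of $\mathrm{T}_p^{\mathbb{P}}\F$ together with the tangency order condition $\Tang(\F,\mathrm{T}_p^{\mathbb{P}}\F,p)=\ell+1$ translate directly into the vanishing of the first $\ell-1$ homogeneous parts of the local equation of $\IF$ and the non-vanishing of the $\ell$-th part. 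Granted this, for any $\F\in U_2(d)$ the divisor $\IF=\ItrF$ is a reduced plane curve of degree $3d,$ whose set of singular points (i.e. points of multiplicity $\geq 2$) is finite. Since, by the above, a transverse inflection point of order $\geq 2$ is precisely a point of $\pp\smallsetminus\Sing\F$ at which $\IF$ has multiplicity $\geq 2$, we conclude that $\bigcup_{k=3}^{d}\mathrm{Flex}(\F,k-1)$ is finite.

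I expect the main obstacle to be the local computation linking the order of a transverse inflection point to the multiplicity of $\IF$ at that point. Although this identification is implicit in \cite{Per01}, a fully self-contained proof requires carefully passing between the homogeneous determinant~(\ref{equa:ext1}) and the affine local equation of $\IF,$ and tracking Taylor coefficients. The remaining ingredients (openness of the two Zariski conditions and the appeal to \textsc{Jouanolou}'s theorem) are essentially routine.
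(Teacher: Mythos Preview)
Your treatment of \textit{(i)} is essentially the paper's, with two caveats. First, the ``alternative'' (intersecting two non-empty opens in the irreducible $\mathbf{F}(d)$) would only give $U_2(d)\neq\vide$, not the stated conclusion $\F_J^d\in U_2(d)$; the reducedness of $\mathrm{I}_{\F_J^d}$ must actually be checked. Second, this check is less routine than you suggest: the paper computes $\mathrm{I}_{\F_J^d}=\{F=0\}$ with $F=x^{2d+1}z^{d-1}+y^{2d+1}x^{d-1}+z^{2d+1}y^{d-1}-3x^dy^dz^d$ and proves $F$ is squarefree for all $d$ by reduction modulo~$2$ and an explicit $\gcd$ computation in $\mathbb{F}_2(y,z)[x]$.

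Your argument for \textit{(ii)} has a genuine gap: the assertion that the multiplicity of $\IF$ at a transverse inflection point $p$ of order $\ell$ equals $\ell$ is \emph{false}. In coordinates with $p=(0,0)$ and $\mathrm{T}_p^{\mathbb{P}}\F=\{y=0\}$, take the germ $\mathrm{X}=\partial_x+(x^3+y)\partial_y$: then $B(x,0)=x^3$ gives $\Tang(\F,\{y=0\},p)=3$, so $\ell=2$, yet the affine inflection polynomial $\mathrm{I_X}=\mathrm{X}(x)\mathrm{X}^2(y)-\mathrm{X}(y)\mathrm{X}^2(x)=3x^2+x^3+y$ has nonzero linear part at $p$, so $\IF$ is \emph{smooth} there. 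The condition $\Tang=\ell+1$ only constrains the jet of $B$ \emph{along} the tangent line, while the multiplicity of $\mathrm{I_X}$ at $p$ also involves transverse derivatives such as $B_y(p)$, which are unconstrained. (This germ extends to an $\F\in U_2(3)$ by adding a generic term $C_3(x,y)(x\mathrm{d}y-y\mathrm{d}x)$, which destroys the invariant line at infinity without affecting the $1$-jet of $\mathrm{I_X}$ at $p$.) So ``$\IF$ reduced $\Rightarrow$ finitely many singular points'' does not bound the higher-order inflection locus.

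The paper's route to \textit{(ii)} is different. Via Lemma~\ref{lem:Tang-F-TpF-p} one sees that a point with $\Tang(\F,\mathrm{T}_p^{\mathbb{P}}\F,p)\geq3$ satisfies both $\mathrm{I_X}(p)=0$ and $\mathrm{X}(\mathrm{I_X})(p)=0$. One then shows the curves $\{\mathrm{I_X}=0\}$ and $\{\mathrm{X}(\mathrm{I_X})=0\}$ share no component: a common factor $K$ of $\mathrm{I_X}=KL$ and $\mathrm{X}(\mathrm{I_X})$ satisfies $\mathrm{X}(K)L\in K\cdot\C[x,y]$, and since $\IF$ reduced forces $\gcd(K,L)=1$, one gets $K\mid\mathrm{X}(K)$, i.e.\ $\{K=0\}\subset\IF$ is $\F$-invariant, contradicting $\IF=\ItrF$. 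Both hypotheses defining $U_2(d)$ are used here.
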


\noindent To establish this proposition, let us first prove the following lemma.
\begin{lem}\label{lem:Tang-F-TpF-p}
{\sl
Let $\F$ be a foliation of degree $d\geq2$ on $\pp,$ $p$ a regular point of $\F$ and $\mathrm{X}$ a polynomial vector field defining $\F$ in an affine chart $(x,y)$ containing $p.$ Then, for any $k\in\{2,3,\ldots,d\},$ $\Tang(\F,\mathrm{T}^{\mathbb{P}}_{p}\F,p)\geq k$ if and only if the matrix
$\left(\begin{array}{cccc}
\mathrm{X}(x) & \mathrm{X}^2(x) & \cdots & \mathrm{X}^k(x)\\
\mathrm{X}(y) & \mathrm{X}^2(y) & \cdots & \mathrm{X}^k(y)
\end{array}\right)\Big|_p$
has rank $1.$
}
\end{lem}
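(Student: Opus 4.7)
My approach would be to parametrize the leaf $\mathcal{L}$ of $\F$ through the regular point $p=(x_0,y_0)$ by the local flow of the vector field $\X=A\partial_x+B\partial_y$ defining $\F$, that is, to consider $\gamma(t):=\phi^{\X}_{t}(p)=(x(t),y(t))$. The key observation is that the Taylor coefficients of $\gamma$ at $t=0$ are precisely the iterated Lie derivatives: $x^{(j)}(0)=\X^{j}(x)|_{p}$ and $y^{(j)}(0)=\X^{j}(y)|_{p}$ for every $j\geq 0$. Since $p$ is regular one has $\gamma'(0)=(A(p),B(p))\neq(0,0)$, and the tangent line $\mathrm{T}^{\mathbb{P}}_{p}\F$ admits in the chosen affine chart the local equation $f(x,y):=B(p)(x-x_{0})-A(p)(y-y_{0})=0$.

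Plugging the flow parametrization into $f$, I would then obtain
\begin{align*}
f(\gamma(t))=\sum_{j\geq1}\frac{c_{j}}{j!}\,t^{j},\qquad c_{j}:=B(p)\X^{j}(x)|_{p}-A(p)\X^{j}(y)|_{p},
\end{align*}
and notice that $c_{1}=B(p)A(p)-A(p)B(p)=0$, while for $j\geq 2$ the coefficient $c_{j}$ coincides, up to sign, with the $2\times 2$ minor of the matrix of the statement formed by columns $1$ and $j$. Because the first column $(A(p),B(p))^{\top}$ is nonzero, the matrix has rank exactly one if and only if each of its remaining columns is proportional to the first, which is equivalent to the simultaneous vanishing $c_{2}=c_{3}=\cdots=c_{k}=0.$

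To close the argument I would invoke the identity $\Tang(\F,\mathrm{T}^{\mathbb{P}}_{p}\F,p)=(\mathcal{L}\cdot\mathrm{T}^{\mathbb{P}}_{p}\F)_{p}-1$, valid whenever the tangent line is not $\F$-invariant; establishing this is what I expect to be the main delicate point. The verification goes through adapted local coordinates $(u,v)$ in which $\mathrm{T}^{\mathbb{P}}_{p}\F=\{u=0\}$ and $\X=\alpha(u,v)\partial_{u}+\beta(u,v)\partial_{v}$ with $\alpha(0,0)=0$ and $\beta(0,0)\neq 0$: the definition of the tangency order via $\dim_{\mathbb{C}}\mathbb{C}\{u,v\}/\langle u,\alpha\rangle$ gives at once $\Tang(\F,\mathrm{T}^{\mathbb{P}}_{p}\F,p)=\mathrm{ord}_{v=0}\alpha(0,v)$, whereas integrating the implicit equation $du/dv=\alpha(u,v)/\beta(u,v)$ with $u(0)=0$ shows that the leaf is locally graphed by $u=O(v^{\Tang+1})$, so that $(\mathcal{L}\cdot\mathrm{T}^{\mathbb{P}}_{p}\F)_{p}=\Tang+1$. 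Combined with the obvious equality $(\mathcal{L}\cdot\mathrm{T}^{\mathbb{P}}_{p}\F)_{p}=\mathrm{ord}_{t=0}f(\gamma(t))$, this yields the desired equivalence $\Tang(\F,\mathrm{T}^{\mathbb{P}}_{p}\F,p)\geq k\iff c_{j}=0$ for $j=2,\ldots,k$. Finally, the case in which the tangent line happens to be $\F$-invariant is trivial: $\mathcal{L}$ coincides locally with $\mathrm{T}^{\mathbb{P}}_{p}\F$, so $f\circ\gamma\equiv 0$, every $c_{j}$ vanishes, and both $\Tang=+\infty$ and the rank-one condition hold automatically.
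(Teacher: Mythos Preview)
Your proof is correct and follows essentially the same strategy as the paper: parametrize the leaf by the flow of $\X$, use the identity $\gamma^{(j)}(0)=(\X^{j}(x),\X^{j}(y))|_{p}$, and translate the tangency condition into the vanishing of the $2\times 2$ minors of the displayed matrix. The paper carries this out after passing to adapted coordinates in which $p=(0,0)$ and $\mathrm{T}^{\mathbb{P}}_{p}\F=\{y=0\}$, so that the minors reduce to the Taylor coefficients $y_2,\ldots,y_k$ of the second component of the flow; your version stays in the original chart and phrases the same vanishing via the linear form $f=B(p)(x-x_0)-A(p)(y-y_0)$. One point where you are in fact more careful than the paper: you explicitly justify the relation $\Tang(\F,\mathrm{T}^{\mathbb{P}}_{p}\F,p)=(\mathcal{L}\cdot\mathrm{T}^{\mathbb{P}}_{p}\F)_{p}-1$, whereas the paper states the corresponding equality (with a harmless off-by-one slip in the intermediate formula $\Tang=\nu(g,0)$) without proof.
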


\begin{rem}
If $\mathrm{X}=\sum\limits_{i=1}^{n}\mathrm{X}_i(z_1,\ldots,z_n)\dfrac{\raisebox{-0.5mm}{$\partial$}}{\partial z_i}$ is a holomorphic vector field on  $\C^n$ and if $t\mapsto\alpha(t)$ is an integral curve of $\mathrm{X},$ then we have the following formula which can be easily proved by induction on~$j:$
\begin{align}\label{equa:derivees-courbe-integrale}
\frac{\raisebox{-0.8mm}{$\mathrm{d}^j$}}{\mathrm{d}\hspace{0.2mm}t^j}\alpha(t)=(\mathrm{X}^j(z_1),\ldots,\mathrm{X}^j(z_n))\circ\alpha(t).
\end{align}
\end{rem}

\begin{proof}
Let $t\mapsto\alpha(t)$ be the integral curve of $\mathrm{X}$ passing through $p$ at $t=0.$ The point~$p$ being regular~for~$\F,$~we have $\mathrm{T}_{\hspace{-0.4mm}p}\F\ni\alpha'(0)=\mathrm{X}(p)\neq0.$ Up to linear conjugation, we can assume that  $p=(0,0)$ and~$\mathrm{T}^{\mathbb{P}}_{p}\F=\{y=0\}.$ We can then write $\alpha(t)=\left(\sum\limits_{i\ge1}x_i\frac{t^i}{i!},\sum\limits_{i\ge1}y_i\frac{t^i}{i!}\right)$ with $y_1=0$ and $x_1\neq0.$ Thus,~$\Tang(\F,\mathrm{T}^{\mathbb{P}}_{p}\F,p)=\nu(g(t),0),$ where $g(t)=\sum\limits_{i\ge2}y_i\frac{t^i}{i!}.$ As a result, $\Tang(\F,\mathrm{T}^{\mathbb{P}}_{p}\F,p)\geq k$ if and only if $y_2=y_3=\cdots=y_k=0$, or equivalently if and only if the matrix
$
\left(\begin{array}{cccc}
x_1 & x_2 & \cdots & x_k\\
0   & y_2 & \cdots & y_k
\end{array}\right)
$
has rank~$1.$ Now,~by~using formula~(\ref{equa:derivees-courbe-integrale}), we see that
\[
\left(\begin{array}{cccc}
x_1 & x_2 & \cdots & x_k\\
0   & y_2 & \cdots & y_k
\end{array}\right)
=
\left(\begin{array}{cccc}
\mathrm{X}(x) & \mathrm{X}^2(x) & \cdots & \mathrm{X}^k(x)\\
\mathrm{X}(y) & \mathrm{X}^2(y) & \cdots & \mathrm{X}^k(y)
\end{array}\right)\Big|_{(x,y)=(0,0)},
\]
hence the lemma follows.
\end{proof}

\begin{proof}[\sl Proof of Proposition~\ref{pro:ouvert-U2}]
\textit{(i)} For $\F\in\mathbf{F}(d),$ to say that $\IF$ is transverse and reduced means that $\F$ has no invariant line and that $\IF$ has no multiple component, which shows that $U_2(d)$ is a \textsc{Zariski} open subset of $\mathbf{F}(d).$

\noindent As we have already mentioned in Example~\ref{eg:Jouanolou}, the \textsc{Jouanolou}'s foliation $\F_{J}^{d}$ has no invariant algebraic curve~\cite{Jou79}; in particular, it has no invariant line and consequently $\mathrm{I}_{\F_{J}^{d}}=\mathrm{I}_{\F_{J}^{d}}^{\hspace{0.2mm}\mathrm{tr}}.$ To establish the first announced assertion, it remains to prove that $\mathrm{I}_{\F_{J}^{d}}$ is reduced. In homogeneous coordinates, the foliation $\F_{J}^{d}$ is defined by the vector field $y^d\frac{\partial}{\partial x}+z^d\frac{\partial}{\partial y}+x^d\frac{\partial}{\partial z}$; an immediate computation, using formula~(\ref{equa:ext1}), shows that $\mathrm{I}_{\F_{J}^{d}}$ has equation  $F(x,y,z)=0,$ where
\[
F(x,y,z)=x^{2d+1}z^{d-1}+y^{2d+1}x^{d-1}+z^{2d+1}y^{d-1}-3x^dy^dz^d.
\]

\noindent We must show that $F$ has no multiple factor in $\C[x,y,z]$. Since $F\in\Z[x,y,z],$ it suffices to show that  $F$ has no multiple factor in $\mathbb{F}_{2}[x,y,z].$ Indeed, if $F$  had a multiple factor in $\C[x,y,z],$~then one of the resultants $\mathrm{Res}_{x}(F,\frac{\partial F}{\partial x})\in\Z[y,z]\hspace{0.4mm}$ or $\hspace{0.4mm}\mathrm{Res}_{y}(F,\frac{\partial F}{\partial y})\in\Z[x,z]\hspace{0.4mm}$ or $\hspace{0.4mm}\mathrm{Res}_{z}(F,\frac{\partial F}{\partial z})\in\Z[x,y]$ would be identically zero and therefore so would be its reduction modulo $2$; so that $F$ would also have a multiple factor in $\mathbb{F}_{\mathrm{2}}[x,y,z].$ We have to show that $\mathrm{gcd}(F,\frac{\partial F}{\partial x},\frac{\partial F}{\partial y},\frac{\partial F}{\partial z})=1$ in $\mathbb{F}_{2}[x,y,z],$ or equivalently that
\begin{align*}
&
\mathrm{gcd}(F,\tfrac{\partial F}{\partial x})=1\hspace{1mm}\text{in}\hspace{1mm}\mathbb{F}_{2}(y,z)[x],&&
\mathrm{gcd}(F,\tfrac{\partial F}{\partial y})=1\hspace{1mm}\text{in}\hspace{1mm}\mathbb{F}_{2}(x,z)[y],&&
\mathrm{gcd}(F,\tfrac{\partial F}{\partial z})=1\hspace{1mm}\text{in}\hspace{1mm}\mathbb{F}_{2}(x,y)[z].
\end{align*}
The coordinates  $x,y,z$ playing a symmetric role, it suffices again to show that $\mathrm{gcd}(F,\tfrac{\partial F}{\partial x})=1$ in $\mathbb{F}_{2}(y,z)[x].$ In~$\mathbb{F}_{2}[x,y,z]$ we have
\begin{align*}
&
F=x^{2d+1}z^{d-1}+y^{2d+1}x^{d-1}+z^{2d+1}y^{d-1}+x^dy^dz^d
&&\hspace{2mm}\text{and}\hspace{2mm}&&
\tfrac{\partial F}{\partial x}=x^{d-2}\left(x^{d+2}z^{d-1}+dxy^dz^d+(d+1)y^{2d+1}\right).
\end{align*}
Then $x=0$ is not a root of  $F\in\mathbb{F}_{2}(y,z)[x]$ and consequently
\begin{align*}
&
\mathbb{F}_{2}(y,z)[x]\ni\gcd(F,\tfrac{\partial F}{\partial x})=\gcd(F,\varphi),
\quad\text{where}\quad
\varphi=x^{d+2}+dxzy^d+(d+1)\frac{y^{2d+1}}{z^{d-1}}.
\end{align*}
Moreover, a straightforward computation shows that
\begin{Small}
\begin{align*}
x^3F=\left(x^{d+2}z^{d-1}-(d-1)xy^dz^d-dy^{2d+1}\right)\varphi
+y^{d-1}z^{2d+1}\left(x+\frac{y^{d+1}}{z^d}\right)\left(x^2+(d^2-d-1)\frac{y^{d+1}}{z^d}x+d(d+1)\frac{y^{2d+2}}{z^{2d}}\right),
\end{align*}
\end{Small}
\hspace{-1mm}so that
\begin{align*}
\mathbb{F}_{2}(y,z)[x]\ni\gcd(F,\varphi)&=
\gcd\left(\Big(x+\frac{y^{d+1}}{z^d}\Big)\Big(x-\frac{y^{d+1}}{z^d}\Big),\,\varphi\right),
\hspace{1mm}\text{because}\hspace{1mm}
d^2-d\equiv d(d+1)\equiv0\hspace{-2.5mm}\mod 2
\\
&=\gcd\left(x-\frac{y^{d+1}}{z^d},\,x^{d+2}+dxzy^d+(d+1)\frac{y^{2d+1}}{z^{d-1}}\right)
\\
&=\gcd\left(x-\frac{y^{d+1}}{z^d},\,x^{d+2}-\frac{y^{2d+1}}{z^{d-1}}\right)
\\
&=1,
\end{align*}
because $\left(\dfrac{y^{d+1}}{z^d}\right)^{d+2}\neq\dfrac{y^{2d+1}}{z^{d-1}}$ in the field $\mathbb{F}_{2}(y,z).$ As a result $\mathbb{F}_{2}(y,z)[x]\ni\gcd(F,\tfrac{\partial F}{\partial x})=1.$
\medskip

\textit{(ii)} Let $\F$ be a foliation of degree $d\geq3$ on $\pp$ with reduced and transverse inflection divisor  $\mathrm{I}_{\F}$, {\it i.e.} $\F\in U_2(d).$ We want to show that the set $\Gamma(\F):=\bigcup\limits_{k=3}^{d}\mathrm{Flex}(\F,k-1)$ is finite. By definition of~$\Gamma(\F)$ we have
\begin{align}\label{equa:Gamma-F}
\Gamma(\F)\subset\Big\{p\in\pp\hspace{1mm}\vert\hspace{1mm} p\not\in\Sing(\F),\hspace{1mm} \Tang(\F,\mathrm{T}^{\mathbb{P}}_{p}\F,p)\geq3\Big\}.
\end{align}
Let $\mathrm{X}$ be a vector field defining $\F$ in an affine chart $\C^2=\{(x,y)\}\subset\pp.$ Lemma~\ref{lem:Tang-F-TpF-p} and inclusion~(\ref{equa:Gamma-F}) imply that $\Gamma(\F)\cap\C^2$ is contained in the set of points $p\in\C^2$ such that
\begin{align*}
&
\left(
\begin{array}{c}
\mathrm{X}(x)\\
\mathrm{X}(y)
\end{array}
\right)(p)\neq
\left(
\begin{array}{c}
0\\
0
\end{array}
\right),
&&
\mathrm{I_X}(p):=\left|
\begin{array}{cc}
\mathrm{X}(x) & \mathrm{X}^2(x)\\
\mathrm{X}(y) & \mathrm{X}^2(y)
\end{array}
\right|(p)=0,
&&
\mathrm{X}(\mathrm{I_X})(p)=\left|
\begin{array}{cc}
\mathrm{X}(x) & \mathrm{X}^3(x)\\
\mathrm{X}(y) & \mathrm{X}^3(y)
\end{array}
\right|(p)=0.
\end{align*}
Now, the affine chart $\C^2=\{(x,y)\}\subset\pp$ being arbitrary, $\Gamma(\F)$ is finite if and only if $\Gamma(\F)\cap\C^2$ is finite. It~suffices therefore to show that the algebraic curves  $\mathrm{I}_{\F}\cap\C^2=\{\mathrm{I_X}(x,y)=0\}$ and $\mathcal{C}:=\{\mathrm{X}(\mathrm{I}_\mathrm{X})(x,y)=0\}$ intersect at a finite number of points, {\it i.e.} that they have no common component. Let us argue by contradiction and assume that there exist $K,L,L'\in\C[x,y],$ with $\deg K>0,$ such that $\mathrm{I_X}=KL$ and $\mathrm{X}(\mathrm{I}_\mathrm{X})=KL'.$ Then $KL'=\mathrm{X}(KL)=\mathrm{X}(K)L+K\mathrm{X}(L)$ and therefore $\mathrm{X}(K)L=K(L'-\mathrm{X}(L)).$ Moreover, the hypothesis that $\mathrm{I}_{\F}$ is reduced implies that $\mathrm{gcd}(K,L)=1.$ It follows that there is  $L''\in\C[x,y]$ such that $\mathrm{X}(K)=KL''$, which means that the algebraic curve $\mathcal{C}':=\{K(x,y)=0\},$ contained in~$\mathrm{I}_{\F}$, is invariant by $\F,$ contradicting the hypothesis that $\mathrm{I}_{\F}$ is transverse.
\end{proof}

\begin{thm}\label{thm:bassin-attraction-F2}
{\sl Let $d$ be an integer greater than or equal to $2.$ Let us denote by  $\Sigma_2(d)$ the subset of~$\mathbf{F}(d)$ defined by
\begin{align*}
&\Sigma_2(d):=\Big\{\F\in \mathbf{F}(d)\hspace{1mm}\vert\hspace{1mm}\mathrm{Flex}(\F,d-1)\neq\vide\Big\}.
\end{align*}
Then
\begin{itemize}
\item [\textit{(a)}] $\mathbf{B}(\F_{2}^{2})=\mathbf{F}(2)\setminus\mathbf{FC}(2)=\Sigma_2(2)$ and, for any $d\geq3$, we have $\emptyset\neq \Sigma_2(d)\varsubsetneq \mathbf{B}(\F_{2}^{d});$
\smallskip
\item [\textit{(b)}] $\Sigma_2(d)$ is a constructible subset of~$\mathbf{F}(d)$;
\smallskip
\item [\textit{(c)}] for any $d\geq3$, we have $\dim\Sigma_2(d)\geq\dim \mathbf{F}(d)-(d-3).$
\end{itemize}
In particular, the set $\Sigma_2(3)$, and therefore $\mathbf{B}(\F_{2}^{3}),$ contains a non-empty \textsc{Zariski} open subset of $\mathbf{F}(3).$
}
\end{thm}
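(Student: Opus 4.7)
The plan is to combine the sufficient condition of Proposition~\ref{pro:cond-n\'ecess-suff-dgnr-F2} with a fiber-dimension argument modeled on the proof of Theorem~\ref{thm:bassin-attraction-F1}. For part~\textit{(a)}, the equality $\mathbf{B}(\F_{2}^{2})=\mathbf{F}(2)\setminus\mathbf{FC}(2)$ is exactly the cited equality~(\ref{equa:Delta(2)=F(2)-FC(2)}); since in degree~$2$ we have $d-1=1$, the condition $\mathrm{Flex}(\F,1)\neq\vide$ is precisely the negation of convexity, so $\Sigma_2(2)=\mathbf{F}(2)\setminus\mathbf{FC}(2)$. For $d\geq 3$, the non-emptiness of $\Sigma_2(d)$ is witnessed by Examples~\ref{eg:H2} and~\ref{eg:Jouanolou}, the inclusion $\Sigma_2(d)\subset\mathbf{B}(\F_{2}^{d})$ is exactly assertion~\textbf{\textit{2.}} of Proposition~\ref{pro:cond-n\'ecess-suff-dgnr-F2}, and Example~\ref{eg:sans-inflex-maximal} shows that this inclusion is strict.

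For parts~\textit{(b)} and~\textit{(c)}, I introduce the projection $\pi\colon\mathbf{F}(d)\times\pp\to\mathbf{F}(d)$ and the incidence locus
\[
W_{2}(d):=\Big\{(\F,p)\in\mathbf{F}(d)\times\pp\,\Big|\,p\not\in\Sing(\F),\;\Tang(\F,\mathrm{T}^{\mathbb{P}}_{p}\F,p)=d\Big\},
\]
so that $\Sigma_{2}(d)=\pi(W_{2}(d))$. Using Lemma~\ref{lem:Tang-F-TpF-p}, the closed condition $\Tang(\F,\mathrm{T}^{\mathbb{P}}_{p}\F,p)\geq d$ translates into the vanishing of the $2\times 2$ minors of the matrix $\bigl(\mathrm{X}^{j}(x),\mathrm{X}^{j}(y)\bigr)_{1\leq j\leq d}$ evaluated at $p$, which together with the open conditions $p\not\in\Sing(\F)$ and $\Tang\not>d$ exhibit $W_{2}(d)$ as a quasi-projective subvariety of $\mathbf{F}(d)\times\pp$. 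Chevalley's theorem then yields the constructibility of $\Sigma_{2}(d)$, establishing part~\textit{(b)}.

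For part~\textit{(c)}, the rank-one condition on a $2\times d$ matrix whose first column is generically non-zero is cut out by $d-1$ scalar equations (one per column $j=2,\ldots,d$), so every irreducible component of $W_{2}(d)$ has dimension at least $\dim\mathbf{F}(d)+2-(d-1)=\dim\mathbf{F}(d)-(d-3)$. To transfer this lower bound to $\Sigma_{2}(d)$, I select an irreducible component $\Sigma_{2}^{0}(d)$ of $\Sigma_2(d)$ meeting the Zariski open set $U_{2}(d)$ of Proposition~\ref{pro:ouvert-U2} (which is possible because $\F_{J}^{d}\in U_{2}(d)\cap\Sigma_{2}(d)$ by Example~\ref{eg:Jouanolou} and Proposition~\ref{pro:ouvert-U2}~\textit{(i)}), then pick an irreducible component $W_{2}^{n}(d)$ of $W_{2}(d)$ with $\overline{\pi(W_{2}^{n}(d))}=\overline{\Sigma_{2}^{0}(d)}$. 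By Proposition~\ref{pro:ouvert-U2}~\textit{(ii)}, the non-empty fibers of $\pi$ over $U_{2}(d)\cap\Sigma_{2}(d)$ are finite, so the fiber dimension theorem applied to $\pi|_{W_{2}^{n}(d)\cap\pi^{-1}(U_{2}(d))}$ gives $\dim\Sigma_{2}^{0}(d)=\dim W_{2}^{n}(d)\geq\dim\mathbf{F}(d)-(d-3)$. The final clause for $d=3$ follows at once: we obtain $\dim\Sigma_{2}(3)\geq\dim\mathbf{F}(3)$, and a constructible subset of full dimension in the irreducible variety $\mathbf{F}(3)$ necessarily contains a non-empty Zariski open subset.

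The step I expect to be the main obstacle is the dimension count in part~\textit{(c)}: verifying that the rank-one condition of Lemma~\ref{lem:Tang-F-TpF-p} really contributes codimension exactly $d-1$ (and not more) at a generic point of $W_{2}(d)$, rather than just an upper bound on the codimension. This is precisely where the Zariski open set $U_{2}(d)$ intervenes, both to provide via $\F_{J}^{d}$ a concrete non-empty intersection $U_{2}(d)\cap\Sigma_{2}(d)$ that gets the argument started, and to supply the generic finiteness of $\pi$ needed to apply the fiber dimension theorem.
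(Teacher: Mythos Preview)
Your proposal is correct and follows essentially the same approach as the paper: the same incidence locus $W_2(d)$, the same use of Lemma~\ref{lem:Tang-F-TpF-p} to express it as a quasi-projective variety cut out by $d-1$ minors, Chevalley for constructibility, and the same fiber-dimension argument over $U_2(d)$ using Proposition~\ref{pro:ouvert-U2}. One small remark on your final paragraph: the lower bound $\dim W_2^n(d)\geq\dim\mathbf{F}(d)-(d-3)$ is automatic from the $d-1$ equations (Krull's principal ideal theorem), so the genuine obstacle is not the codimension count on $W_2(d)$ but rather the transfer to $\Sigma_2(d)$, which is exactly where the finiteness of fibers over $U_2(d)$ is needed---as you correctly implement.
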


\begin{proof}
\textit{(a)}
As we have already said in Introduction, the first equality $\mathbf{B}(\F_{2}^{2})=\mathbf{F}(2)\setminus\mathbf{FC}(2)$ follows from~\cite[Theorem~3]{CDGBM10}. The second equality $\mathbf{F}(2)\setminus\mathbf{FC}(2)=\Sigma_2(2)$ is a consequence of the following obvious remark: if $\F\in\mathbf{F}(2)\setminus\mathbf{FC}(2)$ then every transverse inflection point of $ \F $ is of order $1$.

\noindent The set $\Sigma_{2}(d)$ contains the foliations $\mathcal{H}_{2}^{d}$ and $\F_J^d$ (Examples~\ref{eg:H2}~and~\ref{eg:Jouanolou}) and is therefore non-empty. According to assertion~\textbf{\textit{2.}} of Proposition~\ref{pro:cond-nécess-suff-dgnr-F2}, we have $\Sigma_{2}(d)\subset\mathbf{B}(\F_{2}^{d})$; this inclusion is strict for any $d\geq3$ as Example~\ref{eg:sans-inflex-maximal} shows.
\medskip

\textit{(b)} Let~$\pi\hspace{1mm}\colon\mathbf{F}(d)\times\pp\to\mathbf{F}(d)$ be the projection onto the first factor; notice that  $\Sigma_{2}(d)=\pi(W_{2}(d)),$ where
\begin{align*}
W_{2}(d):&=\bigcup_{\F\in\Sigma_{2}(d)}\{\F\}\times\mathrm{Flex}(\F,d-1)\\
&=\Big\{(\F,p)\in\mathbf{F}(d)\times\pp\hspace{1mm}\vert\hspace{1mm} p\not\in\Sing(\F),\hspace{1mm} \Tang(\F,\mathrm{T}^{\mathbb{P}}_{p}\F,p)=d\Big\}.
\end{align*}
By Lemma~\ref{lem:Tang-F-TpF-p}, $W_{2}(d)$ can be rewritten as
\begin{align}\label{equa:W2(d)}
W_{2}(d)=\Big\{
(\F,p)\in\mathbf{F}(d)\times\pp\hspace{1mm}\vert\hspace{1mm}
\left(
\begin{array}{c}
\mathrm{X}(x)\\
\mathrm{X}(y)
\end{array}
\right)(p)\neq
\left(
\begin{array}{c}
0\\
0
\end{array}
\right),
\hspace{1mm}
\left|
\begin{array}{cc}
\mathrm{X}(x) & \mathrm{X}^j(x)\\
\mathrm{X}(y) & \mathrm{X}^j(y)
\end{array}
\right|(p)=0,
j=2,\ldots,d
\Big\},
\end{align}
where $\mathrm{X}$ denotes a polynomial vector field defining $\F$ in an affine chart $(x,y)$ containing $p.$ It follows that $W_{2}(d)$ is a quasi-projective subvariety of $\mathbf{F}(d)\times\pp.$ Therefore, by \textsc{Chevalley}'s~theorem \cite[Exercise~II.3.19]{Har77}, the set $\Sigma_{2}(d)=\pi(W_{2}(d))$ is constructible.
\medskip

\textit{(c)} From the above discussion and Proposition~\ref{pro:ouvert-U2}~\textit{(i)}, we have $\F_J^d\in U_2(d)\cap\Sigma_{2}(d)\neq\vide$ ($U_2(d)$~being the set of foliations of $\mathbf{F}(d)$ with reduced and transverse inflection divisor). Therefore there exists an irreducible component $\Sigma_{2}^{0}(d)$ of $\Sigma_{2}(d)$ such that $U_2(d)\cap\Sigma_{2}^{0}(d)\neq\vide.$ We denote by $\pi_0\hspace{1mm}\colon W_{2}(d)\to\mathbf{F}(d)$ the restriction of~$\pi$ to~$W_{2}(d).$ Let $W_{2}(d)=\bigcup\limits_{i=1}^{n}W_{2}^{i}(d)$ be the decomposition of $W_{2}(d)$ into its irreducible components. Then, by arguing as in the proof of Theorem~\ref{thm:bassin-attraction-F1}, we see that there is $k\in\{1,\ldots,n\}$ such that~$\overline{\pi_0(W_{2}^{k}(d))}=\overline{\Sigma_{2}^{0}(d)}.$ Since $U_2(d)$ is a \textsc{Zariski} open subset of $\mathbf{F}(d)$ (Proposition~\ref{pro:ouvert-U2}~\textit{(i)}), the morphism $\pi_0$ therefore induces by~restriction a dominant morphism of quasi-projective varieties $\pi_{0}^{k}\hspace{1mm}\colon W_{2}^{k}(d)\cap\pi_{0}^{-1}(U_2(d))\to\overline{\Sigma_{2}^{0}(d)}\cap U_2(d).$

\noindent  Notice that, for any $\F\in U_2(d)\cap\Sigma_{2}(d),$ the fiber $\pi_{0}^{-1}(\F)$ is finite and non-empty, because~$\pi_{0}^{-1}(\F)=\{\F\}\times\mathrm{Flex}(\F,d-1)$ and $\mathrm{Flex}(\F,d-1)$ is finite and non-empty by~assertion~\textit{(ii)}~of~Proposition~\ref{pro:ouvert-U2}. Since~$\pi_{0}(W_{2}^{k}(d)\cap\pi_{0}^{-1}(U_2(d)))\subset U_2(d)\cap\Sigma_{2}(d),$ we deduce that all the non-empty fibers of  $\pi_{0}^{k}$ are finite~and therefore zero-dimensional. The fiber dimension theorem (\emph{cf.}~\cite[Theorem~3, page 49]{Mum88}) then ensures that~$\dim (W_{2}^{k}(d)\cap\pi_{0}^{-1}(U_2(d)))=\dim (\overline{\Sigma_{2}^{0}(d)}\cap U_2(d))$; since $W_{2}^{k}(d)\cap\pi_{0}^{-1}(U_2(d))$ and $\overline{\Sigma_{2}^{0}(d)}\cap U_2(d)$ are non-empty open subsets of the irreducible varieties $W_{2}^{k}(d)$ and~$\overline{\Sigma_{2}^{0}(d)}$ respectively, we have
\begin{align*}
\hspace{2.3cm}\dim\overline{\Sigma_{2}^{0}(d)}=\dim (\overline{\Sigma_{2}^{0}(d)}\cap U_2(d))=\dim (W_{2}^{k}(d)\cap\pi_{0}^{-1}(U_2(d)))=\dim W_{2}^{k}(d).
\end{align*}
Now, it follows from~(\ref{equa:W2(d)}) that each irreducible component $W_{2}^{i}(d)$ of $W_{2}(d)$ has dimension
\[\dim W_{2}^{i}(d)\geq \dim(\mathbf{F}(d)\times\pp)-(d-1)=\dim\mathbf{F}(d)-(d-3),\]
hence
\[\hspace{1.55cm}\dim\Sigma_{2}(d)=\dim\overline{\Sigma_{2}(d)}\geq\dim\overline{\Sigma_{2}^{0}(d)}=\dim W_{2}^{k}(d)\geq\dim\mathbf{F}(d)-(d-3).\]

\noindent The subset $\Sigma_{2}(d)\subset\mathbf{F}(d)$ being constructible, it contains a dense open subset of its closure~$\overline{\Sigma_{2}(d)}$. In degree $d=3$ we have $\dim\overline{\Sigma_{2}(3)}\geq\dim\mathbf{F}(3)$ and therefore $\dim\overline{\Sigma_{2}(3)}=\dim\mathbf{F}(3),$ so that $\overline{\Sigma_{2}(3)}=\mathbf{F}(3)$ because $\mathbf{F}(3)$ is irreducible. It follows that $\Sigma_{2}(3)$ contains a dense open subset of $\mathbf{F}(3)$. This ends the proof of~the~theorem.
\end{proof}
\smallskip

\begin{rem}\label{rem:degenerescence-F1-F2}
The set $\mathbf{F}(d)$ contains elements which degenerate onto both  $\F_{1}^{d}$ and $\F_{2}^{d},$ {\it e.g.} the family of foliations $\mathcal{G}^{d}(\gamma)$, $\gamma\in\C.$ Indeed, on the one hand, we have seen (Example~\ref{eg:famille-G-gamma}) that $\mathcal{G}^{d}(\gamma)$~degenerates~onto~$\F_1^d.$ On the other hand, by putting $\varphi=(\frac{x}{\varepsilon},\frac{y}{\varepsilon})$ we obtain that $\lim\limits_{\varepsilon\to 0}\varepsilon^{d+1}\varphi^*\eta^{d}(\gamma)=\omegaoverline_{2}^{d},$ which shows that  $\mathcal{G}^{d}(\gamma)$ degenerates onto the homogeneous foliation $\mathcal{H}_2^d$ (Example~\ref{eg:H2}) and therefore, by transitivity, onto $\F_2^d.$
\end{rem}

\begin{eg}\label{eg:H12}
Let us consider the homogeneous foliation  $\mathcal{H}_{1,2}^{d}$ defined in the affine chart $z=1$ by the $1$-form
$$\hspace{-1.1cm}\omegaoverline_{1,2}^{d}=(x^d+y^d)\mathrm{d}x+x^d\mathrm{d}y.$$ This foliation degenerates onto both $\F_{1}^{d}$ and $\F_{2}^{d}.$ Indeed, on the one hand, $\mathcal{H}_{1,2}^{d}$ is given in the affine chart~$y=1$ by $$\hspace{1.05cm}\thetaoverline_{1,2}^{d}=x\mathrm{d}z-z\mathrm{d}x+x^d\mathrm{d}z+x^d(x\mathrm{d}z-z\mathrm{d}x);$$ we see that the  point $[0:1:0]$ is a radial singularity of maximal order $d-1$ of $\mathcal{H}_{1,2}^{d}.$ Thus, by Proposition~\ref{pro:cond-nécess-suff-dgnr-F1}, $\mathcal{H}_{1,2}^{d}$ degenerates onto~$\F_{1}^{d}.$ On the other hand, a straightforward computation shows that $$\hspace{2cm}\mathrm{Flex}(\mathcal{H}_{1,2}^{d},d-1)=\{y=0\}\setminus\{[0:0:1]\}\neq\vide;$$ consequently, $\mathcal{H}_{1,2}^{d}$~also degenerates onto $\F_{2}^{d}$~(Proposition~\ref{pro:cond-nécess-suff-dgnr-F2}).

\noindent Since $\overline{\mathcal{O}(\mathcal{H}_{1,2}^{d})}\subset\mathcal{O}(\mathcal{H}_{1,2}^{d})\cup\mathcal{O}(\F_{1}^{d})\cup\mathcal{O}(\F_{2}^{d})$ (Remark~\ref{rem:adherence-orbite-homogene}), we deduce that in fact
\[
\hspace{0.7cm}\overline{\mathcal{O}(\mathcal{H}_{1,2}^{d})}=\mathcal{O}(\mathcal{H}_{1,2}^{d})\cup\mathcal{O}(\F_{1}^{d})\cup\mathcal{O}(\F_{2}^{d}).
\]
\end{eg}

\begin{thm}\label{thm:intersection-bassin-attraction-F1-F2}
{\sl Let $d$  be an integer greater than or equal to $2.$ Then
\begin{itemize}
\item [\textit{(a)}] $\emptyset\neq\Sigma_1(d)\cap\Sigma_2(d)\subset\mathbf{B}(\F_{1}^{d})\cap\mathbf{B}(\F_{2}^{d})\supset\mathbf{B}(\mathcal{H}_{1,2}^{d});$
\smallskip
\item [\textit{(b)}] $\mathbf{B}(\mathcal{H}_{1,2}^{d})$ contains a quasi-projective subvariety of $\mathbf{F}(d)$ of dimension equal to $\dim\mathbf{F}(d)-3d.$
\end{itemize}
}
\end{thm}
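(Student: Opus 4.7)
The plan is to handle parts \textit{(a)} and \textit{(b)} in sequence, with part \textit{(a)} a direct consequence of material already developed and part \textit{(b)} modelled on the incidence-variety and fibre-dimension scheme of Theorems~\ref{thm:bassin-attraction-F1} and~\ref{thm:bassin-attraction-F2}.

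For \textit{(a)}, I would first verify that $\mathcal{H}_{1,2}^d\in\Sigma_1(d)\cap\Sigma_2(d)$ using Example~\ref{eg:H12}, which records that $\mathcal{H}_{1,2}^d$ has a radial (hence quasi-radial) singularity of maximal order $d-1$ at $[0:1:0]$ and that $\mathrm{Flex}(\mathcal{H}_{1,2}^d,d-1)=\{y=0\}\setminus\{[0:0:1]\}\neq\vide$. The inclusion $\Sigma_1(d)\cap\Sigma_2(d)\subset\mathbf{B}(\F_1^d)\cap\mathbf{B}(\F_2^d)$ then follows by intersecting the sufficient conditions $\Sigma_i(d)\subset\mathbf{B}(\F_i^d)$ of assertions~\textbf{\textit{2.}} of Propositions~\ref{pro:cond-n\'ecess-suff-dgnr-F1} and~\ref{pro:cond-n\'ecess-suff-dgnr-F2}. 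Finally, the inclusion $\mathbf{B}(\mathcal{H}_{1,2}^d)\subset\mathbf{B}(\F_1^d)\cap\mathbf{B}(\F_2^d)$ follows by transitivity of orbit closures: if $\mathcal{G}\in\mathbf{B}(\mathcal{H}_{1,2}^d)$ then $\overline{\mathcal{O}(\mathcal{H}_{1,2}^d)}\subset\overline{\mathcal{O}(\mathcal{G})}$, and Example~\ref{eg:H12} established that $\F_1^d,\F_2^d\in\overline{\mathcal{O}(\mathcal{H}_{1,2}^d)}$.

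For \textit{(b)}, the key observation is that a homothety centred at a singular point of maximal algebraic multiplicity $d$ realises a degeneration onto a homogeneous foliation of degree $d$. Concretely, if $\nu(\F,s)=d$ and, in an affine chart centred at $s$, one writes $\omega=\omega_d+h\cdot(x\mathrm{d}y-y\mathrm{d}x)$ with $\omega_d\in\mathbb{C}[x,y]_d^{\,2}$ homogeneous of degree~$d$ and $h\in\mathbb{C}[x,y]_d$ (the only admissible form of the degree-$(d+1)$ part for a degree-$d$ foliation), then $\varphi_t(x,y)=(tx,ty)$ gives $\tfrac{1}{t^{d+1}}\varphi_t^*\omega\to\omega_d$ as $t\to 0$. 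When $\omega_d$ is linearly conjugate to $\omega_{1,2}^d$ the limit foliation lies in $\mathcal{O}(\mathcal{H}_{1,2}^d)$, whence $\F\in\mathbf{B}(\mathcal{H}_{1,2}^d)$. Accordingly I would introduce the incidence variety
\[
W(d):=\bigl\{(\F,s)\in\mathbf{F}(d)\times\pp\;\big|\;\nu(\F,s)=d\text{ and }\omega_d(\F,s)\in\mathrm{GL}_2\cdot\omega_{1,2}^d\bigr\},
\]
verify that it is quasi-projective by spelling out its defining polynomial equations in the Taylor coefficients of $\omega$ at $s$, and study the projection $\pi\colon W(d)\to\mathbf{F}(d)$, whose image lies in $\mathbf{B}(\mathcal{H}_{1,2}^d)$. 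Chevalley's theorem gives constructibility of $\pi(W(d))$, and the fibre-dimension theorem applied on the open subset where the multiplicity-$d$ singularity is isolated yields the dimension of an irreducible component of the image.

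The main obstacle is matching exactly the claimed dimension $\dim\mathbf{F}(d)-3d$. A naive count—$2$ from $s\in\pp$, $4$ from the $\mathrm{GL}_2$-orbit of $\omega_{1,2}^d$ inside the $2(d+1)$-dimensional space $\mathbb{C}[x,y]_d^{\,2}$, and $d+1$ from $h$, minus one for the scalar freedom on $\omega$—gives dimension $d+6$, which equals $\dim\mathbf{F}(d)-3d$ precisely when $d=2$ but falls short for $d\geq3$. Obtaining the codimension $3d$ in arbitrary degree will therefore require enlarging the construction, presumably by allowing additional free Taylor coefficients at $s$ beyond the strict normal form or by adjoining a line $\ell\ni s$ in the manner of $W_1(d)$ from Theorem~\ref{thm:bassin-attraction-F1}, and then verifying both that the enlarged family still lies inside $\mathbf{B}(\mathcal{H}_{1,2}^d)$—by exhibiting an explicit one-parameter degeneration for each element—and that the generic fibres of $\pi$ remain zero-dimensional by arguments analogous to those of Propositions~\ref{pro:ouvert-U1}~\textit{(ii)} and~\ref{pro:ouvert-U2}~\textit{(ii)}. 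This precise dimensional balancing is the delicate point of the proof.
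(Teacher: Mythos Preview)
Your part~\textit{(a)} is correct and matches the paper's argument essentially verbatim.

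For part~\textit{(b)}, however, there is a genuine gap, and you have in fact identified it yourself: your family is too small for $d\geq3$, and the fixes you sketch do not work. The issue is the \emph{direction} of the homothety. You zoom \emph{in} at a singular point $s$ of maximal algebraic multiplicity~$d$, which forces $\omega_0=\cdots=\omega_{d-1}=0$ in an affine chart centred at~$s$; the only remaining freedom is in the $(d{+}1)$-dimensional coefficient $C_d$ of $x\mathrm{d}y-y\mathrm{d}x$. Allowing ``additional free Taylor coefficients'' would reintroduce lower-degree terms, but then the limit of $t^{-(d+1)}\varphi_t^{*}\omega$ as $t\to0$ would be governed by those lower terms, not by $\omega_d$, so the degeneration would no longer land on~$\mathcal{H}_{1,2}^{d}$.

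The paper's construction reverses everything and is far simpler: it zooms \emph{out}, towards an invariant line at infinity. One defines $\Sigma(\mathcal{H}_{1,2}^{d})$ to be the set of $\F\in\mathbf{F}(d)$ admitting an invariant line~$\ell$ such that, in coordinates with $\ell=\{z=0\}$ (equivalently $C_d\equiv0$), $\F$ is given by $\omega=\sum_{i=0}^{d-1}\omega_i+\lambda\,\omegaoverline_{1,2}^{d}$ with $\lambda\in\mathbb{C}^*$ and the $\omega_i$ homogeneous of degree~$i$ but otherwise \emph{completely free}. Setting $\varphi=(x/\varepsilon,y/\varepsilon)$ one has $\varepsilon^{d+1}\varphi^{*}\omega=\sum_{i=0}^{d-1}\varepsilon^{d-i}\omega_i+\lambda\,\omegaoverline_{1,2}^{d}\to\lambda\,\omegaoverline_{1,2}^{d}$ as $\varepsilon\to0$, so $\Sigma(\mathcal{H}_{1,2}^{d})\subset\mathbf{B}(\mathcal{H}_{1,2}^{d})$. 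The dimension count is then an explicit parameter count,
\[
\dim\Sigma(\mathcal{H}_{1,2}^{d})=2+2\sum_{i=0}^{d-1}(i+1)=d^2+d+2=\dim\mathbf{F}(d)-3d,
\]
with no need for incidence varieties, \textsc{Chevalley}'s theorem, or fibre-dimension arguments. In short: the constraint ``$C_d=0$ and $\omega_d$ prescribed'' (codimension $3d$) is dual to, and much weaker than, your constraint ``$\omega_0=\cdots=\omega_{d-1}=0$ and $\omega_d$ prescribed'', which is what produces the correct bound.
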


\begin{proof}
\textit{(a)} The intersection $\Sigma_1(d)\cap\Sigma_2(d)$ contains the homogeneous foliation $\mathcal{H}_{1,2}^{d}$ (Example~\ref{eg:H12}) and is therefore non-empty. The inclusion $\Sigma_1(d)\cap\Sigma_2(d)\subset\mathbf{B}(\F_{1}^{d})\cap\mathbf{B}(\F_{2}^{d})$ follows from Theorems~\ref{thm:bassin-attraction-F1}~and~\ref{thm:bassin-attraction-F2}.

\noindent Let us show the inclusion $\mathbf{B}(\mathcal{H}_{1,2}^{d})\subset\mathbf{B}(\F_{1}^{d})\cap\mathbf{B}(\F_{2}^{d}).$ Let $\F\in\mathbf{B}(\mathcal{H}_{1,2}^{d})$, {\it i.e.} $\F\in\mathbf{F}(d)$ such that $\mathcal{H}_{1,2}^{d}\in\overline{\mathcal{O}(\F)}.$ Since $\mathcal{H}_{1,2}^{d}$ degenerates onto $\F_{i}^{d},i=1,2,$ it follows that $\F_{i}^{d}\in\overline{\mathcal{O}(\mathcal{H}_{1,2}^{d})}\subset\overline{\mathcal{O}(\F)}$, hence $\F\in\mathbf{B}(\F_{1}^{d})\cap\mathbf{B}(\F_{2}^{d}).$
\smallskip

\textit{(b)} Let us denote by $\Sigma(\mathcal{H}_{1,2}^{d})$ the subset of $\mathbf{F}(d)$ defined as follows: an element $\F$ of $\mathbf{F}(d)$ belongs to $\Sigma(\mathcal{H}_{1,2}^{d})$ if and only if
\smallskip
\begin{itemize}
\item [\textit{(1)}] $\F$ admits an invariant line  $\ell$;

\item [\textit{(2)}] there is a system of homogeneous coordinates $[x:y:z]\in\pp$ in which $\ell=\{z=0\}$ and $\F$ is defined in the affine chart  $z=1$ by a $1$-form $\omega$ of type
\begin{align*}
&\omega=\sum\limits_{i=0}^{d-1}\omega_i+\lambda\omegaoverline_{1,2}^{d}=\sum\limits_{i=0}^{d-1}\omega_i+\lambda\left((x^d+y^d)\mathrm{d}x+x^d\mathrm{d}y\right),
\end{align*}
where $\lambda\in\C^*$ and the $\omega_i$'s are homogeneous $1$-forms of degree $i.$
\end{itemize}
\smallskip

\noindent Notice that $\Sigma(\mathcal{H}_{1,2}^{d})\subset\mathbf{B}(\mathcal{H}_{1,2}^{d}).$ Indeed, by putting $\varphi=(\frac{x}{\varepsilon},\frac{y}{\varepsilon})$ and by writing $\omega_i=P_i(x,y)\mathrm{d}x+Q_i(x,y)\mathrm{d}y,$ where $P_i,Q_i\in\C[x,y]_i,$ we obtain
\[
\varepsilon^{d+1}\varphi^*\omega=\sum_{i=0}^{d-1}(\varepsilon^{d-i}P_i(x,y)\mathrm{d}x+\varepsilon^{d-i}Q_i(x,y)\mathrm{d}y)+\lambda\omegaoverline_{1,2}^{d}
\]
which tends to $\lambda\omegaoverline_{1,2}^{d}$ as $\varepsilon$ tends  to $0$. It follows that $\mathcal{H}_{1,2}^{d}\in\overline{\mathcal{O}(\F)}$ for any $\F\in\Sigma(\mathcal{H}_{1,2}^{d}),$ hence the inclusion $\Sigma(\mathcal{H}_{1,2}^{d})\subset\mathbf{B}(\mathcal{H}_{1,2}^{d})$ holds.

\noindent Moreover, every foliation $\F\in\mathbf{F}(d)$ is given in the affine chart $z=1$ by a $1$-form of type
\begin{align*}
\sum\limits_{i=0}^{d}\big(A_i(x,y)\mathrm{d}x+B_i(x,y)\mathrm{d}y\big)+C_d(x,y)(x\mathrm{d}y-y\mathrm{d}x),
\end{align*}
where $A_i,B_i\in\C[x,y]_i,C_d\in\C[x,y]_d$ with $\mathrm{gcd}\big(yC_d-\sum\limits_{i=0}^{d}A_i,xC_d+\sum\limits_{i=0}^{d}B_i\big)=1$. Condition \textit{(2)} is then equivalent to taking $C_d\equiv0,\,A_d(x,y)=\lambda(x^d+y^d),\,B_d(x,y)=\lambda\,x^d.$ Since the set of foliations of $\mathbf{F}(d)$ admitting an invariant line is a \textsc{Zariski} closed subset of $\mathbf{F}(d),$ we deduce that $\Sigma(\mathcal{H}_{1,2}^{d})$ is a quasi-projective~subvariety~of~$\mathbf{F}(d).$

\noindent Since $\omega$ and $\mu\omega$ define the same foliation if $\mu\neq0,$ and the choice of a line $\ell\subset\pp$
is equivalent to the choice of a point in $\pd,$ conditions \textit{(1)} and \textit{(2)} imply that
\[
\dim\Sigma(\mathcal{H}_{1,2}^{d})=2+2\sum_{i=0}^{d-1}(i+1)=d^2+d+2=\dim\mathbf{F}(d)-3d.
\]
\end{proof}

\section{A family of foliations of $\mathbf{F}(d)$ with orbits of dimension less than or equal to $7$}\label{sec:F0-lambda-degre-d}

In this section we will establish some properties of the family $(\F_{0}^{d}(\lambda))_{\lambda\in\C^*}$ of foliations of degree $d$ on $\pp$ defined in the affine chart $z=1$ by $$\omega_{0}^{d}(\lambda)=x\mathrm{d}y-\lambda y\mathrm{d}x+y^{d}\mathrm{d}y.$$
In homogeneous coordinates, $\F_{0}^{d}(\lambda)$ is given by $$\Omega_{0}^{d}(\lambda)=-\lambda yz^d\mathrm{d}x+z\left(xz^{d-1}+y^d\right)\mathrm{d}y+y\left((\lambda-1)xz^{d-1}-y^d\right)\mathrm{d}z.$$ Thus, the singular locus of $\F_{0}^{d}(\lambda)$ consists of the two  points $s_1=[0:0:1]$ and~$s_2=[1:0:0]$. The singularity $s_1$ is non-degenerate with   \textsc{Baum}-\textsc{Bott} index $\mathrm{BB}(\F_{0}^{d}(\lambda),s_1)=2+\lambda+\frac{1}{\lambda}$ and the singular point $s_2$ has maximal algebraic multiplicity $d.$ We see that for $\lambda=1$ the $1$-form $\Omega_{0}^{d}(1)$ writes in the affine chart $x=1$ as $$z^d\mathrm{d}y+y^d(z\mathrm{d}y-y\mathrm{\mathrm{d}}z);$$ we deduce that $\F_{0}^{d}(1)$ is conjugated to the foliation  $\F_{1}^{d}$ and is therefore convex.
\smallskip

\noindent In the sequel we assume that $\lambda\in\C\setminus\{0,1\}$. A direct computation, using formula~(\ref{equa:ext1}), leads to
\begin{align}\label{equa:inflex-F0-lambda-degre-d}
\hspace{2cm}&\mathrm{I}_{\F_{0}^{d}(\lambda)}^{\hspace{0.2mm}\mathrm{inv}}=yz^{2d-1}
&&\text{and}&&
\mathrm{I}_{\F_{0}^{d}(\lambda)}^{\hspace{0.2mm}\mathrm{tr}}=(\lambda-1)x-\big((d-1)\lambda+1\big)y^d;
\end{align}
it follows that, for any $\lambda\in\C\setminus\{0,1\}$, $\F_{0}^{d}(\lambda)$ is not convex.

\noindent A straightforward computation shows that the algebraic curve~$(1-\lambda\hspace{0.2mm}d)x+y^d=0$ is invariant by~$\F_{0}^{d}(\lambda).$ What~is~more, the rational $1$-form  $\eta_{0}^{d}(\lambda)=\dfrac{\omega_{0}^{d}(\lambda)}{y\left(\left(1-\lambda\hspace{0.2mm}d\right)x+y^d\right)}$ is closed. For $\lambda=\frac{1}{d}$ we note that  $\eta_{0}^{d}(\frac{1}{d})=\dfrac{\omega_{0}^{d}(\lambda)}{y^{d+1}}$ has as first integral $\dfrac{x}{dy^{d}}-\ln y;$ this allows to see that $\mathrm{Iso}(\F_{0}^{d}(\frac{1}{d}))$ is the group $\{(\alpha^{d}x,\alpha y)\hspace{1mm}\vert\hspace{1mm}\alpha\in\mathbb{C}^*\}.$ When~$\lambda\in\C\setminus\{0,1,\frac{1}{d}\}$ a straightforward computation shows that $\eta_{0}^{d}(\lambda)$ integrates into $$\lambda\ln\left(\left(1-\lambda\hspace{0.2mm}d\right)x+y^d\right)-\ln y,$$ which allows to verify that the isotropy group is here again $$\mathrm{Iso}(\F_{0}^{d}(\lambda))=\{(\alpha^{d}x,\alpha y)\hspace{1mm}\vert\hspace{1mm}\alpha\in\mathbb{C}^*\}.$$
It follows in particular that, for any $\lambda\in\C\setminus\{0,1\}$, $\mathcal{O}(\F_{0}^{d}(\lambda))$ has dimension~$7.$
\smallskip

\noindent Notice that two foliations $\F_{0}^{d}(\lambda)$ and $\F_{0}^{d}(\lambda')$ are conjugated if and only if $\lambda=\lambda'.$
\begin{pro}\label{pro:cond-nécess-suff-dgnr-F0-lambda}
{\sl Let $\lambda$ be a nonzero complex number. Let $\F$ be an element of $\mathbf{F}(d)$ such that~$\F_{0}^{d}(\lambda)\not\in\mathcal{O}(\F).$

\noindent\textbf{\textit{1.}} If $\F$ degenerates onto  $\F_{0}^{d}(\lambda)$, then $\F$ admits a non-degenerate singular point $m$ satisfying $\mathrm{BB}(\F,m)=2+\lambda+\frac{1}{\lambda}$.

\noindent\textbf{\textit{2.}} If $\F$ possesses a non-degenerate singular point $m$ such that $$\mathrm{BB}(\F,m)=2+\lambda+\frac{1}{\lambda}\qquad \text{and}\qquad \kappa(\F,m)=d,$$
then $\F$ degenerates onto $\F_{0}^{d}(\lambda).$
}
\end{pro}

\begin{proof}
It suffices to argue as in the proof of Proposition~\ref{pro:cond-nécess-suff-dgnr-F1}, replacing the foliation $\F_{1}^{d}$ by $\F_{0}^{d}(\lambda)$ and the equality  $\mathrm{BB}(\F,m)=4$ by $\mathrm{BB}(\F,m)=2+\lambda+\frac{1}{\lambda}.$
\end{proof}

\begin{pro}\label{pro:Orbite-F0-lambda-degre-3-4-5-fermee}
{\sl The orbit $\mathcal{O}\big(\F_{0}^{d}(\lambda)\big)$ is closed in  $\mathbf{F}(d)$ in the following two cases:
\begin{itemize}
\item [\textit{(i)}] $d\geq3$\, and \,$\lambda=-\dfrac{1}{d-1};$
\smallskip
\item [\textit{(ii)}] $d\in\{3,4,5\}$\, and \,$\lambda\in\C^*.$
\end{itemize}
}
\end{pro}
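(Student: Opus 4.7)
The plan is to invoke Corollary~\ref{coralph:dim-o(F)=<7}. Since $\mathrm{Iso}(\F_{0}^{d}(\lambda))=\{(\alpha^{d}x,\alpha y)\mid\alpha\in\C^*\}$ has dimension~$1$ whenever $\lambda\in\C\setminus\{0,1\}$, the orbit $\mathcal{O}(\F_{0}^{d}(\lambda))$ has dimension~$7$ in this range; for $\lambda=1$ the foliation is conjugated to $\F_{1}^{d}$, whose orbit is closed by Theorem~\ref{thmalph:dim6}, so I assume $\lambda\neq 1$ throughout. Corollary~\ref{coralph:dim-o(F)=<7} then reduces the closedness assertion to showing that neither $\F_{1}^{d}$ nor $\F_{2}^{d}$ lies in $\overline{\mathcal{O}(\F_{0}^{d}(\lambda))}$.

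I first rule out degeneration onto $\F_{1}^{d}$, uniformly in both cases. The singular locus of $\F_{0}^{d}(\lambda)$ consists of $s_{1}=[0:0:1]$, non-degenerate with Baum-Bott index $2+\lambda+\frac{1}{\lambda}$, and $s_{2}=[1:0:0]$, whose algebraic multiplicity $d\geq 2$ forces $\mu(\F_{0}^{d}(\lambda),s_{2})\geq d^{2}>1$, so that $s_{2}$ is degenerate. By Proposition~\ref{pro:cond-nécess-suff-dgnr-F1}~\textbf{\textit{1.}}, a necessary condition for degeneration onto $\F_{1}^{d}$ is the existence of a non-degenerate singularity with Baum-Bott index~$4$; the only candidate is $s_{1}$, and the equation $2+\lambda+\frac{1}{\lambda}=4$ forces $\lambda=1$, which is excluded.

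In case~(i) I exploit the vanishing of the coefficient $(d-1)\lambda+1$ at $\lambda=-\frac{1}{d-1}$. Homogenizing formula~(\ref{equa:inflex-F0-lambda-degre-d}) shows that the total inflection divisor then equals $xyz^{3d-2}$ up to a nonzero scalar. A direct verification establishes that $\{x=0\}$ is never $\F_{0}^{d}(\lambda)$-invariant while $\{y=0\}$ and $\{z=0\}$ always are, which forces the decomposition $\mathrm{I}^{\mathrm{inv}}_{\F_{0}^{d}(\lambda)}=yz^{3d-2}$ and $\mathrm{I}^{\mathrm{tr}}_{\F_{0}^{d}(\lambda)}=x$; the latter has degree~$1$, strictly smaller than $d-1=\deg\mathrm{I}^{\mathrm{tr}}_{\F_{2}^{d}}$ for $d\geq 3$. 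Proposition~\ref{pro:cond-nécess-suff-dgnr-F2}~\textbf{\textit{1.}} then excludes degeneration onto $\F_{2}^{d}$.

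Case~(ii) is the main obstacle, because for generic $\lambda\in\C^*$ the transverse inflection divisor has degree $d\geq d-1$ and the previous degree criterion becomes silent. The strategy I plan to pursue is a degree-by-degree analysis for $d\in\{3,4,5\}$: starting from a hypothetical family $\varphi_{\varepsilon}^{*}\F_{0}^{d}(\lambda)\to\F_{2}^{d}$ with $\varphi_{\varepsilon}\in\mathrm{PGL}_{3}(\C)$, the total inflection divisor $yz^{2d-1}\bigl[(\lambda-1)xz^{d-1}-((d-1)\lambda+1)y^d\bigr]$ must converge, up to a nonzero scalar, to $x^{2d+1}y^{d-1}$. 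Tracking where the invariant lines $\{y=0\}$, $\{z=0\}$ and the irreducible degree-$d$ factor (which carries $s_{2}$ as a point of multiplicity $d-1$) must be sent, and combining this with the requirement that $s_{1}$ and $s_{2}$ coalesce into the unique singularity of $\F_{2}^{d}$ together with Baum-Bott conservation, should provide enough rigidity to confine $\varphi_{\varepsilon}$ to a compact subset of $\mathrm{PGL}_{3}(\C)$, contradicting the assumed degeneration. The hard part will be turning this outline into a uniform argument valid for every $\lambda\in\C^*$ in each of the three small degrees, which is precisely the reason the extension to $d\geq 6$ is stated only as a conjecture.
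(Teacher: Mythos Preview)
Your reduction via Corollary~\ref{coralph:dim-o(F)=<7} and the exclusion of $\F_{1}^{d}$ through the Baum--Bott index are correct and coincide with Lemma~\ref{lem:Caracterisation-fermeture-Orbite-F0-lambda-degre-d} in the paper; your treatment of case~\textit{(i)} via $\deg\mathrm{I}^{\mathrm{tr}}_{\F_{0}^{d}(\lambda_0)}=1<d-1$ also matches the paper's argument.

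For case~\textit{(ii)} the paper takes an entirely different, computational route: for each $d\in\{3,4,5\}$ the authors exhibit an explicit homogeneous polynomial $P_{d}$ on the ambient $\mathbb{P}^{d^2+4d+2}_{\C}$ (of degree~$5$ when $d=3$, degree~$3$ when $d=4,5$, found with \texttt{Maple}) that vanishes identically on $\mathcal{O}(\F_{0}^{d}(\lambda))$ for every $\lambda\in\C^*$ but satisfies $P_{d}(\F_{2}^{d})\neq 0$. Since $\mathrm{Zeros}(P_{d})$ is Zariski closed, this gives $\F_{2}^{d}\notin\overline{\mathcal{O}(\F_{0}^{d}(\lambda))}$ at once. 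Your geometric sketch, by contrast, is not a proof, as you yourself acknowledge: controlling how the two invariant lines, the irreducible degree-$d$ transverse inflection curve, and the degenerate singularity $s_{2}$ simultaneously behave under an arbitrary divergent family in $\mathrm{PGL}_{3}(\C)$, and showing this is incompatible with landing on $\F_{2}^{d}$ uniformly in $\lambda$, is genuinely delicate. It is not clear that this line of attack can be completed, which is presumably why the authors resort to computer algebra and why the conjectural extension to $d\geq 6$ (Conjecture~\ref{conj:1}) is again formulated in terms of a polynomial vanishing on the orbit rather than geometrically.
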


\noindent The proof of this proposition uses the following lemma.
\begin{lem}\label{lem:Caracterisation-fermeture-Orbite-F0-lambda-degre-d}
{\sl
Let $\lambda$ be a nonzero complex number. Then, the orbit $\mathcal{O}\big(\F_{0}^{d}(\lambda)\big)$ is closed in  $\mathbf{F}(d)$ if and only if $\F_{0}^{d}(\lambda)$ does not degenerate onto $\F_{2}^{d}.$
}
\end{lem}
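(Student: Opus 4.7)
The plan is to combine Corollary~\ref{coralph:dim-o(F)=<7} with the fact that the non-degenerate singularity of $\F_{0}^{d}(\lambda)$ already carries enough information to rule out any degeneration onto~$\F_{1}^{d}$, so that the only possible obstruction to closedness is a degeneration onto $\F_{2}^{d}$.

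First I would dispose of the direct implication. If $\mathcal{O}(\F_{0}^{d}(\lambda))$ is closed in $\mathbf{F}(d)$, then $\overline{\mathcal{O}(\F_{0}^{d}(\lambda))}=\mathcal{O}(\F_{0}^{d}(\lambda))$; since the orbits $\mathcal{O}(\F_{0}^{d}(\lambda))$ and $\mathcal{O}(\F_{2}^{d})$ do not coincide (for $\lambda=1$ because $\F_{0}^{d}(1)\in\mathcal{O}(\F_{1}^{d})$ and $\mathcal{O}(\F_{1}^{d})\neq\mathcal{O}(\F_{2}^{d})$, and for $\lambda\neq 1$ by the dimension count $7\neq 6$), the foliation $\F_{2}^{d}$ cannot lie in $\overline{\mathcal{O}(\F_{0}^{d}(\lambda))}$, whence $\F_{0}^{d}(\lambda)$ does not degenerate onto $\F_{2}^{d}$.

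For the converse, I would treat the case $\lambda=1$ and the case $\lambda\in\C^*\setminus\{1\}$ separately. When $\lambda=1$, the foliation $\F_{0}^{d}(1)$ is linearly conjugated to $\F_{1}^{d}$, hence $\mathcal{O}(\F_{0}^{d}(1))=\mathcal{O}(\F_{1}^{d})$ is closed by Theorem~\ref{thmalph:dim6}. For $\lambda\in\C^*\setminus\{1\}$ we have $\dim\mathcal{O}(\F_{0}^{d}(\lambda))=7$, so Corollary~\ref{coralph:dim-o(F)=<7} reduces the problem to proving that neither $\F_{1}^{d}$ nor $\F_{2}^{d}$ lies in $\overline{\mathcal{O}(\F_{0}^{d}(\lambda))}$. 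The hypothesis provides the second exclusion directly. For the first, I would invoke assertion~\textbf{\textit{1.}} of Proposition~\ref{pro:cond-n�cess-suff-dgnr-F1}: a degeneration onto $\F_{1}^{d}$ would force $\F_{0}^{d}(\lambda)$ to possess a non-degenerate singularity with \textsc{Baum-Bott} index $4$. Now the only singular points of $\F_{0}^{d}(\lambda)$ are $s_{1}=[0:0:1]$ and $s_{2}=[1:0:0]$; the point $s_{2}$ has algebraic multiplicity $d\geq 2$ and is therefore degenerate, while $s_{1}$ is non-degenerate with $\mathrm{BB}(\F_{0}^{d}(\lambda),s_{1})=2+\lambda+\tfrac{1}{\lambda}$. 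The equation $2+\lambda+\tfrac{1}{\lambda}=4$ is equivalent to $(\lambda-1)^{2}=0$, which is excluded by $\lambda\neq 1$. Hence $\F_{1}^{d}\notin\overline{\mathcal{O}(\F_{0}^{d}(\lambda))}$, completing the proof.

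No step looks genuinely delicate: the substantive input is packaged in Corollary~\ref{coralph:dim-o(F)=<7} and in the necessary condition of Proposition~\ref{pro:cond-n�cess-suff-dgnr-F1}; the only computation to keep track of is the \textsc{Baum-Bott} index at $s_{1}$, together with the observation that $s_{2}$ is too singular to be non-degenerate. If anything, the mild subtlety is the split between $\lambda=1$ and $\lambda\neq 1$, which one must carry out because the orbit dimension jumps from~$6$ to~$7$ at $\lambda=1$, making Corollary~\ref{coralph:dim-o(F)=<7} applicable only after this case has been dispatched by hand.
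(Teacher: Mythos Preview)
Your proof is correct and follows essentially the same approach as the paper: the direct implication is dismissed as immediate, and for the converse one separates $\lambda=1$ (where $\F_{0}^{d}(1)\in\mathcal{O}(\F_{1}^{d})$ has closed orbit) from $\lambda\neq 1$, where Corollary~\ref{coralph:dim-o(F)=<7} applies and the \textsc{Baum-Bott} index $2+\lambda+\tfrac{1}{\lambda}\neq 4$ at the unique non-degenerate singularity $s_{1}$ rules out degeneration onto $\F_{1}^{d}$ via Proposition~\ref{pro:cond-n�cess-suff-dgnr-F1}. Your write-up is slightly more explicit than the paper's (notably in justifying the direct implication and in observing that $s_{2}$ is degenerate), but the argument is the same.
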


\begin{proof}
The direct implication is obvious. Let us prove the converse. From the above discussion, $\F_{0}^{d}(1)$ is~conjugated to the convex foliation  $\F_{1}^{d}$; therefore its orbit $\mathcal{O}\big(\F_{0}^{d}(1))$ is closed in $\mathbf{F}(d).$ For~any~$\lambda\in\C\setminus\{0,1\}$, the unique non-degenerate singular point $s_1=[0:0:1]$ of $\F_{0}^{d}(\lambda)$ has \textsc{Baum}-\textsc{Bott} index $\mathrm{BB}(\F_{0}^{d}(\lambda),s_1)=2+\lambda+\frac{1}{\lambda}\neq4$; this implies, according to assertion \textbf{\textit{1.}} of Proposition~\ref{pro:cond-nécess-suff-dgnr-F1}, that $\F_{0}^{d}(\lambda)$ does not degenerate onto~$\F_{1}^{d}.$ Moreover, for any $\lambda\in\C\setminus\{0,1\}$, $\mathcal{O}(\F_{0}^{d}(\lambda))$ has dimension~$7.$ The converse implication then follows immediately from~Corollary~\ref{coralph:dim-o(F)=<7}.
\end{proof}

\begin{proof}[\sl Proof of Proposition~\ref{pro:Orbite-F0-lambda-degre-3-4-5-fermee}]
\textit{(i)} Let us put $\lambda_0=-\frac{1}{d-1}$; according to~(\ref{equa:inflex-F0-lambda-degre-d}) we have $\mathrm{I}_{\F_{0}^{d}(\lambda_0)}^{\hspace{0.2mm}\mathrm{tr}}=(\lambda_0-1)x,$ hence $\deg\mathrm{I}_{\F_{0}^{d}(\lambda_0)}^{\hspace{0.2mm}\mathrm{tr}}=1<d-1$ for any $d\geq3.$ According to the first assertion of Proposition~\ref{pro:cond-nécess-suff-dgnr-F2}, it follows that, for~any~$d\geq3,$ the foliation $\F_{0}^{d}(\lambda_0)$ does not degenerate onto $\F_{2}^{d},$ so that its orbit $\mathcal{O}\big(\F_{0}^{d}(\lambda_0)\big)$ is closed in~$\mathbf{F}(d)$~(Lemma~\ref{lem:Caracterisation-fermeture-Orbite-F0-lambda-degre-d}).
\medskip

\textit{(ii)} Let $[x:y:z]$ be homogeneous coordinates in $\pp.$ For $n\in\N$, let us denote by $\Lambda^{1}_{n}$ the $\C$-vector space of $1$-forms in the variables $x,y,z,$ whose coefficients are homogeneous polynomials of degree $n.$ Let us put $\alpha=y\mathrm{d}z-z\mathrm{d}y,$ $\beta=z\mathrm{d}x-x\mathrm{d}z$ and $\gamma=x\mathrm{d}y-y\mathrm{d}x.$ We have the identification
\begin{small}
\begin{eqnarray*}
\mathbf{F}(d)\hspace{-2.2mm}&=&\hspace{-2.2mm}
\left\{
[\Omega]\in\mathbb{P}(\Lambda^{1}_{d+1})
\hspace{1mm}\vert\hspace{1mm}
\Omega=p\mathrm{d}x+q\mathrm{d}y+r\mathrm{d}z,
\hspace{1mm}
p,q,r\in\C[x,y,z]_{d+1},
\hspace{1mm}
xp+yq+zr=0,\gcd(p,q,r)=1
\right\}
\\
\hspace{-2.2mm}&=&\hspace{-2.2mm}
\left\{
[\Omega]\in\mathbb{P}(\Lambda^{1}_{d+1})
\hspace{1mm}\vert\hspace{1mm}
\Omega=A\alpha+B\beta+C\gamma,
\hspace{1mm}
A,B\in\C[x,y,z]_{d},\hspace{1mm}C\in\C[x,y]_{d},
\hspace{1mm}
\gcd\big(yA-xB,zB-yC,xC-zA\big)=1
\right\}.
\end{eqnarray*}
\end{small}
\hspace{-1mm}By writting
\begin{tiny}
\begin{align*}
&
A=\xi_{1}\hspace{0.1mm}x^d+\xi_3\hspace{0.1mm}x^{d-1}y+\cdots+\xi_{2d+1}\hspace{0.1mm}y^d+
\Big(\xi_{2d+3}\hspace{0.1mm}x^{d-1}+\xi_{2d+5}\hspace{0.1mm}x^{d-2}y+\cdots+\xi_{4d+1}\hspace{0.1mm}y^{d-1}\Big)z+
\Big(\xi_{4d+3}\hspace{0.1mm}x^{d-2}+\xi_{4d+5}\hspace{0.1mm}x^{d-3}y+\cdots+\xi_{6d-1}\hspace{0.1mm}y^{d-2}\Big)z^2+
\cdots+\xi_{d^2+3d+1}\hspace{0.1mm}z^{d},\\
&
B=\xi_{2}\hspace{0.1mm}x^d+\xi_4\hspace{0.1mm}x^{d-1}y+\cdots+\xi_{2d+2}\hspace{0.1mm}y^d+
\Big(\xi_{2d+4}\hspace{0.1mm}x^{d-1}+\xi_{2d+6}\hspace{0.1mm}x^{d-2}y+\cdots+\xi_{4d+2}\hspace{0.1mm}y^{d-1}\Big)z+
\Big(\xi_{4d+4}\hspace{0.1mm}x^{d-2}+\xi_{4d+6}\hspace{0.1mm}x^{d-3}y+\cdots+\xi_{6d}\hspace{0.1mm}y^{d-2}\Big)z^2+
\cdots+\xi_{d^2+3d+2}\hspace{0.1mm}z^{d},\\
&
C=\xi_{d^2+3d+3}\hspace{0.1mm}x^d+\xi_{d^2+3d+4}\hspace{0.1mm}x^{d-1}y+
\xi_{d^2+3d+5}\hspace{0.1mm}x^{d-2}y^2+
\cdots+\xi_{d^2+4d+2}\hspace{0.1mm}xy^{d-1}+\xi_{d^2+4d+3}\hspace{0.1mm}y^d,
\end{align*}
\end{tiny}
\hspace{-1mm}we can identify the class $[\Omega]$ of $\Omega=A\alpha+B\beta+C\gamma$ to the element $[\xi_1:\xi_2:\cdots:\xi_{d^2+4d+3}]\in\mathbb{P}^{\hspace{0.2mm}d^2+4d+2}_{\C}.$ Thus, we can identify $\mathbf{F}(d)$ with the \textsc{Zariski} open set:
\begin{SMALL}
\begin{align*}
\left\{
[\xi_1:\xi_2:\cdots:\xi_{d^2+4d+3}]\in\mathbb{P}^{\hspace{0.2mm}d^2+4d+2}_{\C}
\hspace{0.5mm}\left\vert
\begin{array}[c]{l}
\vspace{2mm}
A=\xi_{1}\hspace{0.1mm}x^d+\xi_3\hspace{0.1mm}x^{d-1}y+\cdots+\xi_{2d+1}\hspace{0.1mm}y^d+
\big(\xi_{2d+3}\hspace{0.1mm}x^{d-1}+\xi_{2d+5}\hspace{0.1mm}x^{d-2}y+\cdots+\xi_{4d+1}\hspace{0.1mm}y^{d-1}\big)z+
\cdots+\xi_{d^2+3d+1}\hspace{0.1mm}z^{d}\\
\vspace{2mm}
B=\xi_{2}\hspace{0.1mm}x^d+\xi_4\hspace{0.1mm}x^{d-1}y+\cdots+\xi_{2d+2}\hspace{0.1mm}y^d+
\big(\xi_{2d+4}\hspace{0.1mm}x^{d-1}+\xi_{2d+6}\hspace{0.1mm}x^{d-2}y+\cdots+\xi_{4d+2}\hspace{0.1mm}y^{d-1}\big)z+
\cdots+\xi_{d^2+3d+2}\hspace{0.1mm}z^{d}\\
\vspace{2mm}
C=\xi_{d^2+3d+3}\hspace{0.1mm}x^d+\xi_{d^2+3d+4}\hspace{0.1mm}x^{d-1}y+\xi_{d^2+3d+5}\hspace{0.1mm}x^{d-2}y^2+
\cdots+\xi_{d^2+4d+2}\hspace{0.1mm}xy^{d-1}+\xi_{d^2+4d+3}\hspace{0.1mm}y^d\\
\gcd\big(yA-xB,zB-yC,xC-zA\big)=1
\end{array}
\right.
\right\}.
\end{align*}
\end{SMALL}
\hspace{-1.17mm}Then, via this identification, we have
\begin{small}
\begin{align*}
&
\F_{2}^{d}=\big[\Omega_{2}^{d}\big]=\big[x^{d}\beta+y^{d}\gamma]=[0:1:0:0:\cdots:0:0:1\big]
\\
&\hspace{-0.7cm}\text{\normalsize and}
\\
&
\F_{0}^{d}(\lambda)=\big[\Omega_{0}^{d}(\lambda)\big]=\big[(y^d+xz^{d-1})\alpha+\lambda\,yz^{d-1}\beta\big]                              =\big[\underbrace{0:0:\cdots:0}_{2d}:1:\underbrace{0:0:\cdots:0}_{d^2+d-5}:1:0:0:\lambda:\underbrace{0:0:\cdots:0}_{d+3}\big].
\end{align*}
\end{small}
\hspace{-1.17mm}In addition, the orbit of a foliation $\F=[\Omega]\in\mathbf{F}(d)$ is
\begin{align*}
\mathcal{O}(\F)=\left\{
[\varphi^*\Omega]
\hspace{1mm}\Big\vert\hspace{1mm}
\varphi=[a_1x+a_2y+a_3z:a_4x+a_5y+a_6z:a_7x+a_8y+a_9z]\in\mathrm{Aut}(\pp)
\right\}.
\end{align*}

\noindent Let $[x_1:x_2:\cdots:x_{d^2+4d+3}]$ be a system of homogeneous coordinates in $\mathbb{P}^{\hspace{0.2mm}d^2+4d+2}_{\C}.$ For $d=3,$  let us consider the following homogeneous polynomial in $x_1,x_2,\ldots,x_{24}$ of degree $5$:
\begin{SMALL}
\begin{align*}
&\hspace{0.3cm}
P_{3}=
-90x_{2}\Big(x_{1}\left(294x_{1}-269x_{4}\right)+10x_{2}\left(29x_{3}+4x_{6}\right)+86x_{4}^2\Big)x_{22}x_{24}
-1125x_{2}^2\left(21x_{1}-23x_{4}\right)x_{23}x_{24}\\
&\hspace{0.87cm}
+45x_{2}\Big(2x_{3}\left(294x_{1}+13x_{4}\right)-x_{6}\left(552x_{1}-271x_{4}\right)+1125x_{2}x_{5}\Big)x_{21}x_{24}+28125x_{2}x_{10}x_{21}x_{23}x_{24}\\
&\hspace{0.87cm}
+25\Big(108\left(x_{9}-2x_{12}\right)\left(3x_{1}-4x_{4}\right)+9x_{10}\left(112x_{3}-93x_{6}\right)
+675x_{2}x_{11}\Big)x_{21}^2x_{24}-6000x_{2}x_{10}x_{22}^2x_{24}\\
&\hspace{0.87cm}
-5625x_{5}x_{11}x_{21}^3+20\Big(\left(2x_{1}-x_{4}\right)\left(41x_{9}-7x_{12}\right)+30x_{10}\left(2x_{3}-3x_{6}\right)+50x_{2}x_{11}\Big)x_{22}^3
-50625x_{2}^3x_{24}^2\\
&\hspace{0.87cm}
-5\Big(2x_{9}\left(207x_{1}-116x_{4}\right)-x_{12}\left(153x_{1}-314x_{4}\right)+5x_{10}\left(356x_{3}-359x_{6}\right)+1350x_{2}x_{11}\Big)x_{21}x_{22}x_{23}\\
&\hspace{0.87cm}
+1875\Big(x_{11}\left(2x_{3}-x_{6}\right)+x_{5}\left(2x_{9}-x_{12}\right)\Big)x_{21}^2x_{22}
-375x_{2}\Big(2x_{1}\left(3x_{1}-7x_{4}\right)-x_{2}\left(3x_{3}-2x_{6}\right)+8x_{4}^2\Big)x_{23}^2\\
&\hspace{0.87cm}
+50\Big(5x_{10}\left(39x_{1}-38x_{4}\right)-3x_{2}\left(x_{9}-32x_{12}\right)\Big)x_{21}x_{23}^2
-50\Big(x_{10}\left(14x_{1}-37x_{4}\right)-3x_{2}\left(7x_{9}+x_{12}\right)\Big)x_{22}^2x_{23}\\
&\hspace{0.87cm}
+15\Big(5x_{11}\left(21x_{1}+22x_{4}\right)-8x_{3}\left(14x_{9}-43x_{12}\right)+6x_{6}\left(13x_{9}-56x_{12}\right)-350x_{5}x_{10}\Big)x_{21}^2x_{23}+R\,x_{21}^2\\
&\hspace{0.87cm}
-5\Big(20x_{11}\left(24x_{1}-7x_{4}\right)+4x_{9}\left(97x_{3}-43x_{6}\right)+x_{12}\left(94x_{3}-211x_{6}\right)-600x_{5}x_{10}\Big)x_{21}x_{22}^2+S\,x_{21}x_{22}\\
&\hspace{0.87cm}
-75\Big(2x_{10}\left(78x_{1}-29x_{4}\right)-15x_{2}\left(2x_{9}-19x_{12}\right)\Big)x_{21}x_{22}x_{24}+125x_{2}x_{10}x_{22}x_{23}^2+Tx_{22}^2+Ux_{21}x_{23}\\
&\hspace{0.87cm}
+Vx_{22}x_{23},
\end{align*}
\end{SMALL}
\hspace{-1mm}where
\begin{SMALL}
\begin{align*}
&\hspace{0.3cm}
R=5568x_{6}x_{5}\left(3x_{1}-4x_{4}\right)-18x_{3}x_{5}\left(1612x_{1}-1941x_{4}\right)+6x_{3}^2\left(1952x_{3}-4389x_{6}\right)+3x_{6}^2\left(7057x_{3}-2136x_{6}\right)
-11250x_{2}x_{5}^2\\
&\hspace{0.82cm}
+2700x_{7}\left(3x_{1}-4x_{4}\right)^2+54x_{8}\left(3x_{1}-4x_{4}\right)\left(106x_{3}-89x_{6}\right),
\\
&\hspace{0.3cm}
S=27000x_{2}x_{7}\left(3x_{1}-4x_{4}\right)-24x_{3}^2\left(658x_{1}-249x_{4}\right)+1512x_{4}x_{8}\left(11x_{1}-4x_{4}\right)+252x_{1}^2\left(83x_{5}-36x_{8}\right)
-90x_{2}x_{3}\left(329x_{5}-318x_{8}\right)\\
&\hspace{0.82cm}
-2x_{4}x_{5}\left(17073x_{1}-6047x_{4}\right)+3x_{1}x_{6}\left(8712x_{3}-3599x_{6}\right)-x_{4}x_{6}\left(11658x_{3}-6041x_{6}\right)
+90x_{2}x_{6}\left(226x_{5}-267x_{8}\right),\\
&\hspace{0.3cm}
T=20x_{1}x_{3}\left(294x_{1}-253x_{4}\right)-40x_{1}x_{6}\left(159x_{1}-152x_{4}\right)+1900x_{2}x_{3}\left(x_{3}-x_{6}\right)
+20x_{4}^2\left(68x_{3}-95x_{6}\right)-25x_{2}x_{6}\left(40x_{3}-33x_{6}\right)\\
&\hspace{0.82cm}
+60x_{1}x_{2}\left(361x_{5}-252x_{8}\right)-10x_{2}x_{4}\left(983x_{5}-756x_{8}\right)+67500x_{2}^2x_{7},\\
&\hspace{0.3cm}
U=90x_{1}x_{3}\left(98x_{1}-117x_{4}\right)-30x_{1}x_{6}\left(171x_{1}-284x_{4}\right)-150x_{2}x_{6}\left(68x_{3}-35x_{6}\right)
-30x_{2}x_{4}\left(167x_{5}+396x_{8}\right)+7050x_{2}x_{3}^2\\
&\hspace{0.82cm}
+20x_{4}^2\left(73x_{3}-157x_{6}\right)+270x_{1}x_{2}\left(41x_{5}+33x_{8}\right),\\
&\hspace{0.3cm}
V=5x_{2}x_{4}\left(1604x_{3}-611x_{6}\right)-30x_{1}^2\left(294x_{1}-563x_{4}\right)-30x_{4}^2\left(355x_{1}-86x_{4}\right)
-30x_{1}x_{2}\left(463x_{3}-242x_{6}\right)-75x_{2}^2\left(109x_{5}-198x_{8}\right).
\end{align*}
\end{SMALL}
\hspace{-1mm}A computation carried out with \texttt{Maple} shows that evaluating $P_{3}$ at an arbitrary element $[\xi_1:\xi_2:\cdots:\xi_{24}]$ of $\mathcal{O}\big(\F_{0}^{3}(\lambda)\big),$ we find $P_{3}\big([\xi_1:\xi_2:\cdots:\xi_{24}]\big)=0,$ {\it i.e.} $\mathcal{O}\big(\F_{0}^{3}(\lambda)\big)$ is contained in the zero locus of $P_{3}$
\begin{align*}
\mathrm{\text{Zeros}}(P_{3}):=
\left\{[x_1:x_2:\cdots:x_{24}]\in\mathbb{P}^{23}_{\C}\hspace{1mm}\vert\hspace{1mm}P_{3}\big([x_1:x_2:\cdots:x_{24}]\big)=0\right\},
\end{align*}
which is a \textsc{Zariski} closed subset of $\mathbb{P}^{23}_{\C}.$ Therefore we have $\overline{\mathcal{O}\big(\F_{0}^{3}(\lambda)\big)}\subset\mathrm{\text{Zeros}}(P_{3})$ for any $\lambda\in\C^*.$ Moreover, we have
\begin{align*}
P_{3}\left(0,1,0,0,\cdots,0,0,1\right)=-50625\neq0,
\end{align*}
hence $\F_{2}^{3}\not\in\mathrm{\text{Zeros}}(P_{3}).$ It follows that, for any $\lambda\in\C^*,$ we have $\F_{2}^{3}\not\in\overline{\mathcal{O}\big(\F_{0}^{3}(\lambda)\big)}$, so that $\F_{0}^{3}(\lambda)$ does not degenerate onto $\F_{2}^{3}.$ Consequently, according to Lemma~\ref{lem:Caracterisation-fermeture-Orbite-F0-lambda-degre-d}, the orbit $\mathcal{O}\big(\F_{0}^{3}(\lambda)\big)$ is closed in~$\mathbf{F}(3)$.

\noindent To show that the orbit $\mathcal{O}\big(\F_{0}^{4}(\lambda)\big),$ resp. $\mathcal{O}\big(\F_{0}^{5}(\lambda)\big),$ is closed in $\mathbf{F}(4),$ resp. $\mathbf{F}(5),$ it suffices to argue as in degree $d=3$, replacing the polynomial $P_3$ by the following polynomial $P_4,$ resp. $P_5$:
\begin{SMALL}
\begin{align*}
&
P_{4}=
\Big(3x_{3}\left(129x_{3}-212x_{6}\right)+3x_{4}\left(178x_{5}+15x_{8}\right)+12x_{1}\left(22x_{5}-3x_{8}\right)+5184x_{2}x_{7}-20x_{6}^2\Big)x_{31}
+1728x_{15}x_{31}^2\\
&\hspace{0.57cm}
-432\left(2x_{13}-x_{16}\right)x_{31}x_{32}+48\left(42x_{11}-31x_{14}\right)x_{31}x_{33}-18\left(24x_{11}-19x_{14}\right)x_{32}^2
-162x_{2}\left(4x_{1}-15x_{4}\right)x_{34}\\
&\hspace{0.57cm}
-18\Big(2x_{1}\left(27x_{3}-20x_{6}\right)-x_{4}\left(15x_{3}-x_{6}\right)+x_{2}\left(170x_{5}-69x_{8}\right)\Big)x_{32}
+4212x_{12}x_{31}x_{34}-486x_{12}x_{32}x_{33}\\
&\hspace{0.57cm}
+36\Big(3\left(x_{1}-x_{4}\right)\left(12x_{1}-x_{4}\right)+22x_{2}\left(3x_{3}-2x_{6}\right)\Big)x_{33}-10368x_{2}^2x_{35},
\\
\\
\text{\normalsize resp.}\hspace{1mm}&
P_{5}=
\Big(50x_{7}\left(4906x_{1}-4749x_{4}\right)-27040x_{10}\left(5x_{1}-6x_{4}\right)-5x_{5}\left(10596x_{3}-13469x_{6}\right)+20x_{8}\left(1019x_{3}-2028x_{6}\right)\\
&\hspace{0.57cm}
+569100x_{2}x_{9}\Big)x_{43}+142275x_{19}x_{43}^2-11690x_{17}x_{43}x_{44}+98140x_{14}x_{43}x_{47}-140x_{2}\left(2180x_{1}-1691x_{4}\right)x_{47}\\
&\hspace{0.57cm}
+35\left(1564x_{13}-1645x_{16}\right)x_{43}x_{46}+\Big(8620x_{8}\left(2x_{1}-x_{4}\right)-50x_{5}\left(141x_{1}-11x_{4}\right)+10x_{3}\left(513x_{3}-1580x_{6}\right)\\
&\hspace{0.57cm}
+70x_{2}\left(2779x_{7}-2704x_{10}\right)+9875x_{6}^2\Big)x_{44}
-35\Big(\left(x_{1}-x_{4}\right)\left(295x_{1}+683x_{4}\right)-x_{2}\left(3776x_{3}-4427x_{6}\right)\Big)x_{46}\\
&\hspace{0.57cm}
+70\left(323x_{18}-253x_{15}\right)x_{43}x_{45}+7\left(686x_{13}-293x_{16}\right)x_{44}x_{45}-2975x_{15}x_{44}^2-15946x_{14}x_{45}^2-1422750x_{2}^2x_{48}\\
&\hspace{0.57cm}
+\Big(14x_{3}\left(15x_{1}+1124x_{4}\right)-14x_{6}\left(10x_{1}+1129x_{4}\right)-595x_{2}\left(221x_{5}-250x_{8}\right)\Big)x_{45}+49210x_{14}x_{44}x_{46}.
\end{align*}
\end{SMALL}
\end{proof}

\vspace{2.2cm}

\noindent For $d\geq6,$ we propose:
\begin{conj}\label{conj:1}
{\sl
Let $d$ be an integer greater than or equal to $6$ and $\lambda$ a nonzero complex number. A homogeneous coordinate system $[x_1:x_2:\cdots:x_{d^2+4d+3}]$ being fixed in $\mathbb{P}^{\hspace{0.2mm}d^2+4d+2}_{\C},$ there exists a homogeneous polynomial $Q_d\in\C[x_1,x_2,\cdots,x_{d^2+4d+3}]$ of degree $3$, not depending on $\lambda,$ which vanishes on the orbit $\mathcal{O}\big(\F_{0}^{d}(\lambda)\big)$ and does not vanish at the point~$\F_{2}^{d}=[0:1:0:0:\cdots:0:0:1\big].$
}
\end{conj}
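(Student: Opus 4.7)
The plan is to adapt the linear-algebra argument used in the proof of Proposition~\ref{pro:Orbite-F0-lambda-degre-3-4-5-fermee} to produce the explicit polynomials $P_4$ and $P_5$. First I would parameterize the union $V_d := \bigcup_{\lambda \in \C^*}\mathcal{O}(\F_0^d(\lambda))$ by the map $(\varphi,\lambda)\mapsto [\varphi^*\Omega_0^d(\lambda)]$; under the identification of $\mathbf{F}(d)$ with a \textsc{Zariski} open subset of $\mathbb{P}^{d^2+4d+2}_{\C}$ used in that proof, the image has coordinates $\xi_i(a,\lambda)$ which are polynomial in the nine entries $a=(a_1,\ldots,a_9)$ of a lift of $\varphi$ (homogeneous of degree $d+1$) and linear in $\lambda$. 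The conjecture amounts to finding a nonzero homogeneous $Q_d\in\C[x_1,\ldots,x_{d^2+4d+3}]_3$ such that
\begin{equation*}
Q_d\bigl(\xi_1(a,\lambda),\ldots,\xi_{d^2+4d+3}(a,\lambda)\bigr)\equiv 0 \quad\text{in }\C[a,\lambda],
\end{equation*}
together with the non-vanishing $Q_d(\F_2^d)\neq 0$ at $\F_2^d=[0:1:0:\cdots:0:1]$.

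Writing $Q_d$ with undetermined coefficients, the vanishing identity above expands into a finite homogeneous linear system on the $\binom{d^2+4d+5}{3}$ coefficients of $Q_d$, whose equations are indexed by monomials in $(a,\lambda)$ of degree $3(d+1)$ in $a$ and of degree at most $3$ in $\lambda$. The first concrete step is to solve this system in a computer algebra system for a few low values $d\in\{6,7,8,\ldots\}$, following the recipe used for $P_4$ and $P_5$, and check that the resulting kernel contains an element $Q_d$ not vanishing at the specific point $\F_2^d$. The explicit form of $Q_d$ obtained in the low cases should suggest a uniform description of $Q_d$ valid for all $d\ge 6$.

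The main obstacle, and what makes the result only a conjecture, is producing such a uniform closed-form $Q_d$. A promising geometric direction is to exploit the fact that every $\F\in V_d$ possesses a singularity of maximal algebraic multiplicity $d$ together with a distinct non-degenerate singularity lying on the same invariant line (namely $z=0$ in the base case $\F_0^d(\lambda)$), whereas $\F_2^d$ has only a single singularity. Encoding this dichotomy as a $3\times 3$ determinantal expression cubic in the coefficients of $\Omega$, built from its partial derivatives of low order and the radial vector field, would yield a natural candidate for $Q_d$; the analysis would then reduce to checking that this expression vanishes identically on $V_d$ (uniformly in $\lambda$) and evaluates non-trivially at $\F_2^d$. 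By Lemma~\ref{lem:Caracterisation-fermeture-Orbite-F0-lambda-degre-d}, the existence of such a $Q_d$ would imply that $\F_0^d(\lambda)$ does not degenerate onto $\F_2^d$, hence that $\mathcal{O}(\F_0^d(\lambda))$ is closed in $\mathbf{F}(d)$ for all $d\geq 6$ and $\lambda\in\C^*$.
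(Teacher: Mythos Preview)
The statement is a \emph{conjecture}; the paper does not prove it. What the paper provides after the conjecture is computational evidence: the first author verified with \texttt{Maple} that the required cubic $Q_d$ exists for all $d\leq 30$, and moreover records an explicit structural ansatz for $Q_d$ (a sum of three groups of monomials involving a $(d+1)\times(d+1)$ matrix $M$ with a prescribed sparsity pattern) that succeeds in all those cases. So there is no proof to compare your proposal against.

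Your plan is consistent with what the paper does. The first part --- parameterize $\bigcup_{\lambda}\mathcal{O}(\F_0^d(\lambda))$ by $(\varphi,\lambda)$, expand the vanishing condition as a linear system on the $\binom{d^2+4d+5}{3}$ coefficients of $Q_d$, and solve for small $d$ --- is exactly the computation the paper reports. The paper goes a step further than your outline by exhibiting a concrete uniform shape for $Q_d$ that works empirically; this is precisely the ``uniform description'' you hope will emerge from the low-degree computations, so it would be worth comparing your experiments against that ansatz.

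Your second paragraph, proposing to build $Q_d$ as a $3\times 3$ determinantal invariant encoding the singularity dichotomy (one singularity of multiplicity $d$ plus one non-degenerate singularity on a common invariant line for $\F_0^d(\lambda)$, versus a single singularity for $\F_2^d$), is a genuine idea not present in the paper. It is plausible in spirit but, as written, not yet an argument: you would need to specify the determinant concretely, explain why it is cubic in the $\xi_i$ (and not merely in some auxiliary quantities), and justify why the geometric feature you describe forces its identical vanishing on the whole orbit uniformly in $\lambda$. Absent that, the proposal remains a sketch of a strategy rather than a proof, which is appropriate given that the statement is open.
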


\noindent Computations made with \texttt{Maple} by the first author show the validity of this conjecture for $ d $ small ($ d \leq30 $) by taking the polynomial $ Q_d $ in the following form:
\begin{Small}
\begin{align*}
&
Q_d=x_{d^2+3d+3}\left(\sum\limits_{i=1}^{d-1}\alpha_{i}\,x_{2d+2i+1}\,x_{d^2+4d+2-i}+\sum\limits_{i=0}^{4}\beta_{i}\,x_{2d+2i+4}\,x_{d^2+4d+2-i}\right)
+(x_1\hspace{2mm} x_2\hspace{2mm} \cdots\hspace{2mm}x_{d+1})M\left(\begin{array}{c}x_{d^2+4d+3}\\x_{d^2+4d+2}\\\vdots \\x_{d^2+3d+3}\end{array}\right)\\
&\hspace{0.72cm}
+x_{d^2+3d+4}\left(\delta_{0}\,x_{2d+4}\,x_{d^2+4d+1}+\delta_{1}\,x_{2d+6}\,x_{d^2+4d}+\sum\limits_{i=1}^{d-3}\gamma_{i}\,x_{2d+2i+1}\,x_{d^2+4d+1-i}\right),
\end{align*}
\end{Small}
\hspace{-1mm}where $M=\left(\begin{array}{c}L_{1}\\L_{2}\\\vdots\\L_{d+1}\end{array}\right)$ is a square matrix of order $ d + 1 $ whose lines are of the form:
\begin{SMALL}
\begin{align*}
&
L_{1}=\left[
0
\hspace{2mm}
0
\hspace{2mm}
a_{1,3}x_{1}+b_{1,3}x_{4}
\hspace{2mm}
a_{1,4}x_{3}+b_{1,4}x_{6}
\hspace{2mm}
a_{1,5}x_{5}+b_{1,5}x_{8}
\hspace{2mm}
\cdots
\hspace{2mm}
a_{1,\,d+1}x_{2d-3}+b_{1,\,d+1}x_{2d}
\right]
\\
&
L_{2}=\left[
b_{2,1}x_{2}
\hspace{2mm}
a_{2,2}x_{1}+b_{2,2}x_{4}
\hspace{2mm}
a_{2,3}x_{3}+b_{2,3}x_{6}
\hspace{2mm}
a_{2,4}x_{5}+b_{2,4}x_{8}
\hspace{2mm}
a_{2,5}x_{7}+b_{2,5}x_{10}
\hspace{2mm}
\cdots
\hspace{2mm}
a_{2,\,d+1}x_{2d-1}+b_{2,\,d+1}x_{2d+2}
\right]
\\
&
\vdots
\\
&
L_{2k-1}=\left[
\underbrace{0\hspace{2mm}0\hspace{2mm}\cdots\hspace{2mm}0}_{\min(2k,\,d+1)}
\hspace{2mm}
a_{2k-1,\,2k+1}x_{2k-1}+b_{2k-1,\,2k+1}x_{2k+2}
\hspace{2mm}
a_{2k-1,\,2k+2}x_{2k+1}+b_{2k-1,\,2k+2}x_{2k+4}
\hspace{2mm}
\cdots
\hspace{2mm}
a_{2k-1,\,d+1}x_{2d-2k-1}+b_{2k-1,\,d+1}x_{2d-2k+2}
\right]
\\
&
L_{2k}=\left[
\underbrace{0\hspace{2mm}0\hspace{2mm}\cdots\hspace{2mm}0}_{2k-2}
\hspace{2mm}
b_{2k,\,2k-1}x_{2k}
\hspace{2mm}
a_{2k,\,2k}x_{2k-1}+b_{2k,\,2k}x_{2k+2}
\hspace{2mm}
a_{2k,\,2k+1}x_{2k+1}+b_{2k,\,2k+1}x_{2k+4}
\hspace{2mm}
\cdots
\hspace{2mm}
a_{2k,\,d+1}x_{2d-2k+1}+b_{2k,\,d+1}x_{2d-2k+4}
\right],
\end{align*}
\end{SMALL}
\hspace{-1mm}where $\alpha_i,\beta_i,\gamma_i,\delta_i,a_{i,j},b_{i,j}\in\C$ with $b_{2,1}\neq0.$
\vspace{3mm}

\noindent It is clear that Conjecture~\ref{conj:1} and Lemma~\ref{lem:Caracterisation-fermeture-Orbite-F0-lambda-degre-d} imply the following conjecture.
\begin{conj}\label{conj:2}
{\sl
For any integer $d\geq6$ and any $\lambda\in\C^*,$ the orbit $\mathcal{O}\big(\F_{0}^{d}(\lambda)\big)$ is closed in $\mathbf{F}(d).$
}
\end{conj}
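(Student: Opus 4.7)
The plan is to reduce Conjecture~\ref{conj:2} to the polynomial existence statement of Conjecture~\ref{conj:1}, and then to propose a strategy for producing such a polynomial uniformly in $d$. By Lemma~\ref{lem:Caracterisation-fermeture-Orbite-F0-lambda-degre-d}, the orbit $\mathcal{O}(\F_0^d(\lambda))$ is closed in $\mathbf{F}(d)$ if and only if $\F_0^d(\lambda)$ does not degenerate onto $\F_2^d$, i.e.\ $\F_2^d\notin\overline{\mathcal{O}(\F_0^d(\lambda))}$. It therefore suffices, for each integer $d\geq 6$, to exhibit a homogeneous polynomial $Q_d\in\C[x_1,\ldots,x_{d^2+4d+3}]$ of degree~$3$ which vanishes identically on $\mathcal{O}(\F_0^d(\lambda))$, and hence on its \textsc{Zariski} closure, while $Q_d(\F_2^d)\neq 0$ at the distinguished point $[0:1:0:\cdots:0:1]$.

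To search for $Q_d$, the natural approach is linear algebra on the orbit. Writing a generic $\varphi\in\mathrm{Aut}(\pp)$ as $\varphi=[\sum a_{1j}x_j:\sum a_{2j}x_j:\sum a_{3j}x_j]$, pull back $\Omega_0^d(\lambda)$ and expand $\varphi^*\Omega_0^d(\lambda)=A\alpha+B\beta+C\gamma$ with $\alpha=y\mathrm{d}z-z\mathrm{d}y$, $\beta=z\mathrm{d}x-x\mathrm{d}z$, $\gamma=x\mathrm{d}y-y\mathrm{d}x$. Its homogeneous coordinates $\xi_i(a_{jk},\lambda)$ are explicit polynomials in the $9$ matrix entries and in $\lambda$. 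Substituting them into a degree-$3$ ansatz
\[
Q_d=\sum_{i\leq j\leq k}c_{ijk}\,x_ix_jx_k,
\]
the identity $Q_d(\xi_1,\ldots,\xi_{d^2+4d+3})\equiv 0$ in $\C[a_{jk},\lambda]$ amounts to a finite linear system for the unknowns $c_{ijk}$. The existence of a solution satisfying in addition $Q_d([0\!:\!1\!:\!0\!:\!\cdots\!:\!0\!:\!1])\neq 0$ would prove the conjecture.

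To obtain $Q_d$ uniformly in $d$, I would exploit the sparsity pattern revealed in low degrees by the computer algebra experiments of the first author. Since $\F_0^d(\lambda)$ has only four nonzero homogeneous coordinates, concentrated in the positions $2d+1$, $d^2+3d+1$, $d^2+3d+2$, $d^2+3d+3$, its orbit is a $7$-dimensional variety parametrised by $\mathrm{PGL}_3(\C)/\mathrm{Iso}(\F_0^d(\lambda))$. I would stratify $\mathrm{PGL}_3(\C)$ by the Bruhat decomposition with respect to the flag $\{z=0\}\supset\{y=z=0\}$ fixed by the isotropy subgroup, and seek $Q_d$ of the explicit triangular form proposed in the excerpt, namely a matrix $M=(L_i)$ of size $d+1$ with the displayed sparsity pattern plus correction terms indexed by a few specific monomials. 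The constraints produced by each stratum then couple in a triangular way to the coefficients $a_{ij}, b_{ij}, \alpha_i, \beta_i, \gamma_i, \delta_i$, and can be solved row by row.

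The main obstacle, and the reason the statement remains a conjecture, is to control rigorously the rank of the associated linear system for all $d\geq 6$: one must show that the sparse triangular ansatz always provides enough free parameters to solve the constraints imposed by the full $8$-dimensional $\mathrm{PGL}_3(\C)$-action, without being forced to vanish at the point $\F_2^d$. Equivalently, one must prove that the projection of $\overline{\mathcal{O}(\F_0^d(\lambda))}$ onto the $3$-dimensional subspace carrying the nonzero coordinates of $\F_2^d$ avoids that point. This is a combinatorial statement about the explicit polynomial maps $\xi_i(a_{jk},\lambda)$ whose complexity grows with $d$, and its uniform verification seems to require either a clever invariant-theoretic interpretation of $Q_d$ (perhaps as a resultant or discriminant attached to the singular scheme of generic foliations in $\mathcal{O}(\F_0^d(\lambda))$) or a dimensional induction on $d$ exploiting the nested family $(\F_0^d(\lambda))_{d\geq 2}$.
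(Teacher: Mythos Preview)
Your proposal is not a proof, and you are right to say so: the statement is labeled a \emph{conjecture} in the paper and is not proved there either. The paper's entire argument for Conjecture~\ref{conj:2} is the one-line observation that it follows from Conjecture~\ref{conj:1} via Lemma~\ref{lem:Caracterisation-fermeture-Orbite-F0-lambda-degre-d}, together with \texttt{Maple} verifications of Conjecture~\ref{conj:1} for $d\leq 30$ using a specific sparse cubic ansatz for $Q_d$. Your reduction is exactly this one, and your proposed linear-algebra search for $Q_d$ with the triangular/sparse pattern is precisely the computational strategy the paper reports. So at the level of approach there is nothing to compare: you and the paper agree, and you correctly identify the genuine obstruction (uniform control in $d$ of the rank of the resulting linear system, or an invariant-theoretic interpretation of $Q_d$) as the missing ingredient.

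One minor slip: the nonzero homogeneous coordinates of $\F_0^d(\lambda)$ are at positions $2d+1$, $d^2+3d-3$ and $d^2+3d$ (corresponding to the monomials $y^d$ and $xz^{d-1}$ in $A$ and $yz^{d-1}$ in $B$), not at $2d+1$, $d^2+3d+1$, $d^2+3d+2$, $d^2+3d+3$ as you wrote. This does not affect your strategy, but it would matter if you tried to carry out the computation.
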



\end{document}